\documentclass[11pt,a4paper]{article}

\usepackage{amsmath}
\usepackage{amssymb}
\usepackage{amsfonts}
\usepackage{amsthm}
\usepackage[T1]{fontenc}
\usepackage[utf8]{inputenc}

\newcommand{\func}[1]{\operatorname{#1}}

\newtheorem{theorem}{Theorem}[section]
\newtheorem{proposition}[theorem]{Proposition}
\newtheorem{lemma}[theorem]{Lemma}
\newtheorem{corollary}[theorem]{Corollary}
\newtheorem{definition}[theorem]{Definition}

\begin{document}

\author{Javier P\'{e}rez \'{A}lvarez \\
Dpto. Matem\'{a}ticas Fundamentales UNED\\
Juan del Rosal 10 28040 Madrid\\
jperez@mat.uned.es}
\date{}
\title{Some geometric perspectives on Contact Hamiltonian Dynamics}
\maketitle

\begin{abstract}
This article presents a unified overview of contact Hamiltonian geometry as
a natural framework for the description of dissipative and non-conservative
systems. Starting from the symplectic cover of a contact manifold, we
clarify the structural relation between contact and symplectic dynamics and
show how dissipation is geometrically encoded through the contact structure
and the Reeb vector field. Following the introduction, which provides a
guided overview of the subject through key references, a dedicated section
illustrates the scope of the theory through applications ranging from
thermodynamics, statistical mechanics, and integrable and KAM systems to
field theories, quantum and Lie systems, optimal control, control theory,
and economic models, where dissipation, constraints, and optimization play a
central role. The subsequent sections review and adapt classical
constructions of geometric mechanics, such as integrability,
Hamilton--Jacobi theory, symmetries, and reduction, to the contact setting.
Particular emphasis is placed on recent developments in contact reduction,
Dirac structures, and constrained systems. The article also surveys emerging
approaches to the geometric quantization of contact manifolds and discusses
how ideas from generalized geometry provide a unifying perspective for
symplectic, contact, and related frameworks.

\textbf{Keywords.} Contact geometry; symplectic cover; dissipative
Hamiltonian systems; Hamilton--Jacobi theory; symmetries and dissipation;
reduction theory; geometric quantization; generalized geometry.
\end{abstract}

\section{Introduction via key references}

Contact geometry is much more than the odd-dimensional sibling of symplectic
geometry. Soon after its emergence, it became evident that contact geometry
exhibits a high degree of topological flexibility, particularly with the
advent of contact surgery. See Eliashberg \cite{E} for foundational results
on the classification of overtwisted contact structures in dimension three
(characterized by embedded overtwisted disks with Legendrian boundary, where
the contact plane field exhibits flexible behavior), and Weinstein \cite{W}
for the development of contact surgery techniques within symplectic
handlebodies. By contrast, the well-known rigidity of symplectic structures
presents a geometric shield difficult to deform; see, e.g., Gompf \cite{G}.

The foundations of contact geometry can already be found in classical
treatises, such as Godbillon work \cite{God}, which provides an organized
and accessible account of fundamental geometric structures of great
significance in mechanics. The monograph by Libermann and Marle \cite{LM} is
a cornerstone reference that provides a systematic exposition of symplectic
and contact geometry and their deep connections with analytical mechanics,
with great clarity of exposition. For a modern, concise, and accessible
presentation of the fundamental concepts and techniques of contact geometry,
together with its elder sister, symplectic geometry, we refer to the book by
Eslami Rad \cite{ES}, which also highlights their applications. In addition,
Banyaga and Houenou \cite{BH} provide a clear and comprehensive introduction
to symplectic and contact manifolds. One should also mention the seminal
work by Blair \cite{B}, which investigates the intricate interplay between
Riemannian geometry and contact structures, covering a wide range of topics
such as contact structures on tangent sphere bundles, curvature properties
of contact metric manifolds, Boothby--Wang fibrations, as well as complex
contact and complex contact metric manifolds, among others.

Recent advances in the topology of contact geometry have revealed deep
connections with both geometric and differential topology. In this sense,
the seminal work of Geiges \cite{Ge} develops contact topology from three to
higher dimensions, with a particular emphasis on contact surgery and other
fundamental constructions of contact manifolds. For the symplectic
background, we direct the reader to McDuff and Salamon \cite{MS}, which
includes discussions of transverse contact structures where relevant.

Symplectic geometry not only provides the natural geometric framework for
Hamiltonian dynamics, but also plays a fundamental role in the transition to
quantum theory through a variety of geometric and algebraic quantization
schemes. Accordingly, we highlight key references in this area, which
exemplify the continuing effort to geometrize one of the fundamental engines
of modern science. In this context, building on the idea that dynamics
shapes geometrical structures, Cari\~{n}ena \textit{et al.} \cite{CIGG}
provide a detailed exposition of the interplay between geometric structures
and dynamics in both classical and quantum systems. Their work covers
essential aspects of classical dynamics, including Jacobi, Poisson,
symplectic, Hamiltonian, and Lagrangian structures, as well as the geometric
framework underlying quantum dynamics, particularly the geometry of
Hermitian spaces, the K\"{a}hler structure of the space of pure quantum
states, and geometric descriptions of quantum evolutions and state spaces.
Another key reference is de Gosson \cite{dG}, which, intended for readers
with a solid background in quantum theory, explores symplectic geometric
techniques employed in quantization. Key topics addressed include the Maslov
index, the Heisenberg group, Weyl calculus, the metaplectic representation,
Wigner functions, and the symplectic treatment of Lagrangian submanifolds of 
$\mathbb{R}^{2n}$ within the Weyl--Wigner formalism, where quantum objects
are associated to classical phase-space structures.

However, this extensive framework primarily describes isolated systems and
does not account for interactions with an environment, which give rise to
dissipative dynamics and irreversibility. Developing models for dissipative
systems is therefore essential for a comprehensive understanding of physical
phenomena. The challenge lies in the application of the Lagrangian and
Hamiltonian formalisms to processes involving energy loss or transfer due to
friction, drag, heat dissipation, or other environmental effects. A natural
approach is to represent dissipative mechanical systems within the framework
of contact Hamiltonian systems, which encode energy dissipation at the
geometric level. In this context, contact geometry has emerged as a powerful
framework for the study of dissipative mechanical systems, with a
significance expected to rival that of symplectic geometry in classical
mechanics. Seminal contributions in this area can be found in \cite%
{Brr,Br,BLN,BT,LS,GGMRRx}.

Building on this premise, contact geometry provides an appropriate formalism
for incorporating dissipation into mechanical systems, offering a rich
geometric structure that yields deep insight into their dynamics. Its
fundamental elements, such as the contact form and the associated Reeb
vector field, provide essential tools for studying non-conservative forces
and energy-dissipative processes. A solid foundation for these aspects can
be found in \cite{BCT,dLL}. For a comprehensive treatment of these
foundational ideas, see also \cite{LL,Pe,SLLD}.

Perhaps surprisingly, contact Hamiltonian dynamics, as well as conserved and
dissipative quantities, admit their own variational principles, which are
crucial for understanding the properties and dynamics of these systems
beyond the conservative setting. Relevant references on this topic can be
found in \cite{LTW,VBS}.

\section{Applications and theoretical advancements in Contact Hamiltonian
Dynamics}

This section highlights the broad applications of contact geometric dynamics
in modeling dissipative and non-conservative systems. Its influence extends
across traditional domains such as thermodynamics, integrable systems,
quantum mechanics, field theories, Lie systems, statistical mechanics, and
optimal control, and even into interdisciplinary fields including economics
and neuroscience. We provide a concise overview of these contributions,
drawing attention to representative references. In the subsequent sections,
the discussion turns to theoretical advances, where recent developments have
extended classical constructions, such as integrability, Hamilton--Jacobi
theory, reduction procedures, Dirac structures, Geometric Quantization, and
Hitchin Generalized Geometry, into the contact framework.

Rajeev \cite{Raj} argues that the theory of differential forms, and in
particular contact geometry, naturally arises in classical physics, in part
as a response to the need for a geometric understanding of thermodynamics.
This is exemplified by the fundamental relation between infinitesimal
changes in the internal energy $U,$ the volume $V$ and the entropy $S$ of a
material subjected to pressure $P$ and temperature $T,$%
\begin{equation}
dU+PdV-TdS=0.  \tag{1}
\end{equation}%
This perspective has, in turn, enabled the extension of contact geometric
principles beyond the realm of traditional mechanics.\medskip

\noindent \textbullet\ One area that has benefited significantly from the
application of contact geometry, unsurprisingly given its thermodynamic
origins is \textit{Thermodynamics}, particularly in the study of
nonequilibrium energy conversion processes. The fundamental relation (1)
marks one of the earliest appearances of contact Hamiltonian dynamics in
physical applications. More precisely, this condition defines a Legendre
submanifold $L$ of a contact manifold $(\mathcal{T},\eta )$, on which a
suitable contact Hamiltonian $H$, vanishing along $L$, can be introduced.
This construction induces a natural flow that preserves the equilibrium
manifold $L$ through geometric Hamilton--Jacobi theory. Within this
geometric setting, contact geometry provides a powerful framework for
analyzing the interplay between energy conservation and entropy production
in thermodynamic systems and their transformations.

For a more comprehensive perspective, the literature provides a broad
spectrum of relevant contributions. Among them, the following may be
regarded as key starting points: Bravetti, L\'{o}pez-Monsalvo, and Nettel 
\cite{BLN}, who introduce contact Hamiltonian dynamics as a natural
framework for modeling thermodynamic processes and describe the relationship
between the Hamiltonian function and irreversible entropy production; and
Grmela \cite{Gr}, who employs contact geometry in the context of mesoscopic
thermodynamics, systems in which thermal fluctuations are relevant but which
lie beyond the scope of classical thermodynamics, thereby opening new
theoretical directions.

Additional key contributions include the work of Goto \cite{Go}, who
develops the use of Legendre submanifolds in contact manifolds as a
geometric representation of equilibrium thermodynamics and shows how
transitions from nonequilibrium states toward equilibrium can be described
by a class of contact Hamiltonian vector fields; and Mrugala \cite{Mr}, who
analyzes the approach to thermodynamic equilibrium through contact
transformations. Further developments along these lines can be found in the
works of Mrugala \cite{Mr2} and Eberard, Maschke, and van der Schaft \cite%
{EMS}, which build upon and extend the Hamiltonian formalism, offering new
perspectives on the study of complex and irreversible systems. A
comprehensive overview is provided by Bravetti \cite{Br}, who reviews the
geometric formulation of equilibrium thermodynamics within the framework of
contact geometry, and explores the associated metric structures arising from
thermodynamic fluctuation theory.

Finally, Rajeev \cite{Raj} suggests that contact geometry may establish
novel connections between thermodynamics and quantum theory. In particular,
he develops a quantization scheme for contact manifolds that may lead toward
a quantum theory of thermodynamics, thereby bridging classical and quantum
descriptions of thermodynamic systems. In this light, Einstein's celebrated
tribute to thermodynamics, emphasizing the simplicity and universality of
its principles and quoted by Rajeev at the beginning of his paper, resonates
strongly amid the emergence of these geometric structures.\medskip

\noindent \textbullet\ The theory of \textit{Integrable Systems}, understood
as the study of dynamical systems possessing a rich algebraic and geometric
structure that allows for exact solvability, plays a central role in the
analysis of nonlinear dynamics in classical and quantum physics. A
comprehensive introduction to this subject is given in the monograph by
Babelon, Bernard, and Talon \cite{BBT}, which covers a broad spectrum of
topics, including Calogero--Moser models, the KP hierarchy, the KdV and
Gelfand--Dickey hierarchies, Toda field theories, the sine--Gordon model,
the Riemann--Hilbert problem, tau- and theta-functions, and Painlev\'{e}
equations.

Within the framework of geometric mechanics, the deep relationship between
integrability and underlying geometric structures is presented in the now
classical work of Perelomov \cite{Perelomov}, which emphasizes the central
role of Lie algebras in classical integrable systems, symplectic geometry,
and Hamiltonian reduction. Further developments along these lines, with
particular relevance to symplectic geometry, and with implications for later
developments in contact geometry, are presented by Reyman and
Semenov-Tian-Shansky \cite{Reyman-Semenov}. Their work develops a
group-theoretical approach to integrability, emphasizing the interplay
between Lie groups, symplectic reduction, and the theory of classical $R-$%
operators, which provide a systematic framework for the construction of
integrable systems via Hamiltonian reduction. Both approaches reveal how
integrability emerges from symmetry and reduction principles encoded in
Lie-theoretic and symplectic structures.

Building upon the deep interplay between integrability and geometric
structures, recent advances have extended the role of contact geometry in
the construction of higher-dimensional integrable systems. A seminal
contribution in this direction is the work of Sergyeyev \cite{Se}, which
introduces a novel class of $(3+1)-$dimensional integrable systems. By
employing contact vector fields and algebraic Lax pairs, Sergyeyev develops
a systematic method for associating integrable systems with pairs of
rational functions, thereby extending the scope of integrability theory
beyond the traditional symplectic and Poisson frameworks.

The geometric approach to the complete integrability of a Hamiltonian system
with $n$ degrees of freedom and Hamiltonian $H(q^{i},p_{j})$ defined on a
symplectic manifold $M$ is classically characterized by the
Arnold--Liouville theorem. This theorem assumes the existence of $n$
independent integrals of motion $F_{1}=H,$ $F_{2}(q,p),...,F_{n}(q,p)$ in
involution, $\left[ {F_{i},F_{j}}\right] =0$, and guarantees the existence
of action--angle coordinates, providing a detailed description of the system
dynamics and long-term behavior. In this case, $M$ admits a Lagrangian
foliation, and the solutions of the Hamiltonian dynamics evolve on the
leaves of this foliation.

Introduced by A. S. Mishchenko and A. T. Fomenko \cite{MF}, the concept of
non-commutative integrability extends the classical Liouville--Arnold notion
to a broader framework. A Hamiltonian system on a symplectic manifold $M$ of
dimension $2n$ is said to be non-commutatively integrable if there exists a
finite-dimensional Lie algebra F of integrals of motion that is non-abelian
under the Poisson bracket and satisfies $\mathrm{dim}F+\mathrm{rank}F=2n$.
This generalization provides a richer setting for the study of integrable
systems, encompassing situations with non-abelian symmetries or singular
invariant foliations.

Within this context, Jovanovic \cite{J} investigates non-commutative
integrability in the framework of contact geometry, introducing methods for
the construction of action--angle variables and the analysis of the
corresponding dynamics. See also \cite{Vi} and \cite{JJ}, where partial
versions of the Arnold--Liouville theorem are established for Hamiltonian
systems restricted to hypersurfaces of contact type. In this setting,
integrability is not required on the entire phase space; instead, the
invariant hypersurface is foliated by invariant tori arising from the
contact integrable structure.\medskip

\noindent \textbullet\ The \textit{Kolmogorov--Arnold--Moser (KAM) Theory},
a cornerstone of Hamiltonian dynamics, asserts that under certain conditions
small perturbations of an integrable Hamiltonian system preserve most
quasi-periodic solutions, thereby ensuring the overall stability of the
system. Building on classical variational principles, the article by Wang,
Wang, and Yan \cite{WWJ} introduces an implicit variational principle for
contact Hamiltonian systems, extending the Herglotz variational principle to
this setting. Within this framework, the authors adapt and extend key ideas
from weak KAM theory, originally developed for symplectic systems, which
focuses on viscosity solutions of the Hamilton--Jacobi equation and captures
invariant structures even in the absence of smooth solutions. This work
characterizes action-minimizing trajectories in dissipative systems and
provides a rigorous mathematical framework to describe integrability in
non-conservative settings. Despite this variational antecedent, a KAM theory
in the contact setting is still in its early stages of development.
Nevertheless, several closely related developments already exist. For
instance, KAM theory has been extended to non-conservative systems in \cite%
{CCdLl}, while a KAM theory for conformally symplectic systems has been
developed in \cite{CdLl}.\medskip

\noindent \textbullet\ \textit{Field theories}, traditionally formulated
within the framework of classical Hamiltonian and Lagrangian mechanics, do
not intrinsically account for dissipative effects. Addressing this
limitation, the article by Gaset \textit{et al.} \cite{GGMR} introduces a
contact-geometric framework to describe the geometric structure of
dissipative field theories. The authors define the notions of $k-$contact
structures and $k-$contact Hamiltonian systems, which generalize both
contact Hamiltonian systems in mechanics and $k-$symplectic Hamiltonian
systems in field theory. This approach allows for a natural incorporation of
dissipation into the equations of motion, balance laws, and symmetries,
providing a powerful tool for the description of field theories in which
energy is not conserved. Building on this geometric foundation, the same
authors further develop in \cite{GGMRb} a $k-$contact Lagrangian formulation
for nonconservative field theories. This formulation extends the variational
principles of contact mechanics to the field-theoretic setting, yielding $k-$%
contact Euler--Lagrange equations that generalize the classical equations by
incorporating dissipative terms. The authors also analyze symmetries and
dissipation laws in this context, showing how these symmetries are
associated with quantities that evolve due to dissipation, in close analogy
with conserved quantities in conservative systems. Together, these works
provide a unified geometric language for the formulation and analysis of
dissipative field theories. \medskip

\noindent \textbullet\ As already introduced in Section 1, the work by de
Gosson \cite{dG} provides a comprehensive perspective on the connections
between symplectic geometry and \textit{Quantum Mechanics}, emphasizing how
geometric structures can underpin the formulation of quantum theory and
highlighting the historical and conceptual links between classical
Hamiltonian dynamics and quantum systems. It provides an excellent example
of how geometric structures can be used to describe physical phenomena
across different scales.

The study of dissipation in quantum dynamics involves more complex
interactions than in classical mechanics, requiring advanced geometric tools
to model its effects accurately. The work by Ciaglia, Cruz, and Marmo \cite%
{CCM} demonstrates how contact geometry can be applied to describe
dissipative phenomena at the quantum level, providing a geometric framework
for the evolution of open systems described by the
Gorini--Kossakowski--Lindblad--Sudarshan (GKLS) equation. The authors show
how the GKLS dynamical generator can be decomposed into Hamiltonian and
dissipative components, using contact geometry to obtain a unified
description of reversible and irreversible dynamics. This approach expands
the scope of contact geometry by offering a geometric understanding of the
evolution of open and dissipative quantum systems.

\medskip

\noindent \textbullet\ The study of \textit{Lie Systems} provides a bridge
between classical Lie theory and the analysis of nonlinear differential
equations, offering a geometric framework for understanding systems whose
solutions can be reconstructed from a finite set of particular solutions.
Introduced as a generalization of linear superposition principles, Lie
systems arise when the dynamics of a first-order differential equation are
governed by a time-dependent vector field taking values in a
finite-dimensional Lie algebra of vector fields. This structure applies
powerful algebraic techniques to solve such equations and shows deep
connections with symmetries, integrability, and reduction methods in both
classical and quantum mechanics. Recent advances, as outlined by Cari\~{n}%
ena and de Lucas \cite{CL}, have expanded the theory to include stratified
Lie systems and $k-$symplectic generalizations, thereby extending its range
of applications to areas such as control theory, thermodynamics, and the
geometric formulation of open quantum systems. Lie systems provide a
framework for addressing complex dynamical problems in settings where
traditional methods prove insufficient.

Within this framework, concrete applications of $k-$symplectic geometric
methods to the analysis of superposition rules and general properties of Lie
systems can be found in \cite{LV}. In this work, de Lucas and Vilari\~{n}o
develop novel $k-$symplectic geometric techniques to investigate
superposition rules, time-independent constants of motion, and other
fundamental properties of these systems. Their approach extends the scope of 
$k-$symplectic geometry and demonstrates its effectiveness in capturing the
intricate dynamics of systems for which traditional symplectic methods prove
insufficient. The authors provide a powerful geometric framework for
understanding the behavior of complex dynamical systems, ranging from
classical mechanics to field-theoretic models.

In a further extension of Lie systems, contact dynamics are incorporated
through the notion of contact Lie systems, as introduced in \cite{LR}. In
this work, de Lucas and Rivas define and analyze contact Lie systems as
systems of first-order differential equations whose integral curves are
generated by time-dependent vector fields taking values in a
finite-dimensional Lie algebra of Hamiltonian vector fields associated with
a contact structure. This framework generalizes the concept of Lie--Hamilton
systems to the contact setting, enabling the systematic study of dissipative
and non-conservative dynamical systems that lie beyond the scope of
traditional symplectic or Hamiltonian methods. The authors investigate
fundamental properties such as Liouville-type theorems, contact
Marsden--Weinstein reductions, and Gromov non-squeezing theorems, formulated
specifically for contact Lie systems. Moreover, they illustrate the
applicability of these systems in a variety of physical contexts, including
thermodynamics, control theory, and open quantum systems, where dissipation
and non-conservative interactions play a central role. This work highlights
the versatility of contact Lie systems as a powerful framework for modeling
complex dynamical behaviors in both classical and modern physics.\medskip

\noindent \textbullet\ \textit{Statistical Mechanics} employs probabilistic
methods to analyze systems with a large number of particles, bridging
classical mechanics and thermodynamics. Macroscopic behavior is described in
terms of microscopic properties through the use of statistical ensembles,
such as the canonical ensemble (constant temperature) and the microcanonical
ensemble (constant energy). In their seminal work \cite{BT}, Bravetti and
Tapias extend these concepts to non-conservative systems by employing the
framework of contact geometry. They derive generalized Hamilton equations
that incorporate dissipative terms and establish a contact version of
Liouville Theorem, ensuring the preservation of a generalized phase-space
volume under the contact flow. Furthermore, the authors construct invariant
microcanonical and canonical measures adapted to this setting, demonstrating
how contact geometry naturally accommodates the statistical description of
physical processes in which energy is not conserved.

In a subsequent work \cite{BT2}, the same authors introduce a deterministic
algorithm known as \textit{contact density dynamics}, designed to generate
arbitrary prescribed target distributions in phase space. This algorithm
operates within a non-Hamiltonian framework on an extended phase space,
where a friction term explicitly depending on the contact density is
incorporated into the dynamics. Contact density dynamics provides a
consistent approach that enables the simulation of complex systems in
regimes where conventional thermostatting methods fail, including
energy-dissipating and nonequilibrium systems.

Taken together, this contact-geometric approach extends the scope of
statistical mechanics to dissipative processes, such as open thermodynamic
systems, and provides a geometric framework for understanding the behavior
of complex systems both in and out of equilibrium.\medskip

\noindent \textbullet\ \textit{Optimal Control} theory seeks to determine
the best strategy to influence the evolution of a dynamical system by
minimizing or maximizing a prescribed objective functional. It relies on the
Pontryagin Maximum Principle, which plays a central role in applications
ranging from engineering and economics to biology and beyond. This principle
introduces a Hamiltonian function defined in terms of the system dynamics
and a cost function, and establishes that an optimal control must maximize
this Hamiltonian at each instant of time. Introduced in the 1950s \cite{PBGM}%
, its lasting relevance arises from the fact that maximizing the Hamiltonian
is significantly more tractable than directly solving the original
infinite-dimensional control problem. In \cite{BM}, a geometric formulation
of this principle is developed, providing an intrinsic framework for optimal
control problems and showing deep connections between classical mechanics
and optimal control theory. Building on this foundation, the work in \cite%
{LLMoptimalcontrol} investigates the interplay between optimal control,
contact dynamics, and the Herglotz variational problem, developing a unified
contact-geometric framework to address optimal control in systems subject to
non-conservative forces and constraints. For a concise and insightful
perspective on how contact geometry enriches the understanding of optimal
control systems, see also \cite{O}.\medskip

\noindent \textbullet\ \textit{Control Theory} addresses the design of
systems exhibiting dynamical behavior, with the main goal of ensuring that a
system follows a desired trajectory or satisfies prescribed criteria such as
stability, accuracy, and robustness. A central concept is feedback, whereby
the system output is continuously measured and used to adjust the input in
real time. Within the framework of Hamiltonian systems, this perspective
naturally leads to the study of input--output systems, which play a
fundamental role in applications from robotics and engineering to biological
systems, where controlling the behavior of complex physical systems is
essential.

A clear introduction to this topic is provided in \cite{RMS}, where
input--output Hamiltonian systems defined on contact manifolds are studied.
The authors analyze the preservation of the contact structure in a
closed-loop feedback, in which the system output is fed back into the
dynamics. They characterize the resulting closed-loop contact forms in terms
of their associated Reeb vector fields, showing that the contact structure
is preserved under feedback. A central result of this work is that
asymptotic stabilization to an equilibrium point cannot be achieved through
this mechanism. Instead, stabilization is possible only on certain invariant
Legendre submanifolds. This finding has important implications for the
design of feedback control systems, particularly in engineering applications
where the preservation of underlying geometric structures is essential for
ensuring stability.

Models describing interactions with the environment give rise to a wide
range of applications. In particular, extending port-Hamiltonian
formulations to incorporate stochastic dynamics opens new ways for modeling
complex physical processes subject to random perturbations. A notable
example is presented in \cite{CDM}, where Cordoni \textit{et al.} extend the
theory of port-Hamiltonian systems to include stochastic effects arising
from environmental interactions or measurement noise. The resulting
framework has extensive applications, ranging from molecular dynamics to the
rapidly advancing field of robotic control.

\medskip

\noindent \textbullet\ The applicability of the structures of analytical
mechanics to \textit{Economics} is illustrated in the work of Russell \cite%
{R}. The central idea is that economic data on prices and quantities arise
from actions that optimize objective functions on a symplectic manifold,
where quantities are given by derivatives of profit or utility with respect
to prices, reflecting a necessary condition for profit maximization. Within
this framework, the work shows how the Samuelson area condition, a
fundamental concept in economic theory introduced by Paul Samuelson, admits
a natural reinterpretation in symplectic terms. This perspective provides a
rigorous geometric foundation for understanding economic equilibrium and
optimization, in which the interplay between prices and quantities mirrors
the underlying geometric duality of symplectic manifolds.

In \cite{Sw}, symplectic structures are employed to reinterpret foundational
concepts in economics. In his work Foundations of Economic Geometry,
Swierstra explores how symplectic geometry can be used to model economic
processes, such as those described by Say's Law and the Phillips Curve, by
interpreting income as a $1-$form that generates the underlying symplectic
structure. This approach provides a geometric framework for understanding
economic equilibrium and optimization that goes beyond the static models of
neoclassical economics. Although Swierstra work focuses on symplectic
geometry, it naturally opens the door to further generalizations based on
contact geometry, which may be more effective in describing irreversible
processes and dynamic constraints inherent in economic systems.\medskip

\noindent \textbullet\ Even more remarkable is the connection between the
mathematical structures considered in this work and the fields of \textit{%
Neuroscience and Cognitive Science}, as explored in \cite{P}. In his book
Elements of Neurogeometry, Petitot investigates how visual neurons process
optical stimuli and how the functional architecture of the brain implements
geometric algorithms to construct spatial perception. In particular, the
book develops mathematical models of the primary visual cortex in which the
highly specific organization of neural connectivity is described using
sub-Riemannian geometry and contact structures, especially in relation to
the processing of visual contours and spatial representations. From this
perspective, Petitot argues that the cerebral cortex operates as a dynamical
system in which geometric structures emerge from collective neural
interactions, reflecting the underlying neurogeometric organization of
perception.

Along these lines, for readers interested in the role of mathematical
structures in the study of complex brain processes, a complementary
reference is \cite{GC}. In this work, Gabbiani and Cox provide a
comprehensive introduction to the mathematical foundations of neuroscience,
covering essential tools such as differential equations, linear algebra,
probability theory, stochastic processes, and statistical analysis. This
text complements the geometric approaches discussed above by supplying the
quantitative foundations required for modeling neural dynamics, signal
processing, and decision-making mechanisms in the brain.

\section{Our fundamental geometric arena}

Next, we focus on the geometric framework underlying contact dynamics. Our
starting point will be the symplectification of contact Hamiltonian vector
fields, a powerful technique that involves introducing an additional
dimension to transform the contact structure into a symplectic one. This
process also reveals deep connections between their respective dynamical
behaviors. This geometric approach offers new perspectives both in its
theoretical structure and in its applications to theoretical physics and
engineering. It has been rigorously established by foundational works such
as those of Arnold in \cite{A} and \cite{AG}, as well as by Libermann and
Marle in \cite{LM}.

Also, the influence of symplectification in contact geometry has inspired
the development of alternative geometric frameworks that have proved
fruitful in the study of contact dynamics. A notable example is the work of
Grabowska and Grabowski \cite{GG}, where first jet bundles of line bundles
provide a natural geometric framework for the definition of contact
Hamiltonians and Lagrangians. Within this setting, a contact-adapted
Hamilton--Jacobi theory is formulated in terms of sections of line bundles
rather than scalar functions. This approach yields a powerful geometric
viewpoint on dissipative systems and suggests a broad potential for
applications beyond the realm of classical mechanics.

A foundational approach to differential graded contact geometry and Jacobi
structures was developed by Mehta \cite{Me}. Subsequently, Grabowski \cite%
{Gra} provided a systematic and conceptually unified treatment of contact
and Jacobi structures on graded supermanifolds, where contact structures are
interpreted as symplectic principal $\mathrm{GL}(1,\mathbb{R})-$bundles.
Within this framework, the author investigates contact Courant algebroids,
viewed as contact counterparts of classical Courant algebroids, and
characterizes Kirillov (or Jacobi) brackets as homological Hamiltonians on
linear contact manifolds.

By considering a symplectification of a contact manifold, Khesin and
Tabachnikov \cite{KT} extend the notion of integrability to contact
dynamics. In contrast with the symplectic case, where integrability is
characterized by the existence of an invariant Lagrangian foliation, the
contact setting requires a flag of two foliations adapted to the contact
structure: a Legendrian foliation together with a co-Legendrian foliation.
This characterization highlights how contact geometry enriches the classical
theory of integrable systems, providing new tools to investigate dynamical
settings in which dissipative and conservative features naturally coexist.

As a final example, the work in \cite{FQ} proposes a general geometric
method for constructing contact algorithms, understood as numerical
integrators compatible with the underlying contact structure of dynamical
systems. The approach relies on the correspondence between contact geometry
on $\mathbb{R}^{2n+1}$ and conic symplectic geometry on $\mathbb{R}^{2n+2}$,
which allows such algorithms to be systematically derived from symplectic
methods originally developed for Hamiltonian systems. In this way, the
framework establishes a direct link between abstract contact geometry and
concrete numerical simulations.

We now turn to the precise mathematical formulation of these structures. We
briefly recall standard definitions and properties needed in the sequel. A
symplectic manifold is a $2n-$dimensional manifold $M^{s}$ endowed with a
closed $2-$form $\omega _{2}$ such that%
\begin{equation*}
\ker \omega _{2}=\{X\in \mathfrak{X}(M_{s}):i_{X}\omega _{2}=0\}=\{0\}.
\end{equation*}%
A submanifold $N\subset M_{s}$ is called coisotropic if for every point $%
p\in N$ the $\omega _{2}-$orthogonal complement $\left( T_{p}N\right) ^{\bot
}$ is included in $T_{p}N.$ It is clear that in this case\ $2n-\dim N\leq
\dim N$ and hence $n\leq \dim N.$ In the same way, $N$ is called isotropic
if for every point $p\in N$ the tangent space $T_{p}N$ is included in $%
\left( T_{p}N\right) ^{\bot }.$ It holds that $\dim N\leq n.$ When
\thinspace $N$ is both isotropic and coisotropic it is called Lagrangian
submanifold and for every point $p\in N$ we have $T_{p}N=\left(
T_{p}N\right) ^{\bot },$ consequently $\dim N=n.$

The Hamiltonian vector field associated to a smooth \ function $f$ on $M^{s}$
is the vector field $X_{f}$ defined by the condition $i_{X_{f}}\omega
_{2}=df.$ The Poisson bracket of two smooth functions $f$ and $g$ on $M$ is
defined by $[f,g])=\omega _{2}(X_{f},X_{g})$ and endows $C^{\infty }(M)$
with a Lie algebra structure. It is easy to see that if the submanifold $N$
is locally defined in a neigborhood $U$ by the zero set of the functions $%
f_{1},...,f_{k},$ then the Hamiltonian vector fields $X_{f_{i}}$ constitute
a basis for $\left( T_{p}N\right) ^{\bot },$ $(p\in U\cap N).$ It is a
remarkable fact the the coisotropic condition on $N$ is equivalent to the
stability of its sheaf of ideals on $M^{s}$ under the Poisson bracket (see,
for example \cite{dLR}). This equivalence highlights the deep connection
between geometric properties of submanifolds and algebraic structures in
Poisson geometry.

Before introducing the formal definition of a contact structure that is most
convenient to our purposes, we first present an intuitive, geometrical, and
classical description. Let $M_{c}$ be an odd-dimensional manifold and let $%
\eta $ be a nowhere-vanishing $1$-form on $M_{c}$. At each point $q\in M_{c}$%
, the kernel of the $2$-form $d_{q}\eta $ has dimension at least one. We are
interested in the situation where this kernel does not intersect the
hyperplane $\ker \eta _{q}$, that is, 
\begin{equation*}
\ker (d_{q}\eta )\cap \ker (\eta _{q})=\{0\}.
\end{equation*}%
This condition implies that 
\begin{equation*}
\dim \ker (d_{q}\eta )=1.
\end{equation*}%
When it holds at every point $q\in M_{c}$, the hyperplane distribution $\ker
\eta $ is said to be non-degenerate and defines a contact distribution on $%
M_{c}$.

Moreover, this condition excludes the possibility of expressing the $2$-form 
$d\eta $ as a wedge product of the form 
\begin{equation*}
d\eta =\eta \wedge \theta ,
\end{equation*}%
for some $1$-form $\theta $ on $M_{c}$. By the Frobenius theorem, this
implies that the contact distribution $\ker \eta $ is maximally
non-integrable. In fact, as will follow from our subsequent developments,
the highest-dimensional integral distribution associated with $\ker \eta $
has dimension $n+1.$

We now introduce a line-bundle formulation of a contact structure that is
not tied to the existence of a global $1-$form, while still pursuing the
geometric goal of defining a contact distribution on the tangent bundle.
This perspective, already anticipated in the approaches discussed above, as
in \cite{GG}, constitutes the germ of modern topological viewpoints of broad
scope.

\begin{definition}
Let $M_{c}$ be a manifold of dimension $2n+1$ and let $\mathcal{C}$ be a
sheaf $1-$forms on $M_{c}$ such that every point of $M_{c}$ has a
neighborhood $U$ where there exist local canonical coordinates $%
z,x_{1},...,x_{n},y_{1},...,y_{n}$ for which the $1-$form%
\begin{equation*}
\alpha _{U}=dz-\sum_{i}y_{i}dx_{i}
\end{equation*}%
locally generates $\mathcal{C}$ on $U.$ A manifold $M_{c}$ equipped with
such a sheaf $\mathcal{C}$ is called a contact manifold, and $\alpha _{U}$
is referred to as a local contact form.
\end{definition}

For each point $p\in M,$ denote by $\mathcal{C}_{p}$ the vector space formed
by the values at $p$ of the local sections of $C$ in neighborhoods of $p.$
Thus, $\mathcal{C}_{p}$ is a line in $T_{p}^{\ast }M$ not containing the
origin.

\begin{definition}
A diffeomorphism $f:(M_{c},\mathcal{C})\rightarrow (M_{c},\mathcal{C})$ is a
contactomorphism if $f^{\ast }\mathcal{C}=\mathcal{C}.$ A vector field $X\in 
\mathfrak{X}(M_{c})$ is an infinitesimal contactomorphism if its
corresponding $1-$parameter group of transformations $\phi _{t}$ is formed
by contactomorphisms, 
\begin{equation*}
\phi _{t}:(M_{c},\mathcal{C})\rightarrow (M_{c},\mathcal{C}),\text{ }\forall
t\in \mathbb{R}.
\end{equation*}%
It is clear that%
\begin{equation*}
L_{X}\mathcal{C}\subset \mathcal{C}.
\end{equation*}
\end{definition}

\begin{definition}
Given the contact structure $(M_{c},\mathcal{C})$ we shall denote by $M^{s}$
the set%
\begin{equation*}
M^{s}=\left\{ \left( p,\beta _{p}\right) ,\text{ }p\in M,\text{ }\beta
_{p}\in \mathcal{C}_{p}\right\}
\end{equation*}%
endowed with the projection 
\begin{equation*}
\pi :M^{s}\rightarrow M_{c}:\left( p,\beta _{p}\right) \longmapsto p
\end{equation*}%
and the differentiable structure defined as follows. Given an open set $U$
with canonical coordinates $z,x_{1},...,x_{n},y_{1},...,y_{n}$ we define
coordinates $t,z,x_{1},...,x_{n},$ $y_{1},...,y_{n}$ on $\pi ^{-1}(U)$ as
follows: $t\left( p,\beta _{p}\right) \in \mathbb{R}^{\ast }$ is such that%
\begin{equation*}
t\left( p,\beta _{p}\right) ^{-1}\cdot \beta
_{p}=d_{p}z-\sum_{i}y_{i}(p)d_{p}x_{i}.
\end{equation*}
\end{definition}

In this manner, we define the canonical $1-$form on $M^{s}$ by%
\begin{equation*}
\omega _{\left( p,\beta _{p}\right) }(X)=\beta _{p}(\pi _{\ast }X),\text{ }%
X\in T_{\left( p,\beta _{p}\right) }(M^{s}).
\end{equation*}%
Then, $\omega $ can be locally expressed by%
\begin{equation}
\omega =tdz-\sum_{i}ty_{i}dx_{i}  \tag{2}
\end{equation}%
where $(t,z,x_{i},y_{j})$ are the local coordinates in the domain $\pi
^{-1}(U),$ with $U\subset M_{c}$ being a chart equipped with the canonical
coordinates $(z,x_{i},y_{j})$. With a slight abuse of notation, using the
previously introduced notation, we will write the $1-$form $\omega $ as 
\begin{equation}
\omega =t\alpha _{U}  \tag{3}
\end{equation}%
where $\alpha _{U}$ is the local contact form from Definition 1.

From the expression (2) one easily deduce that $\Omega =d\omega $ is a
symplectic form on $M^{s}.$

\begin{definition}
The pair $(M^{s},\Omega =d\omega )$ shall be called the symplectic cover of
the contact structure $(M_{c},\mathcal{C}).$ If there exists a global
section of the sheaf $\mathcal{C}$, then the contact structure is determined
by a globally defined form $\eta $ on $M_{c}$, which is called the contact
form. In this case, the contact structure $C$ is called trivial or
cooriented.
\end{definition}

\noindent \textbf{Remark.} Alternative constructions of homogeneous
symplectic manifolds associated with contact structures appear in the
literature, primarily for extended tangent bundles $T^{\ast }Q\times \mathbb{%
R}.$ For instance, \cite{vSM}, presents a construction on the projective
bundle $\mathcal{P}(T^{\ast }(Q\times \mathbb{R})),$ while \cite{ibanez} is
based on the symplectic form $d(e^{-t}\eta )$ defined on the contact
canonical structure $(T^{\ast }Q\times \mathbb{R},\eta ).$ Both
constructions are symplectomorphic, although the underlying geometric
constructions, as well as the very notion of homogeneity (as discussed
below), are fundamentally different. As already anticipated in the
introductory discussion of this section,\ Grabowska and Grabowski introduce
in \cite{GG} a novel geometric approach to contact Hamiltonian mechanics
valid for general not necessarily cooriented contact structures. Unlike
traditional formulations, which rely on a global contact form $\eta ,$
contact Hamiltonians in this framework are no longer functions on the
contact manifold $M$ but rather $1-$homogeneous functions on a $GL(1,\mathbb{%
R})-$principal bundle $P\rightarrow (M_{c},C).$ This perspective, based on
the differential graded framework introduced by Mehta \cite{Me}, also
underlies the further developments by Grabowski in \cite{Gra}, where deep
connections between contact geometry and graded and supergeometry, as well
as Courant and Jacobi structures, are established. Our construction provides
an explicit realization of this principal bundle, mirroring the canonical
Liouville form in cotangent bundle geometry, and constituting the geometric
foundation for our analysis. In particular, this explicit realization will
also provide the basis for our subsequent calculations.\medskip

In order to illustrate the construction, we consider the special case in
which $M_{c}$ is connected and cooriented by a global contact form $\eta .$
In this case, the symplectic cover $M^{s}$ decomposes into the union of two
disjoint, open, connected components,%
\begin{equation*}
M^{s,+}=\{(p,t\eta _{p}),\quad p\in M_{c},t>0\},\text{ }M^{s,-}=\{(p,t\eta
_{p}),\quad p\in M_{c},t<0\}.
\end{equation*}

A is well known, the Liouville vector field of the exact symplectic
structure on $M^{s}$ is the unique vector field $\mathbb{L}$ on $%
(M^{s},\Omega =d\omega )$ such that%
\begin{equation*}
i_{\mathbb{L}}\Omega =\omega .
\end{equation*}%
In the local coordinates $(t,z,x_{i},y_{j})$ of the chart $\pi
^{-1}(U)\subset M_{c},$ we have%
\begin{equation*}
\mathbb{L}\mathcal{=}t\frac{\partial }{\partial t}.
\end{equation*}

A vector field $T$ on $(M^{s},\Omega )$ will be called $k-$homogeneous, if $%
\mathcal{L}_{\mathbb{L}}T=kT$ ($k\in \mathbb{Z}).$ In particular, the
symplectic structure $\Omega $ on $M^{s}$ is $1-$homogeneous. Indeed, we
have 
\begin{equation*}
\mathcal{L}_{\mathbb{L}}\omega =\omega .
\end{equation*}

For a smooth function $f\in C^{\infty }(M^{s}),$ we consider the Hamiltonian
vector field $X_{f}$ in the standard way, defined by the equation $%
i_{X_{f}}\Omega =df.$ By taking the Lie derivative $\mathcal{L}_{\mathbb{L}}$
and using the relation $[\mathcal{L}_{\mathbb{L}},i_{X_{f}}]=i_{[\mathbb{L}%
,X_{f}]}$ we arrive at the equality of the vector fields on $M^{s},$%
\begin{equation}
\lbrack \mathbb{L},X_{f}]=X_{\mathbb{L}f-f}.  \tag{4}
\end{equation}

This identity expresses the fact that the commutator with the Liouville
vector field measures the degree of homogeneity of the Hamiltonian function.

On the other hand, using the Leibniz rule for the Lie derivative, we have%
\begin{equation*}
\mathcal{L}_{\mathbb{L}}\left\{ \Omega (X_{f},X_{g})\right\} =\left( 
\mathcal{L}_{\mathbb{L}}\Omega \right) (X_{f},X_{g})-\Omega (\mathcal{L}_{%
\mathbb{L}}X_{f},X_{g})-\Omega (X_{f},\mathcal{L}_{\mathbb{L}}X_{g}).
\end{equation*}%
If we write the Poisson bracket on $M^{s}$ as $[f,g]_{\Omega }=\Omega
(X_{f},X_{g})$ then, taking into account equation (4), we obtain%
\begin{equation*}
\mathbb{L}[f,g]_{\Omega }=[\mathbb{L}f,g]_{\Omega }+[f,\mathbb{L}g]_{\Omega
}-[f,g]_{\Omega }.
\end{equation*}

As a consequence, the Poisson bracket on $(M^{s},\Omega )$ preserves the
space of homogeneous functions of degree one. This observation naturally
leads to the following definition.

\begin{definition}
Given $F,G\in C^{\infty }(M_{c}),$ we define the contact bracket as%
\begin{equation*}
\lbrack F,G]_{c}=\frac{1}{t}[tF,tG]_{\Omega },
\end{equation*}%
where $tF$ and $tG$ denote the corresponding homogeneous lifts to the
symplectic cover $M^{s}.$
\end{definition}

\noindent \textbf{Remark. }The stability of $1-$homogeneous functions, which
underlies the preceding definition, has profound topological consequences
that extend beyond the scope of this paper. In the approach to contact
Hamiltonian mechanics introduced earlier in \cite{GG}, where the
construction is carried out on a symplectic principal $GL(1,\mathbb{R)}-$%
principal bundle $\tau :P\rightarrow (M_{c},C)$, $1-$homogeneous functions
on $P$ are identified with sections of the dual line bundle $L_{P}^{\ast
}\rightarrow M$ where $L_{P}=P\times _{\mathbb{R}^{\ast }}\mathbb{R}%
\rightarrow M$ is the line bundle associated with the $\mathbb{R}^{\ast }-$%
principal bundle $P\rightarrow M$. This stability induces a Jacobi bracket
on the sections of $L_{P}^{\ast },$ thereby endowing the structure with that
of a local Kirillov manifold \cite{kirillov} or a Jacobi bundle \cite{marle}%
. The algebraic and geometric foundations of the Kirillov bracket are
presented, for instance, in \cite{BGG}, where its relation to Lie algebroids
is also emphasized. A further generalization of the Kirillov bracket to the
setting of Courant algebroids is developed in the previously cited work by
Grabowski \cite{Gra}, extending classical contact geometry to graded
manifolds and supermanifolds. This framework integrates graded contact
geometry with contact analogues of Courant algebroids, providing powerful
tools for the geometric description of dynamical systems with dissipation,
graded symmetries, or additional geometric constraints.

\bigskip

Although we will not explore these aspects further here, the expression in
Definition 5, which depends on the choice of contact coordinates underlying
(2), will later admit an intrinsic formulation in the cooriented case,
together with an explicit local coordinate expression.

\begin{proposition}
Let $X_{f}$ be the Hamiltonian vector field on the symplectic manifold $%
(M^{s},\Omega =d\omega )$ defined by%
\begin{equation*}
i_{X_{f}}\Omega =df.
\end{equation*}%
Then,

(a) $i_{X_{f}}\omega =-\mathbb{L}(f).$

(b) $L_{_{X_{f}}}\omega =0\Longleftrightarrow df$ it is a homogeneous $1-$%
form of degree $1$.
\end{proposition}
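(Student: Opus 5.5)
Both parts follow from the defining relation $i_{\mathbb{L}}\Omega=\omega$ of the Liouville vector field together with Cartan's formula, so the plan is a short direct computation on $M^{s}$ with no use of local coordinates.

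For (a), I will write $\omega=i_{\mathbb{L}}\Omega$ and contract with $X_{f}$:
\begin{equation*}
i_{X_{f}}\omega=(i_{\mathbb{L}}\Omega)(X_{f})=\Omega(\mathbb{L},X_{f})=-\Omega(X_{f},\mathbb{L})=-(i_{X_{f}}\Omega)(\mathbb{L})=-df(\mathbb{L})=-\mathbb{L}(f).
\end{equation*}
The only care needed is the sign convention: $i_{X_{f}}\Omega=df$ means $\Omega(X_{f},\cdot)=df$, and skew-symmetry of $\Omega$ produces the minus sign.

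For (b), I will apply Cartan's magic formula $\mathcal{L}_{X_{f}}\omega=d\,i_{X_{f}}\omega+i_{X_{f}}d\omega$. Using (a) and $d\omega=\Omega$, this gives
\begin{equation*}
\mathcal{L}_{X_{f}}\omega=-d(\mathbb{L}f)+df .
\end{equation*}
Hence $\mathcal{L}_{X_{f}}\omega=0$ if and only if $d(\mathbb{L}f)=df$. Since $d$ commutes with Lie derivatives, $d(\mathbb{L}f)=d\,\mathcal{L}_{\mathbb{L}}f=\mathcal{L}_{\mathbb{L}}(df)$. The condition is therefore $\mathcal{L}_{\mathbb{L}}(df)=df$, which is precisely the statement that the $1$-form $df$ is homogeneous of degree $1$, in the sense used for $\omega$ itself ($\mathcal{L}_{\mathbb{L}}\omega=\omega$). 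Both implications are read off from this single identity.

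The only delicate point is interpretive rather than computational. The condition is on $df$, not on $f$: it says $\mathbb{L}f-f$ is locally constant, which is strictly weaker than $f$ being $1$-homogeneous. For instance, $f=tF+c$ with $c$ constant still satisfies it. I will remark on this to match the statement's wording, and note that on a connected component such as $M^{s,+}$ it reduces to $f$ being $1$-homogeneous up to an additive constant. This is consistent with identity (4), which gives $[\mathbb{L},X_{f}]=X_{\mathbb{L}f-f}=0$ in that case.
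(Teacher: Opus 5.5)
Your proof is correct and follows the paper's argument exactly. For (a) you contract $i_{\mathbb{L}}\Omega=\omega$ with $X_f$ and use skew-symmetry, and for (b) Cartan's formula gives $\mathcal{L}_{X_f}\omega=df-\mathcal{L}_{\mathbb{L}}(df)$; your added remark that the condition only forces $\mathbb{L}f-f$ to be locally constant (so $f$ is $1$-homogeneous up to an additive constant) is a correct and worthwhile clarification of the statement's wording.
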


\begin{proof}
(a) Using the definition of the Liouville vector field $\mathbb{L,}$ we
compute%
\begin{equation*}
i_{X_{f}}\omega =i_{X_{f}}i_{\mathbb{L}}\Omega =-i_{\mathbb{L}}df=-\mathbb{L}%
(f).
\end{equation*}

(b) By the Cartan formula, 
\begin{equation*}
\mathcal{L}_{_{X_{f}}}\omega =\left( i_{_{X_{f}}}d+di_{_{X_{f}}}\right)
\omega =df-d\mathbb{L}(f)=df-\mathcal{L}_{\mathbb{L}}\left( df\right) ,
\end{equation*}%
whereby the statement of the Proposition.
\end{proof}

\begin{definition}
A vector field $X\in \mathfrak{X(}M^{s})$ verifying%
\begin{equation*}
\mathcal{L}_{X}\omega =0
\end{equation*}%
will be called an exact infinitesimal symplectomorphism on the symplectic
manifold $(M^{s},\Omega =d\omega ).$
\end{definition}

If $X\in \mathfrak{X(X}^{s})$ is an exact infinitesimal symplectomorphism,
then $\left[ X,\mathbb{L}\right] =0.$ In fact, we have%
\begin{eqnarray*}
i_{\left[ X,\mathbb{L}\right] }\Omega &=&\left[ L_{X},i_{\mathbb{L}}\right]
\Omega =\left( \mathcal{L}_{X}i_{\mathbb{L}}-i_{\mathbb{L}}\mathcal{L}%
_{X}\right) \Omega \\
&=&\mathcal{L}_{X}\omega =0.
\end{eqnarray*}%
By non-degeneracy of $\Omega ,$ this implies $\left[ X,\mathbb{L}\right] =0.$

\begin{proposition}
An exact infinitesimal symplectomorphism $X\in \mathfrak{X(}M^{s})$ is
projectable with respect to $\pi :M^{s}\rightarrow M_{c}$ and its $\pi -$%
projection $Y\in \mathfrak{X(}M_{c})$ is an infinitesimal contactomorphism
on $(M_{c},\mathcal{C}).$ Conversely, for every contactomorphism $Y$ on $%
M_{c}$, there exists an exact infinitesimal symplectomorphism $X\in 
\mathfrak{X(}M^{s})$ that projects to $Y.$
\end{proposition}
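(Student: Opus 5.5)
Let $X$ be an exact infinitesimal symplectomorphism. The plan rests on the fact just established that $[X,\mathbb{L}]=0$, together with the structure of the fibres of $\pi$. In local coordinates $(t,z,x_i,y_j)$ on $\pi^{-1}(U)$ the Liouville field is $\mathbb{L}=t\,\partial/\partial t$, and the fibres of $\pi$ are the orbits of $\mathbb{L}$ (connected components of $\mathbb{R}^{\ast}$-lines).

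\textbf{Projectability.} Write
\[
X=a\,\partial_t+b\,\partial_z+\sum_i\left(c_i\,\partial_{x_i}+e_i\,\partial_{y_i}\right).
\]
The condition $[t\partial_t,X]=0$ forces $t\,\partial_t b=0$, $t\,\partial_t c_i=0$ and $t\,\partial_t e_i=0$. Since $t\neq0$, the components $b,c_i,e_i$ do not depend on $t$. Hence $\pi_\ast X$ is well defined on each connected component of the fibre. To handle the two half-lines $t>0$ and $t<0$ (the fibre $\mathcal{C}_p\setminus\{0\}$ is disconnected), I would use that $\omega$ and $\mathbb{L}$ are invariant under the fibre involution $\beta_p\mapsto-\beta_p$. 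Either one restricts to a component $M^{s,\pm}$, or one notes that $\mathcal{L}_X\omega=0$ determines $X$ from $\mathbb{L}$-data in a sign-symmetric way.

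The cleanest route is the following. By Proposition (a)-type reasoning, $X$ is Hamiltonian with $f=-i_X\omega$. The function $f$ is $1$-homogeneous, since $\mathbb{L}f=-i_{[\mathbb{L},X]}\omega-i_X\mathcal{L}_{\mathbb{L}}\omega=f$. Hence $f=tF$ locally, with $F$ defined on $M_c$, and $X=X_{tF}$ has coordinate components computable explicitly from (2). This shows directly that the spatial components are $t$-independent and agree on both sheets. I expect this global well-definedness across sheets and charts to be the main point needing care; the remainder is formal.

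\textbf{Projection is a contactomorphism.} Let $\phi_t$ be the flow of $X$ and $\psi_t$ the flow of $Y=\pi_\ast X$, so that $\pi\circ\phi_t=\psi_t\circ\pi$. From $\mathcal{L}_X\omega=0$ we get $\phi_t^\ast\omega=\omega$. Using the intrinsic definition $\omega_{(p,\beta)}(v)=\beta(\pi_\ast v)$, the identity $\phi_t^\ast\omega=\omega$ gives $\beta\circ(\psi_t)_\ast=\beta'$, where $\phi_t(p,\beta)=(\psi_t p,\beta')$. Thus $\psi_t^\ast\mathcal{C}_{\psi_t(p)}=\mathcal{C}_p$, so $\psi_t^\ast\mathcal{C}=\mathcal{C}$. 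Equivalently, and infinitesimally, $\mathcal{L}_Y\alpha_U=\lambda\alpha_U$ follows from $0=\mathcal{L}_X(t\alpha_U)=X(t)\,\alpha_U+t\,\mathcal{L}_Y\alpha_U$, which gives $\lambda=-X(t)/t$.

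\textbf{Converse.} Given an infinitesimal contactomorphism $Y$, its flow acts on $T^\ast M_c$ by $(\psi_t^{-1})^\ast$ and preserves the subbundle $\mathcal{C}$. This produces a flow $\phi_t(p,\beta)=(\psi_t(p),(\psi_t^{-1})^\ast\beta)$ on $M^s$, i.e.\ the cotangent lift restricted to $M^s$. The tautological form $\omega$ is the restriction of the Liouville form of $T^\ast M_c$, so cotangent lifts preserve it, and the generator $X$ satisfies $\mathcal{L}_X\omega=0$ and $\pi_\ast X=Y$. In local coordinates, if $\mathcal{L}_Y\alpha_U=\lambda\alpha_U$, one checks this lift is $X=Y-\lambda t\,\partial_t$, confirming $\mathcal{L}_X(t\alpha_U)=-\lambda t\alpha_U+t\lambda\alpha_U=0$.
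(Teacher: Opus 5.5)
\textbf{Gap in the forward direction.} Your argument breaks exactly where you flagged it, and both proposed fixes are false. Let $\sigma(p,\beta)=(p,-\beta)$, which in coordinates is $t\mapsto -t$.

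\begin{itemize}
\item $\omega$ is not $\sigma$-invariant: $\sigma^{\ast}(t\alpha_U)=-t\alpha_U$, so $\sigma^{\ast}\omega=-\omega$.
\item $\mathbb{L}(f/t)=0$ only says that $f/t$ is constant on each half-line of $\pi^{-1}(p)$. So $F=f/t$ is a function on the ray space $M^{s}/\mathbb{R}^{+}$, a double cover of $M_c$, and not on $M_c$ itself.
\end{itemize}

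Nothing in $\mathcal{L}_X\omega=0$ ties the two sheets together. Take $M_c=\mathbb{R}^3$ with $\alpha=dz-y\,dx$, and set $X=\partial/\partial z$ on $\{t>0\}$ and $X=0$ on $\{t<0\}$. Then $\mathcal{L}_X\omega=0$ and $[X,\mathbb{L}]=0$. Yet $\pi_{\ast}X$ equals $\partial/\partial z$ at $(p,\alpha_p)$ and $0$ at $(p,-\alpha_p)$. So the forward claim, read literally on all of $M^s$, is false, and no argument can show the components ``agree on both sheets''.

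The paper's proof has the same hole. It is hidden in the identification of $\mathbb{L}$-invariant functions with $\pi^{\ast}C^{\infty}(M_c)$, which are not the same thing. The statement needs one of two repairs:
\begin{itemize}
\item work on one sheet: $M^{s,+}$ in the cooriented case, as the paper implicitly does when it lifts group actions with $t>0$; in general, project to $M^{s}/\mathbb{R}^{+}$ instead of $M_c$;
\item or assume $\sigma_{\ast}X=X$, equivalently $f\circ\sigma=-f$.
\end{itemize}
With either repair, your computation that $[t\partial_t,X]=0$ makes the spatial components $t$-independent gives projectability. The rest of your forward argument is then essentially the paper's; infinitesimally it is $\lambda=-X(t)/t$. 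In your flow version, the identity obtained from $\phi_t^{\ast}\omega=\omega$ should read $\beta'\circ(\psi_t)_{\ast}=\beta$, i.e.\ $\beta'=(\psi_t^{-1})^{\ast}\beta$, not the reverse.

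\textbf{The converse.} Your converse is correct and takes a genuinely different route from the paper.

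The paper works chart by chart. It picks a lift with $X(t)=0$, corrects it to $X-g\mathbb{L}$, and globalizes by appealing to ``uniqueness of $g$''. The clean reason behind that step is that an $\omega$-preserving lift of $Y$ is unique: a vertical field $v\mathbb{L}$ satisfies $\mathcal{L}_{v\mathbb{L}}\omega=v\,\omega$, so it preserves $\omega$ only if $v=0$.

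You instead observe two facts:
\begin{itemize}
\item $M^{s}\subset T^{\ast}M_c$ is the punctured annihilator of the contact distribution, so the cotangent lift of the flow of $Y$ preserves it;
\item $\omega$ is the pullback of the tautological $1$-form of $T^{\ast}M_c$.
\end{itemize}
Hence the generator of the lifted flow preserves $\omega$ and projects to $Y$. This route is intrinsic and needs no charts or gluing. Moreover, since the lift is linear on fibres, it automatically commutes with $\sigma$, which is precisely the symmetry the forward direction has to assume. Your local check $X=Y-\lambda t\,\partial_t$ confirms it is the same field as the paper's $X-g\mathbb{L}$.
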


\begin{proof}
As functions on $M_{c}$ can be understood as $\mathbb{L}-$invarinat
functions on $M^{s}$, the relation $\left[ X,\mathbb{L}\right] =0$ implies
that the vector field $X$ determines a derivation $Y$ of the algebra $\pi
^{\ast }C^{\infty }(M_{c})$. Hence $X$ is $\pi -$projectable and determines
a vector field $Y$ on $M_{c}.$ With a slight abuse of notation and using the
local expression (3), we compute%
\begin{equation*}
0=\mathcal{L}_{X}\omega =\mathcal{L}_{X}(t\alpha _{U})=(Xt)\alpha _{U}+t%
\mathcal{L}_{Y}\alpha _{U}
\end{equation*}%
from which it follows that%
\begin{equation*}
\mathcal{L}_{Y}\alpha _{U}=-\frac{Xt}{t}\alpha _{U}.
\end{equation*}%
To see that $Xt/t$ is in fact a function on $M_{c},$ let us notice that the
relations $\mathbb{L}\mathcal{(}\log t)=1$ and $\left[ X,\mathbb{L}\right]
=0 $ imply $\mathbb{L}\mathcal{(}Xt/t)=0.$ Therefore $Y$ is an infinitesimal
contactomorphism.

Conversely, let $Y\in \mathfrak{X(}M_{c})$ be an infinitesimal
contactomorphism, so that locally%
\begin{equation*}
\mathcal{L}_{Y}\alpha _{U}=g\cdot \alpha _{U}
\end{equation*}%
for some function $g.$ Let $X\in \mathfrak{X(}M^{s})$ be a vector field $\pi
-$projectable onto $Y$ with $X(t)=0.$ Using the previous notation, we have%
\begin{equation*}
\mathcal{L}_{X-g\mathbb{L}}\omega =\mathcal{L}_{X-g\mathbb{L}}(t\cdot \alpha
_{U})=tg\cdot \alpha _{U}-tg\cdot \alpha _{U}=0.
\end{equation*}%
Thus, $X-g\mathbb{L}$ is a local exact infinitesimal symplectomorphism. The
uniqueness of the function $g$ on the above expression allows one to define $%
X-g\mathbb{L}$ globally on $M_{s}$ as an exact infinitesimal
symplectomorphism projecting onto $Y.$
\end{proof}

\begin{proposition}
Let $f\in C^{\infty }(M^{s})$ be a smooth function which is $1-$homogeneous, 
$\mathbb{L}(f)=f.$ Let $X_{f}$ denote the Hamiltonian vector field on $M^{s}$
defined by $i_{X_{f}}\Omega =df.$ Then, $X_{f}$ is $\pi -$projectable and in
local canonical coordinates $(t,z,x_{i},y_{j})$ it is given by%
\begin{align}
X_{f} &=
t\left( \frac{f}{t}\right) _{z}\frac{\partial }{\partial t}%
+\sum_{i=1}^{n}\left( \frac{f}{t}\right) _{y_{i}}\frac{\partial }{\partial
x_{i}}-\sum_{i=1}^{n}\left\{ \left( \frac{f}{t}\right) _{x_{i}}+y_{i}\left( 
\frac{f}{t}\right) _{z}\right\} \frac{\partial }{\partial y_{i}}  \notag \\
&\quad+\sum_{i=1}^{n}\left\{ y_{i}\left( \frac{f}{t}\right) _{y_{i}}-\frac{f}{t}%
\right\} \frac{\partial }{\partial z}  \tag{5}
\end{align}%
Moreover, the vector field $tX_{f/t}$ is also $\pi -$projectable, and its
projection is given by%
\begin{equation}
\pi _{\ast }\left( tX_{f/t}\right) =\sum_{i=1}^{n}\left( \frac{f}{t}\right)
_{y_{i}}\frac{\partial }{\partial x_{i}}-\sum_{i=1}^{n}\left\{ \left( \frac{f%
}{t}\right) _{x_{i}}+y_{i}\left( \frac{f}{t}\right) _{z}\right\} \frac{%
\partial }{\partial y_{i}}+\sum_{i=1}^{n}y_{i}\left( \frac{f}{t}\right)
_{y_{i}}\frac{\partial }{\partial z}.  \tag{6}
\end{equation}
\end{proposition}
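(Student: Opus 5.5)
The plan is to work entirely in the local canonical coordinates $(t,z,x_{i},y_{j})$ on $\pi^{-1}(U)$ and to exploit the homogeneity assumption through the decomposition $f=tF$. Here $F:=f/t$ satisfies $\mathbb{L}(F)=t\partial_{t}(f/t)=0$, so $F$ does not depend on $t$ and is the pull-back of a function on $U\subset M_{c}$.

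\textbf{Projectability.} I first settle projectability intrinsically, before any coordinates. Since $\mathbb{L}f=f$, identity (4) gives $[\mathbb{L},X_{f}]=X_{\mathbb{L}f-f}=X_{0}=0$. I then repeat the argument used in the proof of the preceding Proposition: $X_{f}$ preserves the algebra of $\mathbb{L}$-invariant functions $\pi^{\ast}C^{\infty}(M_{c})$, hence it is $\pi$-projectable. In coordinates this is the statement that the coefficients of $\partial_{z},\partial_{x_{i}},\partial_{y_{i}}$ do not depend on $t$, and the explicit formula will confirm it.

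\textbf{Coordinate formula (5).} From (2)--(3) I write
\[
\Omega=d\omega=dt\wedge\alpha_{U}+t\,d\alpha_{U},\qquad d\alpha_{U}=\sum_{i}dx_{i}\wedge dy_{i}.
\]
I take an unknown field $X=a\partial_{t}+b\partial_{z}+\sum_{i}(c_{i}\partial_{x_{i}}+d_{i}\partial_{y_{i}})$ and compute
\[
i_{X}\Omega=a\,\alpha_{U}-\alpha_{U}(X)\,dt+t\sum_{i}(c_{i}\,dy_{i}-d_{i}\,dx_{i}),\qquad \alpha_{U}(X)=b-\sum_{i}y_{i}c_{i}.
\]
I then compare this with $df=F\,dt+t\,dF$, coefficient by coefficient:
\begin{itemize}
\item the $dz$ component gives $a=tF_{z}$;
\item the $dy_{i}$ components give $c_{i}=F_{y_{i}}$;
\item the $dx_{i}$ components give $-ay_{i}-td_{i}=tF_{x_{i}}$, so $d_{i}=-(F_{x_{i}}+y_{i}F_{z})$;
\item the $dt$ component gives $-\alpha_{U}(X)=F$, so $b=\sum_{i}y_{i}F_{y_{i}}-F$.
\end{itemize}
These values are exactly (5), and the $t$-independence of $b,c_{i},d_{i}$ re-confirms projectability.

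\textbf{Formula (6).} For the second claim I apply the same computation to the $0$-homogeneous function $F=f/t$, for which $dF$ has no $dt$ component. The $dt$ equation becomes $\alpha_{U}(X)=0$, i.e.\ $b=\sum_{i}y_{i}c_{i}$. The remaining equations give
\[
a=F_{z},\qquad c_{i}=F_{y_{i}}/t,\qquad d_{i}=-(F_{x_{i}}+y_{i}F_{z})/t.
\]
Multiplying by $t$, the field $tX_{f/t}$ has $t$-independent coefficients along $\partial_{z},\partial_{x_{i}},\partial_{y_{i}}$, so it is projectable. Discarding the $\partial_{t}$ term yields precisely (6).

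\textbf{Expected difficulty.} The only real obstacle is bookkeeping. One must get the sign convention in $d\alpha_{U}$ right, and correctly account for the contributions of the $dt\wedge\alpha_{U}$ term, which feeds into both the $dt$ and the $dx_{i}$ equations through $a\,y_{i}$. I would check the result against Proposition (a), $i_{X_{f}}\omega=-\mathbb{L}f=-f$, i.e.\ $t\,\alpha_{U}(X_{f})=-tF$, which is consistent with $b-\sum_{i}y_{i}c_{i}=-F$.
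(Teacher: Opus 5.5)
Your computation is correct and follows the paper's proof. You get $[\mathbb{L},X_f]=0$ directly from (4); the paper instead passes through $\mathcal{L}_{X_f}\omega=0$ and the projectability of exact infinitesimal symplectomorphisms, which rests on the same commutation. You then solve $i_X\Omega=df$ in the Darboux coordinates, which is the ``straightforward computation'' the paper has in mind. For (6) you re-solve for $X_{f/t}$. The paper instead reads (6) off from (5) using $X_f=tX_{f/t}+\frac{f}{t}X_t$ and $X_t=-\partial/\partial z$, which just adds $\frac{f}{t}$ to the $\partial/\partial z$ coefficient. Both routes are fine.

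There is, however, a genuine gap in one step, and the paper's projectability argument uses the same step. You write ``$\mathbb{L}(F)=0$, so $F$ is the pull-back of a function on $U$'', or equivalently that the $\mathbb{L}$-invariant functions are exactly $\pi^{\ast}C^{\infty}(M_c)$. Since $t$ ranges over $\mathbb{R}^{\ast}$, $\mathbb{L}$-invariance only gives $\partial F/\partial t=0$, i.e.\ invariance under positive rescalings. So $F$ can take different values on the sheets $t>0$ and $t<0$ over the same point, and your coordinate check of $t$-independence does not detect this.

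For example, take $f(p,\beta)=\|\beta\|_g$ for a Riemannian metric $g$. It is smooth on $M^s$ and satisfies $\mathbb{L}f=f$. Yet $F=\operatorname{sgn}(t)\,\|\alpha_U\|_g$, and $\alpha_U(\pi_{\ast}X_f)=-F$ (from (5), or from $i_{X_f}\omega=-\mathbb{L}f$). This takes opposite nonzero values at $(p,\alpha_U(p))$ and $(p,-\alpha_U(p))$, so $X_f$ is not $\pi$-projectable.

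There are two ways to close the gap. One is to require homogeneity for the full $\mathbb{R}^{\ast}$-action, $f(p,s\beta)=s\,f(p,\beta)$ for all $s\in\mathbb{R}^{\ast}$. Then $F$ is also invariant under $\beta\mapsto-\beta$ and is a genuine pull-back $\pi^{\ast}\bar{F}$. The other is to work on a single sheet such as $t>0$. Under either hypothesis the rest of your argument and both coordinate formulas go through unchanged.
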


\begin{proof}
By Proposition 6 we know that $L_{X_{f}}\omega =0.$ Proposition 8 then
implies that the vector field $X_{f}$ is $\pi -$projectable. Moreover, the
function $f/t$ $M^{s}$ satisfies $\mathbb{L}(f/t)=0$ and is therefore also $%
\pi -$projectable. Hence, formula (5) follows from a straightforward
computation using the Darboux expression%
\begin{equation*}
\Omega =dt\wedge dz+\sum_{i=1}^{n}d\left( -ty_{i}\right) \wedge dx_{i}.
\end{equation*}%
Furthermore, the identity 
\begin{equation}
X_{f}=tX_{f/t}+\frac{f}{t}X_{t}  \tag{7}
\end{equation}%
together with $X_{t}=-\partial /\partial z$ (that is, $i_{\partial /\partial
z}\Omega =-dt),$ and formula (5), yields expression (6).
\end{proof}

\begin{proposition}
In local canonical coordinates $(z,x_{i},y_{j})$, the contact bracket of two
smooth functions $F,G\in C^{\infty }(M_{c})$ is given by,%
\begin{equation}
\lbrack F,G]_{c}=\sum_{i}\left( \frac{\partial F}{\partial x_{i}}\frac{%
\partial G}{\partial y_{i}}-\frac{\partial F}{\partial y_{i}}\frac{\partial G%
}{\partial x_{i}}\right) +\sum_{i}y_{i}\left( \frac{\partial F}{\partial z}%
\frac{\partial G}{\partial y_{i}}-\frac{\partial F}{\partial y_{i}}\frac{%
\partial G}{\partial z}\right) +F\frac{\partial G}{\partial z}-G\frac{%
\partial F}{\partial z}.  \tag{8}
\end{equation}
\end{proposition}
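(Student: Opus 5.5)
The plan is to reduce the computation to formula (5) of the preceding Proposition, applied to the $1$-homogeneous lift $tG$. First I fix the sign convention for the Poisson bracket on $M^{s}$. Since $i_{X_{f}}\Omega =df$, we have
\begin{equation*}
[f,g]_{\Omega }=\Omega (X_{f},X_{g})=(i_{X_{f}}\Omega )(X_{g})=df(X_{g})=X_{g}(f).
\end{equation*}
Hence, by Definition 5,
\begin{equation*}
\lbrack F,G]_{c}=\frac{1}{t}X_{tG}(tF).
\end{equation*}
The functions $tF$ and $tG$ (with $F,G$ pulled back by $\pi$) satisfy $\mathbb{L}(tF)=tF$ and $\mathbb{L}(tG)=tG$, because $\mathbb{L}=t\,\partial /\partial t$. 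So they are $1$-homogeneous, and the preceding Proposition applies to $f=tG$ with $f/t=G$.

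Next I substitute into formula (5) with $f/t=G$, which gives
\begin{equation*}
X_{tG}=tG_{z}\frac{\partial }{\partial t}+\sum_{i}G_{y_{i}}\frac{\partial }{\partial x_{i}}-\sum_{i}\left( G_{x_{i}}+y_{i}G_{z}\right) \frac{\partial }{\partial y_{i}}+\sum_{i}\left( y_{i}G_{y_{i}}-G\right) \frac{\partial }{\partial z}.
\end{equation*}
I apply this to $tF$. The $\partial /\partial t$ component contributes $tG_{z}F$. Every other component differentiates only $F$, because $F$ does not depend on $t$, and so each picks up an overall factor $t$. After dividing by $t$ I obtain
\begin{equation*}
FG_{z}+\sum_{i}\left( F_{x_{i}}G_{y_{i}}-F_{y_{i}}G_{x_{i}}\right) -\sum_{i}y_{i}G_{z}F_{y_{i}}+\sum_{i}y_{i}G_{y_{i}}F_{z}-GF_{z}.
\end{equation*}

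The last step is to regroup these terms. The two $y_{i}$-terms combine into $\sum_{i}y_{i}(F_{z}G_{y_{i}}-F_{y_{i}}G_{z})$, and the remaining terms give $FG_{z}-GF_{z}$. This is exactly (8).

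The main obstacle is not the algebra but the bookkeeping of signs. The result depends on two choices that must be kept consistent: the order convention $[f,g]_{\Omega }=\Omega (X_{f},X_{g})=X_{g}(f)$, and the Darboux expression $\Omega =dt\wedge dz+\sum_{i}d(-ty_{i})\wedge dx_{i}$ that underlies (5). As a sanity check I would verify one simple case before trusting the general expansion. Taking $F=1$ and $G=z$, formula (8) gives $[1,z]_{c}=1$. The direct computation gives $\frac{1}{t}X_{tz}(t)=\frac{1}{t}\cdot t\cdot 1=1$, coming from the $\partial /\partial t$ coefficient $t(z)_{z}=t$. The two agree, which confirms the sign conventions.
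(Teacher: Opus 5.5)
Your proposal is correct and is essentially the paper's proof: the paper writes $[F,G]_{c}=-\frac{1}{t}X_{tF}(tG)$ and expands using formula (5), while you use the equivalent form $\frac{1}{t}X_{tG}(tF)$. The two agree since $\Omega(X_{f},X_{g})=X_{g}(f)=-X_{f}(g)$, and your term-by-term expansion checks out. One cosmetic point: in your display of $X_{tG}$ the $\partial/\partial z$ coefficient should read $\sum_{i}y_{i}G_{y_{i}}-G$ with $G$ outside the sum, a typo inherited from (5); this is in fact how you use it in the computation.
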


\begin{proof}
By definition, the contact bracket is%
\begin{equation*}
\lbrack F,G]_{c}=\frac{1}{t}[tF,tG]_{\Omega }=-\frac{1}{t}X_{tF}(tG).
\end{equation*}%
A direct computation, using the explicit expression for the Hamiltonian
vector field $X_{tF}$ given in Proposition 9, yields precisely the local
coordinate formula in (8).
\end{proof}

\begin{corollary}
Let $(M_{c},C)$ be a cooriented contact manifold and $(M^{s},\Omega =d\omega
)$ its symplectic cover with Liouville vector field $\mathbb{L}$. Then,

(a) The Hamiltonian vector field $X_{t}$ associated with the $1-$homogeneous
function $t$ is $\pi -$projectable. We define the Reeb vector field $%
\mathcal{R}$ on $(M_{c},C)$ by $\pi _{\ast }(X_{t})=-\mathcal{R}.$

(b) For any $G\in C^{\infty }(M_{c})$, 
\begin{equation*}
\lbrack 1,G]_{c}=\mathcal{R}(G).
\end{equation*}
\end{corollary}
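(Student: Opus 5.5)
For part (a) I would simply specialize Proposition 9 to the $1$-homogeneous function $f=t$. In the cooriented case the global contact form $\eta$ lets us take $t$ as a globally defined coordinate on $M^{s}$, namely $\beta_p = t\,\eta_p$. Clearly $\mathbb{L}(t)=t$, so Proposition 9 applies: $X_t$ is $\pi$-projectable. Equivalently, Proposition 6(b) gives $\mathcal{L}_{X_t}\omega=0$, and Proposition 8 then yields projectability, with projection an infinitesimal contactomorphism.

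To identify the projection concretely, I would insert $f/t=1$ into formula (5). All derivatives of $f/t$ vanish, so the only surviving term is $-(f/t)\,\partial/\partial z = -\partial/\partial z$. This agrees with $X_t=-\partial/\partial z$, already noted in the proof of Proposition 9. Hence $\pi_{\ast}(X_t)=-\partial/\partial z$, and $\mathcal{R}$ is locally $\partial/\partial z$. I would also check the two standard Reeb conditions, $\alpha_U(\mathcal{R})=1$ and $i_{\mathcal{R}}d\alpha_U=0$, which are immediate from $\alpha_U=dz-\sum y_i dx_i$.

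The one point needing care is that $\mathcal{R}$ is well defined globally, independent of the chart. This holds because $X_t$ is intrinsically defined on $M^{s}$ once the global $t$ is fixed. The same check also shows where coorientation is used: without a global $\eta$, the function $t$, and hence $\mathcal{R}$, would only be defined up to the rescaling by $t$.

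For part (b) I would use the definition $[1,G]_c=\frac1t[t,tG]_\Omega=-\frac1t X_t(tG)$, as in the proof of Proposition 10. Since $X_t=-\partial/\partial z$ has no $\partial/\partial t$ component, we get $X_t(tG)=t\,X_t(\pi^{\ast}G)=t\,(\pi_{\ast}X_t)(G)=-t\,\mathcal{R}(G)$. Therefore $[1,G]_c=\mathcal{R}(G)$.

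As a cross-check, formula (8) with $F=1$ leaves only $\partial G/\partial z$, which matches. The only real obstacle is the sign bookkeeping in $[f,g]_\Omega=\Omega(X_f,X_g)=-X_f(g)$, and this is consistent with the convention already used in Proposition 10.
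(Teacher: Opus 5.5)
Your proposal is correct and follows the route the paper intends. The paper states the corollary without a separate proof, as the specialization of Proposition 9 to $f=t$ (so $X_t=-\partial/\partial z$, hence $\mathcal{R}=\partial/\partial z$) and of the computation $[F,G]_c=-\frac{1}{t}X_{tF}(tG)$ from Proposition 10 with $F=1$. One point is worth stating explicitly: the canonical charts must be adapted to the global form $\eta$ (the contact Darboux theorem for the form, not just the structure), so that the global fibre coordinate defined by $\beta_p=t\,\eta_p$ is the same $t$ that appears in formula (5).
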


\smallskip

\noindent \textbf{Hamiltonian vector fields.} Hamiltonian vector fields
constitute one of the central notions in contact geometry and contact
Hamiltonian dynamics. Their introduction can be traced back to the
pioneering work of Lichnerowicz \cite{lich}, where the Lie algebraic
structure of infinitesimal automorphisms of a contact structure was studied
in depth. An overview of these ideas is provided in the work of Libermann
and Marle \cite{LM}. More recent presentations, motivated by applications to
dissipative dynamics, include the work of de Le\'{o}n and Lainz \cite{dLL}
and Bravetti \textit{et al.} \cite{BCT}.

From now on, we will assume that the contact sheaf $C$ of the contact
structure $(M_{c},C)$ is generated by a global section $\alpha .$ We will
then say that $\alpha $ is a contact $1-$form and we will refer to the pair $%
(M_{c},\alpha )$ as a contact manifold. The $1-$form $\alpha $ satisfies 
\begin{equation*}
\ker d\alpha \cap \ker \alpha =\{0\},
\end{equation*}%
in such a way that de restriction of $d\alpha $ to the distribution $\ker
\alpha $ is symplectic. The Reeb vector field of $(M_{c},\alpha )$ is the
unique vector field $\mathcal{R}\in \mathfrak{X}(M_{c})$ such that $\alpha (%
\mathcal{R})=1$ and $i_{\mathcal{R}}(d\alpha )=0.$

For every tangent vector $X\in TM_{c},$ $X\in T_{x}M_{c},$ $x\in M_{c},$ the
expression%
\begin{equation*}
X=\alpha (X)\mathcal{R+(}X\mathcal{-\alpha (}X\mathcal{)R)},
\end{equation*}%
provides the Whitney decomposition of vector bundles%
\begin{equation*}
TM_{c}=\ker d\alpha \oplus \ker \alpha .
\end{equation*}%
Usually the $1-$ rank bundle $\ker d\alpha ,$ is called the \textit{vertical
bundle} and the $2n-$rank bundle $\ker \alpha ,$ \textit{the horizontal
bundle} of the contact structure.

At this point, we introduce the dynamics on the contact system $%
(M_{c},\alpha )$ defined by a smooth function $H:M_{c}\rightarrow \mathbb{R}$%
. The Hamiltonian vector field $X_{H}$ is the unique vector field on $M_{c}$
satisfying the conditions:%
\begin{equation*}
\mathcal{L}_{X_{H}}\alpha =-\rho \alpha ,\text{ }(\rho \in C^{\infty
}(M_{c})),\text{ }\alpha (X_{H})=-H.
\end{equation*}%
It follows directly that $\rho =\mathcal{R}(H).$ Explicitly, in the
canonical coordinates $(z,x_{i},y_{j})$, the vector field $X_{H}~$takes the
form 
\begin{equation*}
X_{H}=\sum_{i=1}^{n}H_{y_{i}}\frac{\partial }{\partial x_{i}}%
-\sum_{i=1}^{n}\left\{ H_{x_{i}}+y_{i}H_{z}\right\} \frac{\partial }{%
\partial y_{i}}+\left( \sum_{i=1}^{n}y_{i}H_{y_{i}}-H\right) \frac{\partial 
}{\partial z}.
\end{equation*}%
Moreover, the Hamiltonian vector field $X_{H}$ 
\begin{equation*}
\mathcal{L}_{X_{H}}(H)=-\mathcal{R}(H)H
\end{equation*}%
and 
\begin{equation*}
\mathcal{L}_{X_{H}}\nu =-(n+1)\mathcal{R}(H)\nu ,
\end{equation*}%
where $\nu =\alpha \wedge (d\alpha )^{n}$ the contact volume element
associated with the contact form $\alpha .$

From now on, the triple $(M_{c},\alpha ,H)$ will be referred to as a contact
Hamiltonian system. We define the contact bracket of two smooth functions $F$
and $G$ on $M_{c}$ (see \cite{dLL}) as 
\begin{equation*}
\lbrack F,G]_{c}=i[X_{F},X_{G}]\alpha .
\end{equation*}%
Expanding this expression, we obtain%
\begin{eqnarray*}
i[X_{F},X_{G}]\alpha &=&\mathcal{L}_{X_{F}}i_{X_{G}}\alpha -i_{X_{G}}%
\mathcal{L}_{X_{F}}\alpha =-\mathcal{L}_{X_{F}}(G)+i_{X_{G}}\mathcal{R(}F%
\mathcal{)\alpha } \\
&=&-X_{F}(G)-G\mathcal{R(}F\mathcal{)}.
\end{eqnarray*}%
A straightforward computation in local canonical coordinates yields the
following explicit expression for the contact bracket:%
\begin{equation*}
\lbrack F,G]_{c}=\sum_{i}\left( \frac{\partial F}{\partial x_{i}}\frac{%
\partial G}{\partial y_{i}}-\frac{\partial F}{\partial y_{i}}\frac{\partial G%
}{\partial x_{i}}\right) +\sum_{i}y_{i}\left( \frac{\partial F}{\partial z}%
\frac{\partial G}{\partial y_{i}}-\frac{\partial F}{\partial y_{i}}\frac{%
\partial G}{\partial z}\right) +F\frac{\partial G}{\partial z}-G\frac{%
\partial F}{\partial z}.
\end{equation*}

This local expression is consistent with that obtained in Proposition 10,
confirming the compatibility between the intrinsic definition of the contact
bracket and its formulation via $1-$homogeneous functions on the symplectic
cover.

\noindent The contact Hamiltonian vector field $X_{F}$ admits the
decomposition 
\begin{equation*}
X_{F}=-F\mathcal{R+E}_{F}
\end{equation*}%
where $\mathcal{E}_{F}$ takes values in the horizontal distribution defined
by the contact structure $\left( M_{c},\alpha \right) $. This decomposition
corresponds to the Whitney splitting of $X_{F}$ distinguishing its vertical
component (along the Reeb field $\mathcal{R)}$ from its horizontal component
(tangent to the contact distribution).

By Proposition 9, the evolution vector field $\mathcal{E}_{F}$ is giving by 
\begin{equation*}
\mathcal{E}_{F}=\pi _{\ast }\left( tX_{F}^{\Omega }\right) ,
\end{equation*}%
where $X_{F}^{\Omega }$ is the Hamiltonian vector field of $F$ on $%
(M^{s},\Omega =d\omega ).$

\noindent \textbf{Remark.} The evolution vector field $\mathcal{E}_{F}$,
plays a fundamental role in modeling processes involving friction, including
isolated thermodynamic systems. For further details, see, for instance, \cite%
{SLLD}.

For $F,G\in C^{\infty }(M),$ we define the reduced contact bracket by%
\begin{equation*}
\lbrack F,G]_{c}^{r}=\mathcal{E}_{F}(G).
\end{equation*}

\begin{proposition}
Let $(M_{c},\alpha ,H)$ be a contact Hamiltonian system. The energy $H$ is
preserved by the flow of its corresponding evolution vector field $\mathcal{E%
}_{H}.$

However, $\mathcal{E}_{H}$ does not preserve the contact volume element $%
\mathcal{V}=\alpha \wedge (d\alpha )^{n}.$ In fact, the Lie derivative of $%
\mathcal{V}$ along $\mathcal{E}_{H}$ satisfies%
\begin{equation}
\mathcal{L}_{\mathcal{E}_{H}}\mathcal{V}=dH\wedge (d\alpha )^{n}-(n+1)%
\mathcal{R}(H)\mathcal{V.}  \tag{9}
\end{equation}%
As a consequence, if $\mathcal{R}(H)=0,$ then $\mathcal{E}_{H}$ does
preserve the contact volume element $\mathcal{V.}$
\end{proposition}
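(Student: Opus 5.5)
The plan is to use the Whitney splitting $X_H=-H\mathcal{R}+\mathcal{E}_H$, that is, $\mathcal{E}_H=X_H+H\mathcal{R}$. This reduces everything to the already stated identities $\mathcal{L}_{X_H}H=-\mathcal{R}(H)H$ and $\mathcal{L}_{X_H}\nu=-(n+1)\mathcal{R}(H)\nu$, together with the defining properties $\alpha(\mathcal{R})=1$, $i_{\mathcal{R}}d\alpha=0$ of the Reeb field.

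\emph{Conservation of energy.} I will compute
\begin{equation*}
\mathcal{E}_H(H)=X_H(H)+H\,\mathcal{R}(H)=-\mathcal{R}(H)H+H\mathcal{R}(H)=0 .
\end{equation*}
So $H$ is a first integral of $\mathcal{E}_H$. Equivalently, $[H,H]^{r}_{c}=0$ for the reduced bracket.

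\emph{Volume.} By linearity of the Lie derivative in the vector field, I split
\begin{equation*}
\mathcal{L}_{\mathcal{E}_H}\mathcal{V}=\mathcal{L}_{X_H}\mathcal{V}+\mathcal{L}_{H\mathcal{R}}\mathcal{V}.
\end{equation*}
The first term is $-(n+1)\mathcal{R}(H)\mathcal{V}$ by the identity quoted above. For the second term I use Cartan's formula. Since $\mathcal{V}$ is a top-degree form, $d\mathcal{V}=0$, and so
\begin{equation*}
\mathcal{L}_{H\mathcal{R}}\mathcal{V}=d\big(H\,i_{\mathcal{R}}\mathcal{V}\big).
\end{equation*}
Because $i_{\mathcal{R}}\alpha=1$ and $i_{\mathcal{R}}(d\alpha)^n=0$ (a consequence of $i_{\mathcal{R}}d\alpha=0$), we get $i_{\mathcal{R}}\mathcal{V}=(d\alpha)^n$. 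Hence
\begin{equation*}
\mathcal{L}_{H\mathcal{R}}\mathcal{V}=d\big(H(d\alpha)^n\big)=dH\wedge(d\alpha)^n ,
\end{equation*}
using that $(d\alpha)^n$ is closed. Adding the two terms gives (9).

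\emph{Consequence.} The only step needing care is showing that the term $dH\wedge(d\alpha)^n$ also vanishes when $\mathcal{R}(H)=0$. To see this, I decompose $dH=\mathcal{R}(H)\alpha+\beta$ with $\beta(\mathcal{R})=0$. For a top form $\mu$ we have $\mu=0$ iff $i_{\mathcal{R}}\mu=0$, since $\mathcal{R}$ is nowhere zero. Now
\begin{equation*}
i_{\mathcal{R}}\big(\beta\wedge(d\alpha)^n\big)=\beta(\mathcal{R})(d\alpha)^n-\beta\wedge i_{\mathcal{R}}(d\alpha)^n=0 ,
\end{equation*}
so $\beta\wedge(d\alpha)^n=0$. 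Therefore $dH\wedge(d\alpha)^n=\mathcal{R}(H)\mathcal{V}$, and (9) becomes $\mathcal{L}_{\mathcal{E}_H}\mathcal{V}=-n\,\mathcal{R}(H)\mathcal{V}$. This vanishes when $\mathcal{R}(H)=0$, and in general it is nonzero, which shows that $\mathcal{V}$ is not preserved.

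The main obstacle I expect is sign bookkeeping: the paper's conventions are $\alpha(X_H)=-H$ and $\pi_*(X_t)=-\mathcal{R}$. I would double-check $\mathcal{E}_H=X_H+H\mathcal{R}$ against the local formulas (6) and the coordinate expression of $X_H$, where the two differ exactly by the $-H\,\partial/\partial z$ term, with $\mathcal{R}=\partial/\partial z$.
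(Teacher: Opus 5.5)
Your proof is correct, and it takes a somewhat different route from the paper's.

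The paper works with $\mathcal{E}_H$ directly. It records $\alpha(\mathcal{E}_H)=0$ and $\mathcal{L}_{\mathcal{E}_H}\alpha=dH-\mathcal{R}(H)\alpha$, and leaves (9) to a direct expansion of $\mathcal{L}_{\mathcal{E}_H}\bigl(\alpha\wedge(d\alpha)^n\bigr)$ from these two relations. In that expansion, differentiating $\alpha$ gives $dH\wedge(d\alpha)^n-\mathcal{R}(H)\mathcal{V}$. The $n$ factors $d\alpha$ give $-n\,\mathcal{R}(H)\mathcal{V}$, because $\alpha\wedge d\mathcal{R}(H)\wedge\alpha=0$.

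You instead split off the vertical part, $\mathcal{E}_H=X_H+H\mathcal{R}$. You reuse the already stated identities $X_H(H)=-\mathcal{R}(H)H$ and $\mathcal{L}_{X_H}\mathcal{V}=-(n+1)\mathcal{R}(H)\mathcal{V}$. You then handle $H\mathcal{R}$ with Cartan's formula and $i_{\mathcal{R}}\mathcal{V}=(d\alpha)^n$. This is more modular and explains the shape of (9): the term $-(n+1)\mathcal{R}(H)\mathcal{V}$ is exactly the contribution of $X_H$, and $dH\wedge(d\alpha)^n$ is the correction from the Reeb component. The paper's version is more self-contained, since it needs nothing about $X_H$ beyond the two relations for $\mathcal{E}_H$.

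Your last step is a real improvement over the paper. Its \emph{as a consequence} silently assumes that $dH\wedge(d\alpha)^n$ vanishes when $\mathcal{R}(H)=0$, which is not visible from (9) as written. Your identity $dH\wedge(d\alpha)^n=\mathcal{R}(H)\mathcal{V}$, obtained by contracting the top form with the nowhere-vanishing $\mathcal{R}$, supplies this. It also reduces (9) to $\mathcal{L}_{\mathcal{E}_H}\mathcal{V}=-n\,\mathcal{R}(H)\mathcal{V}$. So for $n\geq 1$, $\mathcal{E}_H$ preserves $\mathcal{V}$ if and only if $\mathcal{R}(H)\equiv 0$. This is consistent with the coordinate divergence of $\mathcal{E}_H$, which is $-n\,\partial H/\partial z$.
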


\begin{proof}
The first assertion follows directly from the fact that 
\begin{equation*}
\mathcal{L}_{\mathcal{E}_{H}}(H)=\mathcal{E}_{H}(H)=0.
\end{equation*}%
On the other hand, the following relations hold 
\begin{equation*}
\begin{array}{c}
\alpha (\mathcal{E}_{H})=0 \\ 
\mathcal{L}_{\mathcal{E}_{H}}(\alpha )=dH-\mathcal{R}(H)\alpha .%
\end{array}%
\end{equation*}%
A straightforward computation using these relations yields equation (9),
completing the proof.
\end{proof}

\begin{definition}
A submanifold $N\subset M_{c}$ is called isotropic if $\pi ^{-1}(N)$ is an
isotropic submanifold of the symplectic manifold $M^{s}$, and coisotropic if 
$\pi ^{-1}(N)$is coisotropic with respect to the same symplectic structure.
When $N$ is both isotropic and coisotropic, it is called a Legendrian
submanifold of $M_{c}.\medskip $
\end{definition}

\noindent \textbf{Remark.} It is interesting to put in perspective the
general frame of fibered Liouville manifolds, that is, a manifold ~$P$
endowed with a closed $2-$form $\Omega $ without zeros (and no restrictions
on its range) and with a free and regular action $\tau $ of the
multiplicative $\mathbb{R}^{\ast }$ such that%
\begin{equation*}
\tau _{t}^{\ast }\Omega =t\Omega ,\text{ }t\in \mathbb{R}^{\ast }.
\end{equation*}%
If we consider the field $Z,$%
\begin{equation*}
Z_{x}=\left. \frac{d}{dt}\tau _{t}(x)\right\vert _{t=1}
\end{equation*}%
then $L_{Z}\Omega =\Omega ,$ and hence $\Omega =d\left( i_{Z}\Omega \right) $%
.

In this way, the principal bundle $\pi :P\rightarrow P/\tau =M$ establishes
a bijective correspondence between integral submanifolds $Q\subset P$ of $%
\Omega $ that are invariant by $\tau _{t}$ and submanifolds $N$ that are
solutions of the Pfaffian system generated by the projection of the $1-$form 
$i_{Z}\Omega $ (see \cite{LM}).

\begin{proposition}
Let $N\subset M_{c}$ be a submanifold, and let $F_{1},...,F_{k}$ be local
defining functions of $N$ on an open set $U\subset M_{c}.$Let $\mathcal{E}%
_{F_{i}}$ $(1\leq i\leq k)$ denote the corresponding evolution vector
fields. Then%
\begin{eqnarray*}
N\text{ is coisotropic if and only if \ }\left\langle \mathcal{E}%
_{F_{i}}\right\rangle &\subseteq &T(N\cap U) \\
N\text{ is isotropic if an only if }T(N\cap U)\text{ } &\subseteq &\text{\ }%
\left\langle \mathcal{E}_{F_{i}}\right\rangle \\
N\text{ is legendrian if and only if \ }T(N\cap U)\text{ } &=&\text{\ }%
\left\langle \mathcal{E}_{F_{i}}\right\rangle .
\end{eqnarray*}
\end{proposition}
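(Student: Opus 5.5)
Transfer everything to the symplectic cover. The cone $\tilde N=\pi^{-1}(N\cap U)$ is a submanifold of $M^{s}$ cut out by the $\mathbb{L}$-invariant functions $\pi^{*}F_{1},\dots,\pi^{*}F_{k}$. Equivalently, it is cut out by the $1$-homogeneous functions $f_i=tF_i$, since $t\neq 0$ on $M^{s}$. By Definition 12, isotropy, coisotropy and the Legendrian property of $N$ are defined through $\tilde N$. So the task is to describe $(T_{q}\tilde N)^{\bot}$ and then push the three symplectic conditions down by $\pi_*$.

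First, following the remark preceding Definition 1 of the contact structure, the fields $X_{f_1},\dots,X_{f_k}$ give a basis of $(T_q\tilde N)^{\bot}$ along $\tilde N$. Second, identify their projections. Identity (7) gives $X_{tF_i}=tX_{F_i}+F_iX_t$, and $F_i=0$ on $\tilde N$. Hence along $\tilde N$ we have $X_{f_i}=tX^{\Omega}_{F_i}$, and by Proposition 9 $\pi_*(X_{f_i})=\mathcal{E}_{F_i}$. Also note $\mathbb{L}=t\partial_t$ is tangent to $\tilde N$ and spans $\ker\pi_*$, so $T_q\tilde N=\pi_*^{-1}(T_{\pi q}N)$. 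Moreover $\Omega(\mathbb{L},X_{f_i})=-X_{f_i}$-type pairing equals $\pm\omega(X_{f_i})=\mp\mathbb{L}f_i=\mp f_i=0$ on $\tilde N$ (Proposition 6(a)). Therefore $\mathbb{L}\in(T\tilde N)^{\bot\bot}$ is consistent, and $(T\tilde N)^{\bot}\cap\ker\pi_*=0$ unless $\mathbb{L}$ itself lies in $(T\tilde N)^\bot$.

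Then translate each condition. For coisotropy, $(T\tilde N)^\bot\subset T\tilde N$ holds iff each $X_{f_i}$ is tangent to $\tilde N$. Since $T\tilde N$ is the full $\pi_*$-preimage, this holds iff $\mathcal{E}_{F_i}$ is tangent to $N$. For isotropy, $T\tilde N\subset(T\tilde N)^\bot=\langle X_{f_i}\rangle$, and projecting gives $TN\subset\langle\mathcal{E}_{F_i}\rangle$. For the converse I need to lift a vector of $T\tilde N$, which is a combination of a lift of $\sum c_i\mathcal{E}_{F_i}$ and a multiple of $\mathbb{L}$, back into $\langle X_{f_i}\rangle$. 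The Legendrian case is the conjunction of the two.

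The main obstacle is exactly this role of the vertical direction $\mathbb{L}$. Since $\mathbb{L}\in T\tilde N$, isotropy of $\tilde N$ forces $\mathbb{L}\in(T\tilde N)^\bot$, i.e.\ $\omega|_{T\tilde N}=0$ and hence $\alpha|_{TN}=0$. For the converse I also need $\mathbb{L}\in\langle X_{f_i}\rangle$, meaning some combination $\sum c_iF_i$ has a Hamiltonian field equal to $\mathbb{L}$ modulo the others. I would try to get this from $TN\subset\langle\mathcal{E}_{F_i}\rangle\subset\ker\alpha$ together with the fact that the Reeb component $-F_i\mathcal{R}$ of $X_{F_i}$ vanishes on $N$. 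Failing that, I would read the isotropic statement as including the Legendre-type condition $\alpha|_{TN}=0$, as in the fibered-Liouville remark, and check the dimension count $\dim\tilde N=\dim N+1$ against $k=\operatorname{codim}N$ to close the equality case.
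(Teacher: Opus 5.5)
Your reduction to the cone $\tilde N=\pi^{-1}(N\cap U)$ is the same as the paper's. The fields $X_{tF_i}$ span $(T\tilde N)^{\perp}$ and project to $\mathcal{E}_{F_i}$ along $N$. Because $\ker\pi_*=\langle\mathbb{L}\rangle\subseteq T\tilde N$, the coisotropic equivalence and the ``only if'' half of the isotropic equivalence follow by projecting. The genuine gap is the ``if'' half of the isotropic statement, and hence of the Legendrian one. You correctly locate the difficulty in showing that $\mathbb{L}$ lies in $\langle X_{tF_i}\rangle$. But you only say you ``would try'' to obtain this, and your fallback is to reinterpret the statement, which is not a proof. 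The paper's own sufficiency argument passes over the same point: it appeals to $\partial/\partial t$ being tangent to $\tilde N$, which is not what is needed. So your diagnosis is sharper than the paper's text, but the step still has to be closed.

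The missing step is short and uses only facts you already wrote down.
\begin{itemize}
\item Since $\alpha(\mathcal{E}_F)=0$ for every $F$, the hypothesis $TN\subseteq\langle\mathcal{E}_{F_i}\rangle$ gives $\alpha|_{TN}=0$.
\item For $q\in\tilde N$ and $v\in T_q\tilde N$ one then has $\Omega(\mathbb{L},v)=\omega(v)=t\,\alpha(\pi_*v)=0$. Hence $\mathbb{L}_q\in(T_q\tilde N)^{\perp}=\langle X_{tF_i}(q)\rangle$. This is a pointwise linear-algebra fact; you do not need a function whose Hamiltonian field equals $\mathbb{L}$ as a vector field.
\item Any $w\in T_q\tilde N$ satisfies $\pi_*w=\sum_i c_i\mathcal{E}_{F_i}=\pi_*\bigl(\sum_i c_iX_{tF_i}\bigr)$. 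Therefore $w-\sum_i c_iX_{tF_i}\in\ker\pi_*=\langle\mathbb{L}_q\rangle\subseteq\langle X_{tF_i}(q)\rangle$, and so $T_q\tilde N\subseteq(T_q\tilde N)^{\perp}$.
\end{itemize}
Equivalently and more briefly: $\omega|_{\tilde N}=t\,\pi^*(\alpha|_N)=0$ forces $\Omega|_{\tilde N}=d(\omega|_{\tilde N})=0$. The Legendrian case is then simply the conjunction of the two equivalences, and no separate dimension count is needed.
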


\begin{proof}
Consider the functions $f_{\alpha }=tF_{\alpha }$ $($for $1\leq \alpha \leq
m)$ as local generators of $\pi ^{-1}(N\cap U)$ on $\pi ^{-1}(U).$ By
Proposition 9, the corresponding Hamiltonian vector fields with respect to
the symplectic structure $(M^{s},\Omega )$ are given by%
\begin{equation}
X_{f_{\alpha }}=t\frac{\partial F_{\alpha }}{\partial z}\frac{\partial }{%
\partial t}+\sum_{i=1}^{n}\frac{\partial F_{\alpha }}{\partial y_{i}}\frac{%
\partial }{\partial x_{i}}-\sum_{i=1}^{n}\left\{ \frac{\partial F_{\alpha }}{%
\partial x_{i}}+y_{i}\frac{\partial F_{\alpha }}{\partial z}\right\} \frac{%
\partial }{\partial y_{i}}+\sum_{i=1}^{n}\left\{ y_{i}\frac{\partial
F_{\alpha }}{\partial y_{i}}-F_{\alpha }\right\} \frac{\partial }{\partial z}%
,\quad  \tag{10}
\end{equation}%
\linebreak for $1\leq \alpha \leq m.$

At every point $q\in \pi ^{-1}(N\cap U),$ these vector fields generate the
orthogonal subspace $\left( T_{q}(\pi ^{-1}(N\cap U))\right) ^{\bot _{\Omega
}}.$ Taking into account the relative positions of the spaces $T_{q}(\pi
^{-1}(N\cap U))$ and $\left( T_{q}(\pi ^{-1}(N\cap U))\right) ^{\bot
_{\Omega }}$ according on whether $N$ $\ $is isotropic or coisotropic and
noting that the projections by $\pi $ of the vector fields in (10), when
restricted to $N\cap U,$ coincide with the evolution vector fields $\mathcal{%
E}_{F_{i}}$ $($for $1\leq \alpha \leq m),$ the necessity of the conditions
in the statement follows.

To establish sufficiency, note first that the vector field $\partial
/\partial t$ is everywhere tangent to $T(\pi ^{-1}(N\cap U)).$ Therefore, in
order to verify the inclusion 
\begin{equation*}
\left( T_{q}(\pi ^{-1}(N\cap U))\right) ^{\bot _{\Omega }}\subset T(\pi
^{-1}(N\cap U))
\end{equation*}%
it is enough to check that the Hamiltonian vector fields $\left\langle
X_{f_{\alpha }}\right\rangle $ which generate the symplectic orthogonal
subspace, are tangent to $T(\pi ^{-1}(N\cap U))$.

Similarly, in the isotropic case, to verify that $T(\pi ^{-1}(N\cap
U))\subseteq \left\langle X_{f_{\alpha }}\right\rangle ,$ it suffices to
check that $T(\pi ^{-1}(N\cap U))\subseteq \left\langle \mathcal{E}%
_{F_{i}}\right\rangle ,$ since the vertical direction $\partial /\partial t$
is already contained in $T(\pi ^{-1}(N\cap U)).$\smallskip
\end{proof}

\noindent \textbf{Remark. }In this context, it is worth considering
deformation-theoretic aspects of contact geometry. In particular, rigidity
properties of integral coisotropic submanifolds have been established in 
\cite{T}, indicating that their infinitesimal deformations are strongly
constrained by the ambient contact structure.

\begin{proposition}
The sheaf of ideals of a coisotropic manifold $N\subset M_{c}$ is stable by
the reduced contact Bracket.
\end{proposition}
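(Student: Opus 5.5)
We have to show: if $F,G$ both vanish on $N$, then $[F,G]_c^r=\mathcal{E}_F(G)$ vanishes on $N$ as well. The plan is to read this directly off the preceding proposition characterizing coisotropic submanifolds through evolution vector fields. No separate bracket computation on the symplectic cover should be needed.

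The first step is to fix the meaning of "the sheaf of ideals". For each open set $U\subset M_c$, let $\mathcal{I}_N(U)$ be the ideal of smooth functions on $U$ vanishing on $N\cap U$. The claim is that $F,G\in\mathcal{I}_N(U)$ implies $\mathcal{E}_F(G)\in\mathcal{I}_N(U)$.

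Since the statement is local, we shrink $U$ so that $N\cap U$ is the common zero set of independent defining functions $F_1,\dots,F_k$. By Hadamard's lemma we then write
\begin{equation*}
F=\sum_\alpha a_\alpha F_\alpha, \qquad G=\sum_\beta b_\beta F_\beta .
\end{equation*}
The map $F\mapsto \mathcal{E}_F$ is $\mathbb{R}$-linear. From the coordinate expression (6), in which only first derivatives of $F$ occur, it also satisfies
\begin{equation*}
\mathcal{E}_{aF_\alpha}=a\,\mathcal{E}_{F_\alpha}+F_\alpha\,\mathcal{E}'_a ,
\end{equation*}
where $\mathcal{E}'_a$ denotes the homogeneous (no zero-order term) part of (6) applied to $a$. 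Consequently, on $N\cap U$, where $F_\alpha=0$, we get $\mathcal{E}_F=\sum_\alpha a_\alpha\,\mathcal{E}_{F_\alpha}$. Checking that the expression (6) really is a first-order derivation in $F$ is the key bookkeeping point. In (6) the coefficients involve only $F_{y_i}$, $F_{x_i}$ and $F_z$, with no undifferentiated $F$, so this should go through cleanly.

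Next we use the coisotropy of $N$. By the preceding proposition, $\langle\mathcal{E}_{F_\alpha}\rangle\subseteq T(N\cap U)$, so at every point of $N\cap U$ the vector $\mathcal{E}_F$ is a combination of vectors tangent to $N$, and hence is itself tangent to $N$. Any function $G$ vanishing on $N$ has derivative zero along vectors tangent to $N$, so $\mathcal{E}_F(G)=0$ on $N\cap U$, which is the claim.

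The only possible obstacle is the dependence on the choice of local defining functions. This is harmless, since the argument uses only that $F$ and $G$ vanish on $N$ and that $\mathcal{E}_F$ restricted to $N$ is tangent to $N$. Both properties are independent of the chosen generators, so the local statements glue to the sheaf level.
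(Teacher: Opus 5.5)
Your proof is correct, but it takes a different route from the paper's.

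\textbf{The paper's route.} It never chooses generators. It lifts to the symplectic cover and writes $[F,G]_c^r|_N=\pi_*(tX_F^\Omega)(G)|_N=tX_F^\Omega(G)|_{\pi^{-1}(N)}$. It then invokes the standard symplectic fact that the vanishing ideal of the coisotropic submanifold $\pi^{-1}(N)\subset M^s$ is closed under the Poisson bracket. That fact is applied directly to the pullbacks of arbitrary $F,G$ vanishing on $N$.

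\textbf{Your route.} You stay on $M_c$ and use only the necessity direction of the preceding proposition, namely $\langle\mathcal{E}_{F_\alpha}\rangle\subseteq T(N\cap U)$ for chosen defining functions. This forces you to add the Hadamard reduction and the observation that $\mathcal{E}_F|_N$ depends only on $dF|_N$. That step is sound. In fact (6) has no zero-order term at all, so your $\mathcal{E}'_a$ is just $\mathcal{E}_a$, and the identity is the plain Leibniz rule $\mathcal{E}_{aF_\alpha}=a\,\mathcal{E}_{F_\alpha}+F_\alpha\,\mathcal{E}_a$. You could even replace Hadamard's lemma by pointwise linear algebra. For $p\in N$ the covector $d_pF$ lies in the span of the $d_pF_\alpha$, and $\mathcal{E}_F(p)$ depends linearly on $d_pF$.

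\textbf{What each approach buys.} The paper's argument is shorter and generator-free. However, it leans on the external Poisson-ideal characterization of coisotropy and on identifying functions on $M_c$ with $\mathbb{L}$-invariant functions on $M^s$. Yours is intrinsic to $M_c$ and makes explicit why only first-order data of $F$ along $N$ matters. It also proves the slightly stronger statement that $\mathcal{E}_F$ is tangent to $N$ for every $F$ vanishing on $N$. Both ultimately rest on the same mechanism: Hamiltonian vector fields of functions in the ideal lie in $(T\pi^{-1}(N))^{\perp_\Omega}\subset T\pi^{-1}(N)$. This is because the proposition you invoke was itself proved on the symplectic cover.
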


\begin{proof}
Let $F,G\in C^{\infty }(M_{c})$ be such that $F|_{N}=0$ and $G|_{N}=0$. By
definition of the reduced contact bracket, 
\begin{equation*}
\lbrack F,G]_{c}^{r}=E_{F}(G).
\end{equation*}%
Using the relation $E_{F}=\pi _{\ast }(tX_{F}^{\Omega })$, we obtain 
\begin{equation*}
\lbrack F,G]_{c}^{r}\big|_{N}=\pi _{\ast }(tX_{F}^{\Omega })(G)\big|%
_{N}=tX_{F}^{\Omega }(G)\big|_{\pi ^{-1}(N)}.
\end{equation*}%
Since $N$ is coisotropic, its inverse image $\pi ^{-1}(N)\subset M^{s}$ is a
coisotropic submanifold of the symplectic manifold $(M^{s},\Omega )$.
Therefore, the set of functions vanishing on $\pi ^{-1}(N)$ is stable under
the Poisson bracket, and we have 
\begin{equation*}
X_{F}^{\Omega }(G)\big|_{\pi ^{-1}(N)}=[F,G]_{\Omega }\big|_{\pi ^{-1}(N)}=0,
\end{equation*}%
where $[\cdot ,\cdot ]_{\Omega }$ denotes, as usual, the Poisson bracket
induced by $\Omega $. It follows that 
\begin{equation*}
\lbrack F,G]_{c}^{r}\big|_{N}=0,
\end{equation*}%
which proves the claim.

\smallskip
\end{proof}

\noindent \textbf{Remark. }Let $N\subset M_{c}$ be a submanifold. De Le\'{o}%
n and Lainz \cite{dLL} define $x\in N$ as horizontal point if $%
T_{x}N=T_{x}N\cap \mathcal{H}_{x},$ that is, $\left. \alpha \right\vert
T_{x}N=0.$ Here we will say that $p\in N$ is an ordinary point if it is not
horizontal.

Given a Pfaff system $\mathcal{F}$ on $N,$ its characteristic system $C(%
\mathcal{F})\subset \mathcal{H}$ is the set of fields on $N$, horizontal for
the contact structure, that stabilize the system $\mathcal{F}:$%
\begin{equation*}
C(\mathcal{F})=\{X\in T_{p}N\cap \mathcal{H}_{p}:L_{X}w\in \mathcal{F}_{p},%
\text{ with }w_{p}\in \mathcal{F}_{p},\text{ }p\in N\}.
\end{equation*}%
Now consider a coisotropic submanifold $N\subset M_{c}$ and let $\left\{
F_{1},...,F_{m}\right\} $ be a set of smooth generating functions for the
sheaf of ideals of $N$ on a local chart $U\subset M_{c}$ that contains no
horizontal points.

In fact, we can assume that $U\subset M_{c}$ is diffeomorphic to an open set
of $\mathbb{R}^{2n+1}$ coordinate by $x_{1},...,x_{n},z,y_{1},...,y_{n,\text{
}}$ in which the contact form is written as $\alpha =dz-\sum y_{i}dx_{i}.$
Let us consider on $U$ the Pfaff system%
\begin{equation*}
(\alpha ,dF_{1},...,dF_{m}),
\end{equation*}%
which we assume to be linearly independent at every point. This condition is
equivalent to the linear independence of the corresponding evolution vector
fields $\mathcal{E}_{F_{1}},...,\mathcal{E}_{F_{m}}$.

To say that a vector field $X\in \mathcal{H}$ preserves the Pfaffian system $%
(\alpha ,dF_{1})$ means, by direct computation, $X$ must be proportional to
the vector field%
\begin{equation*}
\mathcal{E}_{F_{1}}=\sum_{i=1}^{n}F_{1y_{i}}\frac{\partial }{\partial x_{i}}%
-\sum_{i=1}^{n}\left\{ F_{1x_{i}}+y_{i}F_{1z}\right\} \frac{\partial }{%
\partial y_{i}}+\sum_{i=1}^{n}y_{i}F_{1y_{i}}\frac{\partial }{\partial z}.
\end{equation*}%
This follows from the identity $L_{\mathcal{E}_{F_{1}}}\alpha
=dF_{1}-F_{1z}\alpha .$ In this way, $\mathcal{E}_{F_{1}}\in C(\alpha
,dF_{1})$ and in fact $\mathcal{E}_{F_{1}}\in C(w,dF_{1},...,dF_{m}),$ since
by the coisotropic condition on $N$, we have $\mathcal{E}_{F_{1}}(F_{j})=0,$ 
$j=1,...,m.$ Since this argument is constructive, we have%
\begin{equation*}
C(\alpha ,dF_{1},...,dF_{m})=\left\langle \mathcal{E}_{F_{1}},...,\mathcal{E}%
_{F_{m}}\right\rangle .
\end{equation*}%
The vector fields $\mathcal{E}_{F_{1}},...,\mathcal{E}_{F_{m}}$ are linearly
independent if and only if, as one can verify, the $1-$forms $\alpha
,dF_{1},...,dF_{m}$ are linearly independent on $U$. Moreover, the
restriction of the vector fields $\{\mathcal{E}_{F_{i}}\}$ $(i=1,...,m),$
generates the characteristic system of the Pfaff system defined by the
specializatio $\overline{\alpha }$ of the contact form to $U\cap N$.
Consequently, if $U\cap N$ consists of ordinary points, the characteristic
system of the Pfaff system $\left. \overline{\alpha }\right\vert _{U}$ has
rank $m.$

Thus, since $\dim N=2n+1-m$, in a sufficiently small neighborhood of $N$
---which we continue to denote by $U,$ ---by the very framework of Darboux
theorem, there exist $2(n-m)+1$ functions $\delta ,\beta _{i},\gamma _{i}$ $%
(i=1,...,n-m)$ on $U$ such that the form $\overline{\alpha }$ in $U$ is
expressed as%
\begin{equation*}
d\delta -\sum_{i=1}^{n-m}\beta _{i}d\gamma _{i}.
\end{equation*}%
In this way, $\delta =k,$ $\gamma _{i}=k_{i}$ for arbitrary constants $%
k,k_{i},$ $(i=1,...,n-m).$ provide a Legendrian foliation in $V$ of the
coisotropic manifold $N$ (see \cite{KT} or \cite{L} for discussions of
Legendre foliations in the context of contact complete integrability).

\section{Hamilton-Jacobi theory}

In the standard setting of classical mechanics, a Lagrangian system is
defined by a function $L$ on the tangent bundle $TQ$ of the configuration
manifold, governing its dynamics though the Euler-Lagrange equations.
However, this requirement can be overly restrictive: certain physical
situations or constraints lie beyond the reach of the Lagrangian framework.
A notable example occurs when generalized forces cannot be derived from a
potential, in which case no suitable Lagrangian exists and the action
principle cannot be applied.

In this context, a notable advancement in recent years has enabled the
extension of the Lagrangian framework to dissipative mechanical systems, as
proposed in \cite{LLa}. This result makes the formalism relevant to new
problems and reveals new geometric structures of physical interest \cite%
{GGMRRx}. For Lagrangian approach to dissipative dynamics, see \cite{Ga,MR},
while the geometric and thermodynamic foundations of dissipation are
discussed in \cite{Raz}.

The concept of a dissipative Hamiltonian $H$ is formulated on the extended
phase space $\left( T^{\ast }Q\times \mathbb{R},\eta \right) $ equipped with
its canonical contact form $\eta .$ In this framework, contact geometry
emerges as a fundamental tool for the description of mechanical systems with
dissipation, offering a natural extension of symplectic methods to settings
where energy is no longer conserved. This geometric perspective unifies
conservative and dissipative dynamics and also provides a rigorous
foundation studying integrability, stability, and variational principles in
non-conservative contexts. Notable paradigmatic examples of this approach
can be found in \cite{Brr,BCT,dLL,GGMRRx}.

Particularly striking interpretations of contact geometry, where contact
transformations describe thermodynamic processes, contact Hamiltonians serve
as thermodynamic potentials, and Legendre submanifolds arise as attractors
of contact Hamiltonian vector fields, thereby representing equilibrium
states in thermodynamic variables, are detailed, for instance, in \cite%
{Br,Go,Gr,Mr2}.

Motivated by these developments, recent advances have extended
Hamilton--Jacobi theory to contact Hamiltonian systems, establishing a
geometric framework for the associated equations and analyzing their
integrability conditions.

Before proceeding, it is essential to recall the Hamilton--Jacobi problem in
its classical geometric setting.

Within the symplectic framework of Hamiltonian dynamical systems, the main
objective is to characterize Lagrangian submanifolds $L\subset T^{\ast }Q$,
that is, $n-$dimensional subvarieties of $T^{\ast }Q$ on which the canonical
symplectic form $w_{2}$ vanishes, such that the Hamiltonian vector field $%
X_{H}\in \mathfrak{X}(T^{\ast }Q)$ is tangent to $L.$ This geometric
formulation relies on the following fundamental result.

\begin{theorem}
Let \ $(X,\omega _{2})$ be a symplectic manifold, let us consider the
dynamic equation%
\begin{equation*}
i_{Z_{H}}\omega _{2}=dH,
\end{equation*}%
and let $L\subset X$ be a Lagrangian submanifold. Then the vector field $%
Z_{H}$ is tangent to $L$ if and only if $\left. H\right\vert _{L}$ is
constant.
\end{theorem}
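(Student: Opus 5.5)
The proof is pointwise linear algebra in $T_pX$, using $T_pL=(T_pL)^{\bot}$ at every $p\in L$. This equality is exactly the Lagrangian condition recalled earlier in this section. Throughout, $\left. H\right\vert _{L}$ constant is read as constant on each connected component of $L$, which is all the argument can give unless $L$ is connected. The equivalence to be shown is therefore that $Z_{H}(p)\in T_pL$ for all $p\in L$ if and only if $d_pH$ vanishes on $T_pL$ for all $p\in L$, i.e. $d(\left. H\right\vert _{L})=0$.

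For the forward implication, suppose $Z_{H}$ is tangent to $L$. Fix $p\in L$ and $v\in T_pL$. The dynamic equation gives $d_pH(v)=\omega _{2}(Z_{H}(p),v)$. Both $Z_H(p)$ and $v$ lie in $T_pL$, and $T_pL\subset (T_pL)^{\bot}$ because $L$ is isotropic, so this quantity vanishes. Hence $d(\left. H\right\vert _{L})=0$ and $H$ is locally constant on $L$.

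For the converse, suppose $\left. H\right\vert _{L}$ is constant. Then $d_pH(v)=0$ for every $v\in T_pL$. By the dynamic equation this says $\omega_2(Z_H(p),v)=0$ for all $v\in T_pL$, that is, $Z_H(p)\in (T_pL)^{\bot}$. The coisotropic half of the Lagrangian condition, $(T_pL)^{\bot}\subset T_pL$, now gives $Z_H(p)\in T_pL$.

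There is no real obstacle. The only points needing care are the component-wise reading of "constant" noted above, and the fact that the converse genuinely uses maximality ($\dim L=n$, hence coisotropy), not isotropy alone.
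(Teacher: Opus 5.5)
Your proof is correct and takes essentially the same approach as the paper. The paper states this classical theorem without proof and cites the literature, but in part (c) of its Liouville version it uses your argument: if $Z_H$ is tangent to $L$, isotropy gives $dH|_{TL}=0$, and if $H|_L$ is constant, then $Z_H\in (TL)^{\perp}\subset TL$ by the Lagrangian condition. Your componentwise reading of ``constant'' when $L$ is disconnected matches the ``mod const.\ on each connected component'' qualification the paper adds in that version.
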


For a comprehensive understanding of this result, readers may consult both
theoretical and applied references. Foundational aspects are rigorously
addressed in the seminal work of \cite{AM}, while modern geometric
perspectives are developed in \cite{LMV}. Of particular relevance is the
treatment presented in \cite{CGMMMR}, which provides critical insights into
the geometric foundations of this topic.

\bigskip

\begin{definition}
Let us now consider the symplectic cover $(M^{s},\Omega =d\omega )$ of the
contact structure $(M_{c},\eta )$. A submanifold $L\subset M^{s}$ is called
a Liouville submanifold if it is invariant under the flow of the Liouville
vector field $\mathbb{L}.$
\end{definition}

\begin{theorem}[Hamilton-Jacobi for Liouville submanifolds]
Let $H$ be a $1-$homogeneous smooth function on $M^{s}$, that is, $\mathbb{L}%
(H)=H.$ Consider the Hamilton equation%
\begin{equation}
i_{Z_{H}}\Omega =dH.  \tag{11}
\end{equation}%
where $Z_{H}$ is the Hamiltonian vector field associated to $H.$ Let $%
L\subset M^{s}$ be a Liouville Lagrangian submanifold.
\end{theorem}

Then,

(a) $H\equiv -\omega (Z_{H})\func{mod}const.$ on each connected component of 
$M^{s}.$

(b) $\left. \omega \right\vert _{L}=0.$

(c) \ $Z_{H}\in \mathfrak{X}(L)\Leftrightarrow \left. H\right\vert _{L}=0%
\func{mod}$ const. on each connected component of $M^{s}.$

\begin{proof}
(a) Since by Proposition 6, $\mathcal{L}_{Z_{H}}\omega =0,$ we have%
\begin{equation*}
i_{Z_{H}}d\omega +di_{Z_{H}}\omega =0
\end{equation*}%
consequently%
\begin{equation*}
dH=-d\left[ \omega (Z_{H})\right] .
\end{equation*}%
(b) Since $L$ is a Lagrangian submanifold and the Liouville vector field $%
\mathbb{L}$ is tangent to $L,$ we have $\left. \omega \right\vert
_{L}=\left. i_{\mathbb{L}}\Omega \right\vert _{L}=0.$

\noindent (c) If $Z_{H}\in \mathfrak{X}(L)$ it follows from (11) that $%
\left. H\right\vert _{L}\equiv $ const. Conversely, if $\left. H\right\vert
_{L}\equiv $ const., then by (11) we have $Z_{H}\in \mathfrak{X}(L)^{\bot }$
and hence $Z_{H}\in \mathfrak{X}(L)$ by the Lagrangian condition on $%
L.\medskip $
\end{proof}

A parallel geometric framework, bridging the contact setting with the
classical Theorem 16, is precisely formulated in the following theorem.

\begin{theorem}
Let $(M_{c},\eta ,H)$ be a contact Hamiltonian system, and let $\mathcal{E}%
_{H}$ denote the evolution vector field determined by the contact structure $%
\eta $ and the Hamiltonian function $H$. For a Legendrian submanifold $%
L\subset M_{c}$, the following conditions are equivalent:

\noindent (i) The vector field $\mathcal{E}_{H}$ is tangent to $L$.

\noindent (ii) The restriction of $H$ to $L$ is constant.
\end{theorem}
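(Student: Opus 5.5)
The plan is to lift the problem to the symplectic cover and apply the Hamilton--Jacobi theorem for Liouville submanifolds (Theorem 16). We then project back using the identity $\mathcal{E}_H=\pi_\ast(tX^{\Omega}_{H})$. Here $X^{\Omega}_H$ denotes the Hamiltonian vector field, on $(M^s,\Omega)$, of the $1$-homogeneous lift $f=tH$.

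Set $\widetilde L=\pi^{-1}(L)\subset M^s$. By Definition 12, $\widetilde L$ is Lagrangian because $L$ is Legendrian. It is a Liouville submanifold because it is a union of fibres of $\pi$, so $\mathbb L=t\,\partial/\partial t$ is tangent to it. Theorem 16 applied to $f=tH$ (which satisfies $\mathbb L(f)=f$) then says that $X_f$ is tangent to $\widetilde L$ if and only if $f|_{\widetilde L}$ is constant on connected components. Since $f$ is $1$-homogeneous and $\widetilde L$ contains whole $\mathbb R^\ast$-orbits, that constant must be $0$.

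The link with the contact data comes from identity (7) and formula (5):
\[
X_{tH}=tX_H^{\Omega}+H X_t,\qquad X_t=-\partial/\partial z .
\]
Projecting gives $\pi_\ast X_{tH}=\mathcal{E}_H - H\,\partial_z$, which is the contact vector field $X_H=-H\mathcal R+\mathcal E_H$. The $\partial_t$ component $tH_z\partial_t$ is automatically tangent to $\widetilde L$. Along $L$, the Legendrian condition gives $\alpha|_{TL}=0$, while $\alpha(\mathcal E_H)=0$ and $\alpha(\mathcal R)=1$. So $X_H=-H\mathcal R+\mathcal E_H$ is tangent to $L$ exactly when $H|_L=0$ and $\mathcal E_H$ is tangent to $L$.

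For (ii)$\Rightarrow$(i), suppose $H|_L=c$. Replace $H$ by $H-c$; this leaves $\mathcal E_H$ unchanged, since $\mathcal E_H$ depends only on $dH$ by formula (6). Then $tH$ vanishes on $\widetilde L$, so $X_{tH}$ is tangent to $\widetilde L$ by Theorem 16(c), hence $X_H$ is tangent to $L$, and by the $\alpha$-splitting above, $\mathcal E_H$ is tangent to $L$.

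For (i)$\Rightarrow$(ii), a direct argument is simpler. By Proposition 11, $\mathcal{L}_{\mathcal E_H}\alpha=dH-\mathcal R(H)\alpha$. For $v\in TL$, using $\alpha|_{TL}=0$, $\alpha(\mathcal E_H)=0$ and Cartan's formula,
\[
dH(v)=(\mathcal L_{\mathcal E_H}\alpha)(v)=d\alpha(\mathcal E_H,v).
\]
This vanishes because $TL$ is a Lagrangian subspace of $(\ker\alpha,d\alpha)$ and $\mathcal E_H\in TL$. Hence $dH|_{TL}=0$, and $H$ is locally constant on $L$. Conversely, the same identity reproves (ii)$\Rightarrow$(i): if $dH|_{TL}=0$, then $d\alpha(\mathcal E_H,TL)=0$, and maximality of the isotropic subspace $TL$ in $\ker\alpha$ forces $\mathcal E_H\in TL$.

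The main obstacle is justifying that a Legendrian $L$ in the sense of Definition 12 satisfies $\alpha|_{TL}=0$ with $\dim L=n$, so that $TL$ is Lagrangian in $\ker\alpha$. The plan is to derive this from $\omega|_{\widetilde L}=i_{\mathbb L}\Omega|_{\widetilde L}=0$ (Theorem 16(b)) together with $\omega=t\alpha$. Alternatively, one can invoke Proposition 13, which gives $TL=\langle\mathcal E_{F_i}\rangle$ and hence horizontality.
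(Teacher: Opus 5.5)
Your proof is correct, but it is organized differently from the paper's.

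\textbf{The paper's route.} The paper never uses the $1$-homogeneous lift. It applies the classical Hamilton--Jacobi theorem on $(M^{s},\Omega)$ directly to the degree-$0$ function $\pi^{\ast}H$ and the Lagrangian submanifold $\pi^{-1}(L)$, in both directions. Since $tX^{\Omega}_{H}$ differs from $X^{\Omega}_{H}$ only by the nonvanishing factor $t$ and projects to $\mathcal{E}_{H}$, tangency of $X^{\Omega}_{H}$ to $\pi^{-1}(L)$ is equivalent to tangency of $\mathcal{E}_{H}$ to $L$. Likewise, $\pi^{\ast}H$ is constant on $\pi^{-1}(L)$ if and only if $H$ is constant on $L$. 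So the paper needs no normalization of the level set and never uses $\alpha|_{TL}=0$ explicitly.

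\textbf{Your route.} You work with $tH$, whose Hamiltonian field projects to the contact field $X_{H}=-H\mathcal{R}+\mathcal{E}_{H}$. The Hamilton--Jacobi theorem for Liouville submanifolds then only detects the zero level. Hence you need the shift $H\mapsto H-c$, which is harmless because $\mathcal{E}_{H}$ depends only on $dH$. You also need horizontality $\alpha|_{TL}=0$ to strip off the Reeb component.

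\textbf{What your route buys.} It makes explicit how the two contact Hamilton--Jacobi statements relate: $X_{H}$ is tangent to a Legendrian $L$ if and only if $H|_{L}=0$, whereas $\mathcal{E}_{H}$ is tangent if and only if $H|_{L}$ is constant. Moreover, your identity $i_{\mathcal{E}_{H}}d\alpha=dH-\mathcal{R}(H)\alpha$, combined with $T_{p}L$ being Lagrangian in $(\ker\alpha_{p},d\alpha_{p})$, gives a self-contained proof of both implications on $M_{c}$, with no cover needed.

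Your resolution of the anticipated obstacle is sound. You have $\omega|_{\pi^{-1}(L)}=i_{\mathbb{L}}\Omega|_{\pi^{-1}(L)}=0$ and $\omega=t\,\pi^{\ast}\alpha$ with $t\neq 0$. Together with surjectivity of $\pi_{\ast}\colon T\pi^{-1}(L)\to TL$ and $\dim\pi^{-1}(L)=n+1$, this gives $\alpha|_{TL}=0$ and $\dim L=n$.

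One notational slip: in your first paragraph you call $X^{\Omega}_{H}$ the Hamiltonian field of $tH$. For $\mathcal{E}_{H}=\pi_{\ast}(tX^{\Omega}_{H})$ and $X_{tH}=tX^{\Omega}_{H}+HX_{t}$ to hold, it must be the Hamiltonian field of $\pi^{\ast}H$ itself; indeed $tX_{tH}$ is not even $\pi$-projectable. You use it in the correct sense afterwards, so nothing breaks.
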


\begin{proof}
Consider the symplectic covering $\pi \colon (M^{s},\Omega )\rightarrow
(M_{c},\eta )$. If the Hamiltonian function $H$ is constant on $L$, then it
is also constant on $\pi ^{-1}(L)$ when considered as a function on $M^{s}$.
Consequently, by the classical Hamilton--Jacobi theorem, the Hamiltonian
vector field $X_{H}^{\Omega }$ is tangent to $\pi ^{-1}(L)$. By
Proposition~9, the vector field $t\,X_{H}^{\Omega }$ is $\pi $-projectable
and satisfies 
\begin{equation*}
\pi _{\ast }(t\,X_{H}^{\Omega })=\mathcal{E}_{H}.
\end{equation*}%
Therefore, $\mathcal{E}_{H}$ is tangent to $L$.

Conversely, assume that $\mathcal{E}_{H}$ is tangent to $L$. Since $\mathcal{%
E}_{H}=\pi _{\ast }(t\,X_{H}^{\Omega })$, it follows that the vector field $%
t\,X_{H}^{\Omega }$ is tangent to $\pi ^{-1}(L)$. Indeed, let $\{g_{i}\}$ be
a set of local generators of the sheaf of ideals defining $L$. Then the
pullbacks $\{g_{i}\circ \pi \}$ generate the sheaf of ideals defining $\pi
^{-1}(L)$. For each such generator, we have 
\begin{equation*}
(t\,X_{H}^{\Omega })(g_{i}\circ \pi )=\pi _{\ast }(t\,X_{H}^{\Omega
})(g_{i})=\mathcal{E}_{H}(g_{i})=0.
\end{equation*}%
By the classical Hamilton--Jacobi theorem, it follows that $H$ is constant
on $\pi ^{-1}(L)$ and, consequently, on $L$.\vspace{1em}
\end{proof}

\noindent \textbullet\ After some initial steps, such as those presented in 
\cite{LS}, (see also the review \cite{LLreview}) a formulation of a
Hamilton-Jacobi theory adapted to contact Hamiltonian systems can be found
in \cite{LLM}. Let us now consider the phase space $T^{\ast }Q\times \mathbb{%
R,}$ endowed with the contact structure defined by the contact form: $\eta
=dz-\theta $ where $\theta $ denotes the canonical $1-$form on $T^{\ast }Q.$
Let $H:T^{\ast }Q\times \mathbb{R\rightarrow R}$ be a Hamiltonian function.
As in the case of classical Hamilton-Jacobi theory, the geometric framework
is determined by sections from the configuration manifold into the phase
space $T^{\ast }Q\times \mathbb{R}.$ One should note how \cite{LLM} develops
the computational crystallization of the geometric framework underlying our
Theorem 18. In this regard, it is by now a classical result that, on an open
subset $U\subset Q$ with local coordinates $q^{1},...,q^{n}$ a local section 
$U\rightarrow T^{\ast }Q\times \mathbb{R}$ of the projection $T^{\ast
}Q\times \mathbb{R\rightarrow }Q$ defines a Legendrian submanifold if there
exists a smooth function $S$ on $U$ such that%
\begin{equation*}
z=S(q^{1},...,q^{n}),\text{ and }p_{i}=\frac{\partial S}{\partial q^{i}}.
\end{equation*}%
With a slight abuse of notation, we denote this section by%
\begin{equation*}
S:U\rightarrow T^{\ast }Q\times \mathbb{R}:(q^{i})\longmapsto \left( q^{i},%
\frac{\partial S}{\partial q^{i}},S\right) .
\end{equation*}%
Considering the evolution vector field%
\begin{equation*}
\mathcal{E}_{H}=\frac{\partial H}{\partial p_{i}}\frac{\partial }{\partial
q^{i}}-\left( p_{i}\frac{\partial H}{\partial z}+\frac{\partial H}{\partial
q^{i}}\right) \frac{\partial }{\partial p_{i}}+p_{i}\frac{\partial H}{%
\partial p_{i}}\frac{\partial }{\partial z},
\end{equation*}%
and imposing that it be tangent to the submanifold $\func{Im}S,$ we obtain%
\begin{eqnarray*}
\mathcal{E}_{H}\left( \frac{\partial S}{\partial q_{k}}(t,q)-p_{k}\right)
&=&\left( \sum_{i=1}^{n}\frac{\partial H}{\partial p_{i}}\frac{\partial }{%
\partial q_{i}}-\left( p_{i}\frac{\partial H}{\partial z}+\frac{\partial H}{%
\partial q^{i}}\right) \frac{\partial }{\partial p_{i}}\right) \left( \frac{%
\partial S}{\partial q_{k}}(t,q)-p_{k}\right) \\
&=&\frac{d}{dq_{k}}\left( \left. H\right\vert _{\func{Im}S}\right) =0\quad
(1\leq k\leq n),
\end{eqnarray*}%
which is equivalent to requiring that $\left. H\right\vert _{\func{Im}S}$ be
constant, as guaranteed by Theorem 18. This equivalence leads to the
following key result \cite{LLM}:

\begin{proposition}
Let $S$ be a local section of the projection $T^{\ast }Q\times \mathbb{%
R\rightarrow }Q$ such that $\func{Im}S$ is a Legendrian submanifold of $%
(T^{\ast }Q\times \mathbb{R,}\eta ).$ Then the evolution vector field $%
\mathcal{E}_{H}$ is tangent to $\func{Im}S$ if and only if $\left.
H\right\vert _{\func{Im}S}$ is constant.
\end{proposition}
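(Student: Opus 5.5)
The plan is to treat this as a direct specialization of Theorem 18 to the cotangent setting, backed by an explicit coordinate check. First, $\operatorname{Im}S$ is Legendrian by hypothesis, and $(T^{\ast}Q\times\mathbb{R},\eta)$ with $\eta=dz-\theta=dz-p_i\,dq^i$ is a cooriented contact manifold. The coordinates $(z,q^i,p_i)$ are canonical in the sense of Definition 1, with $x_i=q^i$ and $y_i=p_i$, so the symplectic cover, Proposition 9 and the identity $\mathcal{E}_H=\pi_\ast(tX_H^{\Omega})$ all apply. Theorem 18 then gives both directions at once. I would nevertheless also give the direct local verification below, since it shows concretely why tangency is equivalent to constancy and does not rely on the ideal-generator argument used in the proof of Theorem 18.

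For the direct check, note that $\operatorname{Im}S$ is cut out on $U$ by the $n+1$ functions $\phi_k=p_k-\partial S/\partial q^k$ and $\psi=z-S$. Tangency of $\mathcal{E}_H$ is equivalent to $\mathcal{E}_H(\phi_k)=0$ and $\mathcal{E}_H(\psi)=0$ along $\operatorname{Im}S$.

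The $\psi$ condition holds automatically. Indeed,
\[
\mathcal{E}_H(\psi)=p_i\frac{\partial H}{\partial p_i}-\frac{\partial H}{\partial p_i}\frac{\partial S}{\partial q^i},
\]
and this vanishes on $\operatorname{Im}S$ because $p_i=\partial S/\partial q^i$ there. This is exactly the statement that $\mathcal{E}_H$ is horizontal, $\eta(\mathcal{E}_H)=0$, combined with $\eta|_{\operatorname{Im}S}=0$.

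For the $\phi_k$ conditions, compute along $\operatorname{Im}S$:
\[
\mathcal{E}_H(\phi_k)=-p_k\frac{\partial H}{\partial z}-\frac{\partial H}{\partial q^k}-\frac{\partial H}{\partial p_i}\frac{\partial^2 S}{\partial q^i\partial q^k}.
\]
Compare this with the chain rule for $h(q)=H(q,\partial S/\partial q,S(q))$, namely
\[
\frac{\partial h}{\partial q^k}=\frac{\partial H}{\partial q^k}+\frac{\partial H}{\partial p_i}\frac{\partial^2 S}{\partial q^i\partial q^k}+\frac{\partial H}{\partial z}\frac{\partial S}{\partial q^k}.
\]
Using $p_k=\partial S/\partial q^k$ on the image, this gives $\mathcal{E}_H(\phi_k)|_{\operatorname{Im}S}=-\partial h/\partial q^k$.

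Hence $\mathcal{E}_H$ is tangent to $\operatorname{Im}S$ if and only if $dh=0$ on $U$, that is, if and only if $H|_{\operatorname{Im}S}$ is locally constant. It is constant on each connected component, in line with the ``mod const.'' convention of Theorem 17. The main obstacle is purely bookkeeping. One has to get the $z$-derivative term right: the Reeb-type contribution $-p_k H_z$ must cancel against the term in the chain rule coming from the $z=S$ component. One also has to note that tangency must be tested against all $n+1$ defining functions, with the $\psi$ equation handled by horizontality, so that the sign conventions of $\mathcal{E}_H$ must match those of equation (6).
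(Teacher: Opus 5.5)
Your proposal is correct and follows essentially the paper's route. The paper also computes $\mathcal{E}_H(\partial S/\partial q^k - p_k)$ along $\operatorname{Im}S$, identifies it with $\partial (H|_{\operatorname{Im}S})/\partial q^k$ (your identity with the opposite sign convention), and appeals to Theorem 18. Your explicit check that $\mathcal{E}_H(z-S)$ vanishes on $\operatorname{Im}S$ fills in a step the paper leaves implicit, and that step is needed for the ``if'' direction.
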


This same article \cite{LLM} presents another formulation of the
Hamilton-Jacobi problem by seeking invariant submanifolds of the contact
Hamiltonian vector field $X_{H}$ through sections $\gamma $ of the
projection $T^{\ast }Q\times \mathbb{R\rightarrow }Q\times \mathbb{R.}$
Writing in local coordinates 
\begin{equation*}
\gamma :(q^{i},z)\mapsto (q^{i},\gamma _{i}(q^{i},z),z)
\end{equation*}%
the tangency condition of $X_{H}$ to $\func{Im}\gamma $ together with the
condition that $\func{Im}\gamma $ be coisotropic and foliated by $T^{\ast
}Q- $Lagrangian leaves of the form $(q^{i},\gamma _{i}(q^{i},z_{0})$ leads
to the Hamilton-Jacobi equation:%
\begin{equation*}
\frac{\partial H}{\partial q^{j}}+\frac{\partial H}{\partial p_{i}}\frac{%
\partial \gamma _{i}}{\partial q^{j}}+\gamma _{j}\left( \frac{\partial H}{%
\partial z}+\frac{\partial H}{\partial p_{i}}\frac{\partial \gamma _{i}}{%
\partial z}\right) -H\frac{\partial \gamma _{j}}{\partial z}=0.
\end{equation*}%
Thus, this provides a precise geometric characterization of the
Hamilton--Jacobi problem for a specific class of coisotropic submanifolds in
the contact setting. We will return later to the geometric framework
underlying this proposal.

\medskip

\noindent \textbullet\ The study of time-dependent contact Hamiltonian
systems and their integrability naturally extends the autonomous
non-conservative framework to non-autonomous dynamics. Indeed,
time-dependent contact Hamiltonian systems \cite{LGGMR,GLR,RT} introduce the
notion of cocontact manifolds. Such a manifold $M$ has dimension $2n+2$ and
carries two $1-$forms $(\tau ,\eta )$ with $d\tau =0$ and volume element $%
\tau \wedge \eta \wedge (d\eta )^{n}.$ A standard local model is the bundle $%
\mathbb{R}_{t}\times T^{\ast }Q\times \mathbb{R}$ where the cocontact
structure is given by $\tau =dt$ and by the canonical contact form $\eta $
on $T^{\ast }Q\times \mathbb{R}$.

Early contributions to a Hamilton--Jacobi theory for time-dependent contact
Hamiltonian systems appeared in \cite{LS}, and led to a comprehensive
formulation in \cite{LLLa}. The geometric framework relies on the fibration 
\begin{equation*}
\pi _{Q}^{t}:\mathbb{R}_{t}\times T^{\ast }Q\times \mathbb{R\rightarrow R}%
_{t}\times Q.
\end{equation*}%
Here, we refine the geometric condition that yields the classical
non-autonomous Hamilton--Jacobi equation in dissipative systems.
Specifically, in \cite{LLLa}, sections $\gamma $ of $\pi $ considered such
that $\gamma (t,)$ defines a time-dependent Legendrian submanifold in $%
T^{\ast }Q\times \mathbb{R}.$

The dynamics determined by a Hamiltonian function $H:\mathbb{R}_{t}\times
T^{\ast }Q\times \mathbb{R}\rightarrow \mathbb{R}$ on this phase space is
governed by the vector field $X_{H}$, characterized by%
\begin{equation*}
\tau (X_{H})=1,\quad \eta (X_{H})=-H,\quad i_{X_{H}}d\eta =dH-H_{t}\tau
-H_{z}\eta .
\end{equation*}%
In local Darboux coordinates $(t,q^{i},p_{j},z)$, this vector field takes
the form 
\begin{equation*}
X_{H}=\frac{\partial }{\partial t}+\frac{\partial H}{\partial p_{i}}\frac{%
\partial }{\partial q^{i}}-\left( p_{i}\frac{\partial H}{\partial z}+\frac{%
\partial H}{\partial q^{i}}\right) \frac{\partial }{\partial p_{i}}+\left(
p_{i}\frac{\partial H}{\partial p_{i}}-H\right) \frac{\partial }{\partial z}.
\end{equation*}%
Analogously to the autonomous formulation, there exists a locally defined
generating function $S(q,t)$ associated with a time-dependent Legendrian
submanifold $\gamma (t,\cdot )$ in $T^{\ast }Q\times \mathbb{R}$, such that
the section $\gamma $ can be written as follows (see also \cite{LLLa}): 
\begin{equation*}
\gamma :\mathbb{R}_{t}\times Q\mathbb{\rightarrow R}_{t}\times T^{\ast
}Q\times \mathbb{R}:(t,q)\mapsto \left( t,q^{i},\frac{\partial S}{\partial
q^{i}},S\right) .
\end{equation*}%
The tangency condition $\left. X_{H}\right\vert _{\func{Im}\gamma }\in 
\mathfrak{X}(\func{Im}\gamma )$, then implies%
\begin{equation*}
d_{Q}(H\circ \func{Im}\gamma )+d_{Q}(\frac{\partial S}{\partial t})=0
\end{equation*}%
Consequently, we obtain%
\begin{equation*}
\left. H\right\vert _{\func{Im}\gamma }+\frac{\partial S}{\partial t}%
(t,q)=g(t)
\end{equation*}%
for some function $g.$ By redefining $S(q,t)\longmapsto S(q,t)-\int g(t),$
this simplifies to the standard form

\begin{equation*}
\left. H\right\vert _{\func{Im}S}+\frac{\partial S}{\partial t}(t,q)=0,
\end{equation*}%
which enables complete integration of the Hamiltonian flow via the
generating function $S$ along $\func{Im}\gamma ,$ thereby generalizing the
classical Hamilton--Jacobi equation to time-dependent contact systems while
preserving integrability even in the presence of dissipative terms.\vspace{%
1em}

\noindent \textbullet\ In the same article, \cite{LLLa}, de Le\'{o}n \textit{%
et al.} also address some limitations of the previously discussed
action-independent framework. These include the requirement for complete
solutions depending on $n+1$ parameters (as opposed to the usual $n$ in
symplectic systems), and the restriction to zero-energy levels in the
autonomous case. To handle these issues, the authors present a second,
alternative method in Hamilton--Jacobi theory that works for contact
Hamiltonian systems, covering both autonomous and non-autonomous cases. The
approach consists in considering sections of the fibration%
\begin{equation*}
\pi _{Q}^{t,z}:\mathbb{R}\times T^{\ast }Q\times \mathbb{R}\rightarrow 
\mathbb{R}\times Q\times \mathbb{R}
\end{equation*}%
which are locally given by%
\begin{equation*}
\gamma (t,q,z)=(t,q,\gamma _{i}(t,q,z),z).
\end{equation*}%
Under the key condition that the image of $\gamma $ is a coisotropic
submanifold (rather than Legendrian, as in the previous approach), the
requirement that $X_{H}$ be tangent to $\func{Im}\gamma $ leads to the
equation%
\begin{equation*}
d_{Q}(H\circ \gamma )+\frac{\partial }{\partial z}(H\circ \gamma )\gamma +%
\mathcal{L}_{R_{t}}\gamma =(H\circ \gamma )\mathcal{L}_{\frac{\partial }{%
\partial z}}\gamma ,
\end{equation*}%
which is referred to as the action-dependent Hamilton--Jacobi equation for $(%
\mathbb{R}\times T^{\ast }Q\times \mathbb{R},\tau ,\eta ,H).$

At this stage, a brief reflection on the variational principle underlying
the formulation of a dissipative Hamiltonian theory is in order.

A remarkable advance in recent years has been the development of a
Lagrangian formalism capable of describing dissipative mechanical systems.
Within this approach, the Lagrangian is not limited to a function on the
tangent bundle, but rather takes the form $L\mathcal{(}q,\overset{\cdot }{q}%
,z):TQ\times \mathbb{R}\rightarrow \mathbb{R}$ where the extra variable $z$
(or a function of it) is used to encode frictional effects, energy losses,
or couplings with a thermal bath or other external environments.

This extension naturally gives rise to the geometric framework of contact
Lagrangian systems where the dynamics of dissipative processes are encoded
in the contact structure of the phase space. A key step is to formulate the
appropriate variational principle that governs such systems, ensuring that
the resulting equations capture both the conservative and dissipative
aspects of the dynamics.

Let us consider curves $\gamma :[0,T]\rightarrow Q$ in the configuration
space with fixed boundary conditions. To each such curve, we associate a
real-valued function $z_{\gamma }(t):[0,T]\rightarrow \mathbb{R}$ determined
by the differential relation%
\begin{equation*}
\overset{\cdot }{z}(t)=L\left( \gamma (t),\overset{\cdot }{\gamma }%
(t),z(t)\right) ,
\end{equation*}%
subject to the initial condition $z_{\gamma }(0)=z_{0}.$ The corresponding
action functional is then given by the terminal value $z_{\gamma }(T).$ One
proves that the trajectory $z_{\gamma }(t)$ is stationary with respect to
this action precisely when it satisfies the Euler--Lagrange--Herglotz
equations%
\begin{equation*}
\frac{\partial L}{\partial q^{i}}-\frac{d}{dt}\frac{\partial L}{\partial 
\dot{q}^{i}}+\frac{\partial L}{\partial z}\cdot \frac{\partial L}{\partial 
\dot{q}^{i}}=0.
\end{equation*}%
(see \cite{LLa} and \cite{VBS}). Let $L:TQ\times \mathbb{R\rightarrow R}$ be
a regular Lagrangian, meaning that the Hessian matrix $(\partial
^{2}L/\partial \overset{\cdot }{q}^{i}\partial \overset{\cdot }{q}^{j})$ is
nonsingular. Under this assumption, one introduces the map 
\begin{equation*}
TQ\times \mathbb{R\rightarrow }T^{\ast }Q\times \mathbb{R}:(q,\overset{\cdot 
}{q},z)\mapsto (q,p,z)
\end{equation*}%
where \textit{the conjugate momentum} is given by $p=\partial L/\partial 
\dot{q}.$ In this framework, the Hamiltonian function $H:T^{\ast }Q\times 
\mathbb{R}\rightarrow \mathbb{R}$ is defined as%
\begin{equation*}
H\mathcal{=}\dot{q}^{i}\frac{\partial L}{\partial \dot{q}^{i}}-L
\end{equation*}%
where the velocities $\dot{q}^{i}$ are expressed in terms of the coordinates 
$(q^{i},p_{j},z)$ by (locally) solving the relations $p_{j}=\partial
L/\partial \dot{q}^{j}$ as guaranteed by the regularity of the Lagrangian.

In analogy with classical Hamilton--Jacobi theory, one would expect the
geometric setting to arise from sections of the projection from the
configuration manifold to the extended phase space $\mathbb{R}_{t}\mathbb{%
\times }T^{\ast }Q\times \mathbb{R}.$ Traditionally, this structure
originates from the variational nature of the problem: in our case, within
the canonical Hamiltonian description of contact dynamics, one formally
writes 
\begin{equation}
\overset{\cdot }{z}(t)=\mathcal{L}\left( t,q(t),\overset{\cdot }{q}%
(t),z(t)\right) =p\overset{\cdot }{q}-H  \tag{12}
\end{equation}

However, a geometric framework whose origin lies in the variational
character of the problem should instead provide a local representation of
the form $z=S(q,t)$, capable of encoding the dynamics expressed in equation
(12).

These considerations highlight that the approach in \cite{LLLa}, while not
grounded in a traditional variational principle, successfully uses geometric
structures, such as coisotropy and foliation invariance, to develop a
coisotropic Hamilton--Jacobi theory. This framework effectively generates $%
n- $parameter solution families and extends the theory reach to non-zero
energy levels.\vspace{1em}

\noindent \textbullet\ Implicit Hamiltonian dynamics is introduced in the
canonical symplectic manifold $T^{\ast }Q$ by replacing the Lagrangian
submanifold defined by a Hamiltonian vector field $X_{H}$ with an arbitrary
Lagrangian submanifold \cite{ELSa,ELSb}. This generalization is naturally
described in terms of a Morse family $F=F(q^{i},p_{i},\lambda ^{a})$, which
acts as a generating function for such a submanifold. The resulting
Lagrangian submanifold $\mathcal{B}\subset T(T^{\ast }Q)$ is explicitly
given by: 
\begin{equation*}
\mathcal{B}=\left\{ \left( q^{i},p_{i};\frac{\partial F}{\partial p_{i}},-%
\frac{\partial F}{\partial q^{i}}\right) \in T(T^{\ast }Q)\quad |\;\frac{%
\partial F}{\partial \lambda ^{a}}=0\right\} .
\end{equation*}%
As in the classical explicit Hamilton--Jacobi theorem, for a closed $1-$form 
$\gamma $ defining a Lagrangian section of $T^{\ast }Q$, there exists a
fundamental relationship between suitable tangency conditions of the
restriction $\mathcal{B}|_{\func{Im}(\gamma )}$ and the exact differential
associated with the Morse family $F$ generating $\mathcal{B}$ (see \cite%
{ELSa,ELSb}).

In \cite{ELLS}, implicit contact Hamiltonian dynamics is introduced on $%
T(T^{\ast }Q)\times \mathbb{R}$ by endowing this space with a contact $1$%
-form $\eta ^{T}$ constructed from the canonical contact form $\eta $ on $%
T^{\ast }Q\times \mathbb{R}$. A Morse family $E$ generates a Legendrian
submanifold $N\subset T(T^{\ast }Q)\times \mathbb{R}$. Then, for sections $%
\gamma \colon Q\times \mathbb{R}\rightarrow T^{\ast }Q\times \mathbb{R}$
induced by a (possibly time-dependent) $1$-form on $Q$, there exists a
fundamental relationship between the tangency conditions of $N$ and the
exterior differential of the Morse family $E$ with respect to the canonical
contact structure on $T^{\ast }Q\times \mathbb{R}$.

This framework naturally addresses situations in which the equations of
motion are not explicitly solvable, such as systems arising from singular
Lagrangians or non-horizontal geometric structures, and extends to
nonholonomic mechanics through implicit Legendrian submanifolds. Typical
applications include dissipative systems and constrained dynamics, where the
Morse family $E$ encodes the singularities of the system, while the contact
structure $\eta ^{T}$ ensures geometric and analytical consistency, even in
cases where standard Hamiltonian formulations break down.\medskip

\noindent \textbullet A significant research direction in geometric
mechanics has focused on generalizing the Hamilton--Jacobi framework to
extend to dynamical systems defined on fibered manifolds. In \cite{GMP,GP1},
Grillo \textit{et al.} develop a generalized Hamilton-Jacobi theory that
unifies classical results from symplectic, Poisson, and almost-Poisson
frameworks. Within this setting, let $\left( M,X\right) $ denote a smooth
phase space equipped with a vector field $X\in \mathfrak{X}(M),$ and
consider a surjective submersion $\pi :\left( M,X\right) \rightarrow N$
where $N$ is a smooth manifold. A section $\sigma :N\rightarrow M$ is said
to be a solution to the $\pi -$Hamilton-Jacobi problem if%
\begin{equation*}
\left. X\right\vert _{\func{Im}\sigma }\in \mathfrak{X}(\func{Im}\sigma ).
\end{equation*}%
This natural formulation generalizes the classical Hamilton-Jacobi equation
by incorporating nonholonomic constraints, time-dependent systems, and
broader geometric structures, while preserving the key connection between
integrability and invariant submanifolds.

The concept of a complete solution is central to this framework.
Specifically, a complete solution $\Sigma :N\times \Lambda \rightarrow M$ is
a surjective local diffeomorphism such that for each $\lambda \in \Lambda ,$
the associated partial solution $\sigma _{\lambda }=\Sigma (\cdot ,\lambda
):N\rightarrow M$ satisfies the $\pi -$Hamilton-Jacobi ensuring the
reconstruction of all trajectories of $X$ from those of the projected vector
fields $\left. X\right\vert _{\func{Im}\sigma _{\lambda }}=\sigma _{\lambda
\ast }\pi _{\ast }\left( \left. X\right\vert _{\func{Im}\sigma _{\lambda
}}\right) .$ This construction also enables the study of integrability by
quadratures in Poisson manifolds, even when the Poisson structure is
degenerate. The key idea is that complete solutions contain both first
integrals and invariant submanifolds, linking geometric mechanics with
explicit solutions.

We now focus on the case where $M$ is a contact manifold equipped with a
contact form $\eta .$ In this setting, the surjective local diffeomorphism $%
\Sigma :N\times \Lambda \rightarrow M$ is a complete solution of the $\pi -$%
Hamilton--Jacobi problem for the contact vector field $X_{H}$ if and only if
the vector field $X_{H}^{\Sigma }$ , the unique $p_{\Lambda }-$vertical
field satisfying $\Sigma _{\ast }(X_{H}^{\Sigma })=\left. X_{H}\right\vert _{%
\func{Im}\Sigma }$, is the Hamiltonian vector field of the function $\Sigma
^{\ast }H$ with respect to the contact form $\Sigma ^{\ast }\eta $ on $%
N\times \Lambda .$

In \cite{GP2} Grillo and Padr\'{o}n establish conditions under which a
complete solution of the $\pi -$Hamilton--Jacobi problem on a contact
manifold $(M,\eta )$ guarantees integrability by quadratures. In this
context, when the solution is pseudo-isotropic, one obtains generating
functions $W_{\lambda }$ satisfying $dW_{\lambda }=\sigma _{\lambda }^{\ast
}\eta ,$ and the identity $L_{X_{H}^{\Sigma }}W=\Sigma ^{\ast }H$ leads to
explicit integration formulas. In the particular case $\mathcal{R}(H)=0,$
the dynamics reduce to linear relations in time for $(\varphi _{\lambda
},W_{\lambda })$ guaranteeing integrability. Extending the framework to
rescaled contact forms $\eta ^{\prime }=g\eta $ with $g=1/H$ covers systems
where $\mathcal{R}(H)\neq 0,$ thus connecting the contact and symplectic
regions $(M_{1},\eta )$ and $(M_{0},d\eta ).$ Consequently, introducing the
notion of bi-isotropic complete solutions yields integration by quadratures
in both domains.\medskip

\noindent \textbullet\ Addressing the Hamilton--Jacobi problem in contact
geometry using homogeneous symplectic tools has been explored through
various proposals in the literature. The work of \cite{LLM} offers a deep
and insightful realization of these ideas, in which the identification of $%
T^{\ast }Q\times \mathbb{R}$ with the projective bundle $\mathcal{P}\left(
T^{\ast }\left( Q\times \mathbb{R}\right) \right) $ is employed. By
excluding the subset at infinity, defined in coordinates $%
(q^{i},z,P_{i},P_{z})$ by $\left\{ P_{z}=0\right\} ,$ one constructs a map%
\begin{equation*}
\Phi :T^{\ast }\left( Q\times \mathbb{R}\right) \backslash \left\{
P_{z}=0\right\} \rightarrow T^{\ast }Q\times \mathbb{R}
\end{equation*}%
which transforms a symplectic Hamiltonian system $(T^{\ast }\left( Q\times 
\mathbb{R}\right) \backslash \left\{ P_{z}=0\right\} ,$\linebreak $\omega
_{Q\times \mathbb{R}},\widetilde{H})$ into the contact Hamiltonian system $%
(T^{\ast }Q\times \mathbb{R},\eta ,H).$ This framework allows for a rigorous
analysis of the relationships between the corresponding Hamiltonian vector
fields, their behavior with respect to sections in both phase spaces
(symplectic and contact), and the interplay of solutions to the
Hamilton-Jacobi problem in both settings.

Another particularly significant perspective, both from the viewpoint of its
theoretical framework and its range of applications, arises from the novel
topological interpretation of contact Hamiltonian functions as sections of a
line bundle introduced by Grabowska \textit{et al.}\ \cite{GG}. One of the
main achievements of this approach is the extension of Hamilton--Jacobi
theory to contact manifolds $(M,C)$, where $C$ denotes a (not necessarily
cooriented) contact structure; however, its implications go beyond this
setting.

By identifying a contact manifold $(M,C)$ with a symplectization-type
structure $(P,\tau,M,\omega)$, where $\tau:P\to M$ is an $\mathbb{R}^{\ast}$%
-principal bundle over $M$ and $\omega$ is a $1$-homogeneous symplectic form
on $P$, contact Hamiltonians can be interpreted, in agreement with earlier
discussions, as sections $\sigma$ of the line bundle $L_{P}^{\ast}$,
equivalently as $1$-homogeneous functions on $P$.

Accordingly, for a Hamiltonian function $H$ on $P$ vanishing on $%
\tau^{-1}(L_{0})$, where $L_{0}\subset M$ is a Legendre submanifold, the
associated Hamilton--Jacobi equation for a section $S:Q\to L^{\ast}$, with $%
L^{\ast}$ the dual of a line bundle $L\to Q$, takes the form 
\begin{equation*}
\sigma \circ j^{1}(S)=0.
\end{equation*}

Here, $j^{1}(S)$ denotes the first jet of the section $S$, and $\sigma $ is
the section of the bundle $L_{P}^{\ast }=J^{1}(L^{\ast })\times _{Q}L^{\ast
} $ associated with the Hamiltonian. This formulation applies to arbitrary
contact manifolds, including non-trivializable examples such as projective
bundles $P(T^{\ast }M)$. In the particular case of extended cotangent
bundles $T^{\ast }Q\times \mathbb{R}$, corresponding to autonomous systems,
it recovers the standard Hamilton--Jacobi equation.

This construction provides a powerful framework for the analysis of contact
Hamiltonian systems and is especially well suited for the development of
Hamilton--Jacobi theory in non-autonomous settings.

In fact, this extension arises from the autonomization of contact structures
and contact Hamiltonians, constituting another significant advance in \cite%
{GG}. For a contact manifold $(M,C)$ with associated symplectic structure $%
(P,\tau ,M,\omega )$, its autonomization is defined by considering the
extended $\mathbb{R}^{\ast }-$principal bundle $(\widetilde{P},\widetilde{%
\tau },\widetilde{M},\widetilde{\omega })$, where $\widetilde{P}=P\times
T^{\ast }\mathbb{R}$, $\widetilde{M}=M\times \mathbb{R}$, and the symplectic
form is given by $\widetilde{\omega }=\omega +dt\wedge dp$, which is again $%
1 $-homogeneous.

A time-dependent contact Hamiltonian is a function $H:P\times \mathbb{R}%
\rightarrow \mathbb{R}$ such that, for each $t\in \mathbb{R}$, the function $%
H_{t}=H(\cdot ,t)$ is a $1$-homogeneous Hamiltonian on $P$. The
autonomization of $H$ is then defined by 
\begin{equation*}
\widetilde{H}(x,t,p)=H(x,t)+p,
\end{equation*}%
which is a $1$-homogeneous Hamiltonian on the extended symplectic $\mathbb{R}%
^{\ast }-$principal bundle $\widetilde{P}$.

A time-dependent section $\sigma:M\times\mathbb{R}\to L_{P}^{\ast}$ is
considered, which can be identified with a section of the line bundle $%
L_{P}^{\ast}\times\mathbb{R}$. Its autonomization is given by 
\begin{equation*}
\widetilde\sigma(v_{y},t)=(v_{y},\,p+\sigma(y,t)),
\end{equation*}
where $\sigma:L_{P}^{\ast}\to V(L_{P}^{\ast})$ represents the time-dependent
Hamiltonian section in the extended framework.

Within this setting, the non-autonomous contact Hamilton--Jacobi equation
takes the form 
\begin{equation*}
\sigma_{0}(j^{1}(S_{t}),t)+\frac{\partial S}{\partial t}(q,t)=0,
\end{equation*}
where $S:Q\times\mathbb{R}\to L^{\ast}$ is a section of the dual bundle, and 
$j^{1}(S_{t})$ denotes the first jet of $S_{t}(q)=S(q,t)$ (see \cite{GG} for
details).

As a particularly relevant special case, consider a Hamiltonian of the form 
\begin{equation*}
H_{0}(z,q_{i},p_{j},t)=H_{1}(q_{i},p_{j})+\lambda t z.
\end{equation*}
This yields a time-dependent version of the discounted Hamilton--Jacobi
equation 
\begin{equation*}
H_{1}\left(q_{i},\frac{\partial S}{\partial q_{j}}\right)+\lambda S=0.
\end{equation*}

This equation captures the essence of discounted dynamics, a physical or
economic phenomenon in which relevant quantities such as energy, utility, or
cost decrease over time, as encoded by the discount term $\lambda S$ in the
Hamilton--Jacobi equation.

This intrinsic structure opens the way to further developments, notably
extensions to singular Lagrangian formalisms through Tulczyjew triples,
linear Jacobi structures, and Lie algebroids.\medskip

\noindent \textbullet\ In \cite{GGU}, Grabowska \textit{et al.}\ introduce
AV-bundles, namely affine fibrations that generalize standard vector
bundles, in order to develop a canonical affine-bracket formulation of
Hamiltonian mechanics. A key result of this approach is the observation that
Jacobi algebroids, or equivalently skew-symmetric algebroids endowed with a
distinguished $1-$cocycle, provide the natural geometric framework for the
description of contact manifolds. More precisely, each Jacobi algebroid
induces a Jacobi structure on its dual bundle, which, under suitable
regularity conditions, recovers a contact structure. Related extensions of
almost contact geometry to the setting of Lie algebroids, including
generalized almost contact structures, have been investigated in \cite{PAB}.

Building upon this geometric framework, the work of Balseiro \textit{et al.}%
\ \cite{BMMP} presents a unified formulation of the Hamilton--Jacobi
equation for a broad class of mechanical systems by means of skew-symmetric
algebroids equipped with a $1-$cocycle. This formalism simultaneously
incorporates dissipation, encoded by the $1-$cocycle $\phi $ which accounts
for non-conservative terms, time dependence, modeled through affine bundles
analogous to the AV-bundles introduced in \cite{GGU}, and nonholonomic
constraints, either linear or affine, which are naturally encoded in the
algebroid structure and its associated differential $d^{E}$.

In contrast with Lie algebroids, whose brackets satisfy the Jacobi identity
and hence induce Poisson structures, skew-symmetric algebroids relax this
requirement, allowing for $(d^{E})^{2}\neq 0$. This weakening of the Jacobi
identity provides a natural geometric framework for modeling systems with
external forces or non-conservative dynamics. In this setting, the presence
of a distinguished $1-$cocycle $\phi $ plays the role of a underlying
geometric structure, unifying and generalizing several familiar examples:
the electromagnetic potential in gauge-theoretic models, the contact $1-$%
form in contact geometry, and friction or damping effects arising in
non-classical mechanical systems.

We now turn to the geometric framework that provides a natural setting for
all the aforementioned structures. Specifically, let $(E,Q)$ be a
skew-symmetric algebroid of rank $n$ over the manifold $Q$, endowed with an
anchor map $\rho :E\rightarrow TQ$ and an $\mathbb{R}$-bilinear
skew-symmetric bracket on $\Gamma (E)$ satisfying the Leibniz rule 
\begin{equation*}
\lbrack \sigma ,f\gamma ]=f[\sigma ,\gamma ]+\rho (\sigma )(f)\gamma ,\qquad
\sigma ,\gamma \in \Gamma (E),\;f\in C^{\infty }(Q).
\end{equation*}%
This structure generalizes the Lie algebroid framework by relaxing the
Jacobi identity, as discussed in \cite{GGU} in the context of affine
geometries.

Let $\phi\in\Gamma(E^{\ast})$ be a $1$-cocycle, that is, a section of the
dual bundle satisfying $d^{E}\phi=0$, and consider the vector subbundle $%
V=\phi^{-1}(0)\hookrightarrow E$. The inclusion of $V$ induces on it a
skew-symmetric algebroid structure. Moreover, the projection 
\begin{equation*}
\mu:E^{\ast}\to V^{\ast}
\end{equation*}
defines an affine bundle of rank $1$, modeled over the trivial vector bundle 
$V^{\ast}\times\mathbb{R}\to V^{\ast}$. A section $h\in\Gamma(\mu)$ is
called a Hamiltonian section, and the quadruple $(E,\rho,\phi,h)$ defines a
Hamiltonian system on the skew-symmetric algebroid $E$.

As shown in \cite{BMMP}, the Hamiltonian section $h$ induces a Hamiltonian
vector field $R_{h}$ on $V^{\ast}$, whose integral curves correspond to
solutions of the Hamilton equations in this generalized framework.

The generalized Hamilton--Jacobi equation for a Hamiltonian system on a
skew-symmetric algebroid equipped with a $1$-cocycle is given by 
\begin{equation*}
\mu\circ\bigl(\zeta_{h}^{\alpha}\, d^{E}(h\circ\alpha)\bigr)=0,
\end{equation*}
where $\zeta_{h}^{\alpha}$ is a section of $E$ constructed from a section $%
\alpha\in\Gamma(V^{\ast})$ via the affine structure of the bundle $%
\mu:E^{\ast}\to V^{\ast}$ (see \cite{BMMP} for details).

This equation unifies the classical and contact versions of Hamilton--Jacobi
theory. The presence of the $1$-cocycle $\phi $ naturally introduces
dissipative effects, such as external forces or friction, while the affine
bundle structure of $\mu $ generalizes contact geometry. In this way, both
the classical Hamiltonian framework on symplectic manifolds and the contact
framework based on Legendrian submanifolds are recovered as particular
cases, through suitable reduction procedures and affine Poisson morphisms,
as discussed in \cite{GGU} and further developed in \cite{BMMP}.

To conclude, we emphasize that the mathematical structures discussed here,
Lie and Jacobi algebroids together with their skew-symmetric extensions
equipped with a $1$-cocycle, are, in our view, poised to play a central role
in the future development of the geometric frameworks considered in this
work and their applications. Their combination of affine, cohomological, and
variational features, along with their ability to unify dissipative systems,
nonholonomic constraints, and time-dependent dynamics, suggests that they
may provide a natural setting for addressing certain aspects of quantum
field theory, including the quantization of dissipative systems, open
quantum systems, and non-equilibrium field theories.\medskip

\noindent \textbullet\ After discussing in some detail a selection of the
more representative approaches to Hamilton--Jacobi theory, we conclude by
briefly collecting other formulations across a wide spectrum of related
research lines. A unified overview of Hamilton--Jacobi theory in the
symplectic, cosymplectic, and contact settings is presented by Rom\'{a}n-Roy
in \cite{roman}. A viewpoint emphasizing the role of generalized algebraic
structures (Jacobi and Leibniz) is presented by Esen \textit{et al.} \cite%
{ELLSZ}. In \cite{LLLa}, de Le\'{o}n \textit{et al.} address integrability
of contact Hamiltonian systems, action--angle coordinates, geometric
reduction, and applications to dissipative systems. Along these lines,
Azuaje \cite{azuaje} establishes a Lie integrability-by-quadratures theorem
for Hamiltonian systems in the symplectic, cosymplectic, contact, and
cocontact settings, showing that the existence of a solvable Lie algebra of
symmetries guarantees integrability by quadratures. For certain
applications, Ni \textit{et al.} \cite{NWY} study the contact
Hamilton--Jacobi equation from the viewpoint of viscosity solutions, with
applications to the long-time behavior of dissipative Hamiltonian systems.
Cannarsa \textit{et al.} \cite{CCWY} study contact-type Hamilton--Jacobi
equations through Herglotz generalized variational principle, establishing
connections with weak KAM theory and the Lax--Oleinik evolution in
nonconservative dynamics. Wang \textit{et al}. \cite{WWY-b} solve contact
Hamilton--Jacobi equations on compact manifolds and establishes connections
with weak KAM theory and ergodic properties in nonconservative dynamics.
Finally, Esen \textit{et al.} \cite{ESZ} starting from a discrete contact
Lagrangian formulation, develop a Hamilton--Jacobi theory for continuous
contact Hamiltonian dynamics, with discrete Hamiltonian equations obtained
through a discrete Legendre transformation.

\section{Contact momentum maps}

Symplectic reduction is a fundamental construction in geometry and
Hamiltonian mechanics for the study of systems with symmetries. By
eliminating the degrees of freedom associated with these symmetries, it
produces a reduced Hamiltonian system on a lower-dimensional phase space.
This framework plays a central role in classical mechanics and in several
areas of theoretical physics, including gauge theories, fluid mechanics, and
general relativity.

In their seminal article \cite{MW}, Marsden and Weinstein unified several
classical reduction techniques into a unified geometric framework,
profoundly influencing geometric mechanics. Their construction, based on the
moment map $J:P\rightarrow \mathfrak{g}^{\ast }$ and the induced symplectic
structure on the reduced phase space $P_{\mu }=J^{-1}(\mu )/G_{\mu }$
provided a rigorous geometric language for Hamiltonian systems with
symmetries. The resulting reduced phase space preserves the essential
dynamical content of the original system while removing geometric
redundancies.

However, the classical theory of symplectic reduction proved insufficient
for the treatment of Hamiltonian systems subject to nonholonomic
constraints. To overcome this limitation, Bates and \'{S}niatycki \cite{BS}
developed a framework for symmetry reduction on constraint manifolds,
highlighting the role played by symplectic structures adapted to the
constrained dynamics. Their results were restricted to linear constraints,
where the associated constraint distribution forms a vector subbundle of the
tangent bundle. The extension of these ideas to nonlinear nonholonomic
constraints motivated further developments, as explored in subsequent works
such as \cite{Sn} and \cite{CLMD,LMD}.

In the presence of symmetries in dissipative Hamiltonian systems, contact
geometry naturally replaces the symplectic framework. The problem of
reduction then consists in constructing lower-dimensional contact manifolds
that faithfully encode the constrained dynamics. In this context,
coisotropic reduction provides a fundamental mechanism for the analysis of
dissipative systems with constraints. The reduced contact space supports the
dynamics in terms of Legendrian submanifolds, which represent admissible
physical solutions.

These ideas have been developed in works such as \cite{dLL,Lo,Wi}. In this
section, we revisit these constructions by showing how the symplectic cover
channels the contact momentum map, thereby providing deeper insight into the
corresponding reduction procedures.

Let $G$ be a Lie group with Lie algebra $\mathfrak{g}$ and dual $\mathfrak{g}%
^{\ast }.$ We assume that $G$ acts by contactomorphism on the contact
manifold $(M_{c},\alpha ).$ Denote by $(M^{s},\Omega =d\omega )$ the
symplectic cover of $M_{c}$ as introduced in Section 3$.$ The action of $G$
lifts naturally to $M^{s}$ as follows%
\begin{equation*}
g:M^{s}\rightarrow M^{s}:(p,t\alpha _{p})\mapsto \left( g(p),t\alpha
_{p}\right) ,\quad t>0.
\end{equation*}

Given $\xi \in \mathfrak{g,}$ let $X_{\xi }$ denote the fundamental vector
field on $M^{s}$ associated with the lifted action of $G$. Since the lifted
action preserves the $1-$form $\omega $, as given by the local expression
(1), it follows that%
\begin{equation}
\mathcal{L}_{X_{\xi }}\omega =0,\text{\quad }\forall \xi \in \mathfrak{g}. 
\tag{13}
\end{equation}

As a consequence of (13), we obtain%
\begin{equation*}
i_{X_{\xi }^{{}}}\Omega =-d\left\{ \omega (X_{\xi }^{{}})\right\} ,\text{%
\quad }\forall \xi \in \mathfrak{g.}
\end{equation*}%
This allows us to define the moment map $J:M^{s}\rightarrow \mathfrak{g}%
^{\ast }$ by%
\begin{equation}
J(q)(\xi )=w_{q}(X_{\xi }^{{}})=t\alpha _{p}(X_{\xi }^{{}}),\text{\quad }%
q=(p,t\alpha _{p})\in M^{s}.  \tag{14}
\end{equation}%
It is well known that this momentum map is equivariant with respect to the
coadjoint action of $G,$ namely $J(g\cdot q)=Ad_{g^{-1}}^{\ast }J(q),$ for
all $q\in M^{s}$ and $g\in G.$ See, for instance, Theorem 4.2.10 in \cite{AM}%
.

Relation (14) motivates the following definition.

\begin{definition}
Let $(M_{c},\alpha )$ be a contact manifold and let $G$ be a Lie group
acting on $M_{c}$ by a contactomorphism. The contact momentum map is the map%
\begin{equation*}
j:M_{c}\rightarrow \mathfrak{g}^{\ast }
\end{equation*}%
defined by%
\begin{equation*}
j(p)(\xi ):=\alpha _{p}(X_{\xi }^{{}}),\text{\quad }\xi \in \mathfrak{g},
\end{equation*}%
where $X_{\xi }^{{}}$ denotes the fundamental vector field on $M_{c}$
associated with $\xi .$
\end{definition}

By abuse of notation, we denote by $X_{\xi }^{{}}$ both the fundamental
vector field on $M_{c}$ and its lift to $M^{s},$ which projects onto $X_{\xi
}^{{}}$ under the canonical projection $\pi :M^{s}\rightarrow M_{c}.$

\begin{lemma}
With the above notation, we have 
\begin{equation*}
\pi ^{-1}\left[ j^{-1}(0)\right] =J^{-1}(0).
\end{equation*}
\end{lemma}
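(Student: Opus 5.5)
The identity is a pointwise comparison of the two momentum maps. The plan is to show first that
\begin{equation*}
J(q)=t\,j(\pi(q)),\qquad q=(p,t\alpha_{p})\in M^{s},
\end{equation*}
and then read off the equality of zero sets from the fact that $t\neq 0$ on $M^{s}$.

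\textbf{Step 1 (relation between the fundamental fields).} Fix $\xi\in\mathfrak{g}$ and write $X_{\xi}$ both for the fundamental field on $M^{s}$ and for the one on $M_{c}$. I will use the convention already adopted in the paper, that the lifted field is $\pi$-projectable with
\begin{equation*}
\pi_{\ast}X_{\xi}=X_{\xi}.
\end{equation*}
This holds because the lifted action covers the action on $M_{c}$: $\pi\circ g=g\circ\pi$. Differentiating this relation along the one-parameter subgroup $\exp(s\xi)$ gives the projectability directly.

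\textbf{Step 2 (evaluation of $J$).} Using the definition of the canonical $1$-form, $\omega_{(p,\beta_{p})}(X)=\beta_{p}(\pi_{\ast}X)$ with $\beta_{p}=t\alpha_{p}$, formula (14) becomes
\begin{equation*}
J(q)(\xi)=\omega_{q}(X_{\xi})=t\,\alpha_{p}(\pi_{\ast}X_{\xi})=t\,\alpha_{p}(X_{\xi})=t\,j(p)(\xi).
\end{equation*}
The last equality is the definition of the contact momentum map. Since this holds for every $\xi$, we get $J=t\cdot(j\circ\pi)$ as maps $M^{s}\to\mathfrak{g}^{\ast}$.

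\textbf{Step 3 (zero sets).} By construction $t\in\mathbb{R}^{\ast}$, because $\mathcal{C}_{p}$ omits the origin. Hence $J(q)=0$ if and only if $j(\pi(q))=0$. This is exactly the statement $J^{-1}(0)=\pi^{-1}[j^{-1}(0)]$.

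The only point needing care is Step 1. The paper writes the lift as $(p,t\alpha_{p})\mapsto(g(p),t\alpha_{p})$, which has to be read as $(g(p),(g^{-1})^{\ast}(t\alpha_{p}))$ so that the image lies in $\mathcal{C}_{g(p)}$. I would make this reading explicit. The $t$-coordinate may change under the lift, because $g^{\ast}\alpha=\lambda_{g}\alpha$ for some positive function $\lambda_{g}$, but this does not affect the projection $\pi$. Consequently the vertical ($\partial/\partial t$) component of $X_{\xi}$ is annihilated by $\omega$, which is horizontal by (2). So Step 2 is unaffected regardless of how the lift rescales $t$.
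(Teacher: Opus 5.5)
Your proof is correct and is exactly the argument the paper relies on: the lemma is stated without a separate proof right after (14), where $J(q)(\xi)=\omega_q(X_\xi)=t\,\alpha_p(X_\xi)=t\,j(\pi(q))(\xi)$ is already written down, so the equality of zero sets follows from $t\neq 0$ as in your Step 3. Your reading of the lift as $(g(p),(g^{-1})^{\ast}(t\alpha_p))$, together with the observation that $\omega$ kills the vertical component of $X_\xi$, is a useful precision that the paper omits. One small caveat: the conformal factor $\lambda_g$ need only be positive for $g$ near the identity, but that is all that matters for fundamental fields.
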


\begin{proposition}
If $0\in \mathfrak{g}^{\ast }$ is a regular value of the moment map $%
j:(M_{c},\alpha )\rightarrow \mathfrak{g}^{\ast }$, then $j^{-1}(0)$ is a
coisotropic submanifold of the contact manifold $M_{c}$.
\end{proposition}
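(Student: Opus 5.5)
By Definition 13, $j^{-1}(0)$ is coisotropic in $M_{c}$ exactly when $\pi^{-1}(j^{-1}(0))$ is coisotropic in the symplectic cover $(M^{s},\Omega)$. By the preceding Lemma, $\pi^{-1}(j^{-1}(0))=J^{-1}(0)$. The plan is therefore to prove that $J^{-1}(0)$ is a coisotropic submanifold of $M^{s}$, using the standard symplectic facts for the equivariant momentum map $J$ of (14).

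The first step is regularity. In a local chart, the lifted fundamental fields satisfy $J(q)(\xi)=t\,j(\pi(q))(\xi)$ with $t\neq 0$. If $0$ is a regular value of $j$ at $p\in j^{-1}(0)$, then at $q=(p,t\alpha_p)$ we have $d_qJ=t\,\pi^{*}d_pj+j(p)\,dt=t\,\pi^{*}d_pj$. This map is surjective onto $\mathfrak g^{*}$ because $\pi$ is a submersion. So $0$ is a regular value of $J$, and $J^{-1}(0)$ is a submanifold with tangent space $\ker d_qJ$.

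The second step is the symplectic coisotropy argument. By (13), $i_{X_\xi}\Omega=-d\langle J,\xi\rangle$, so $d_q\langle J,\xi\rangle(v)=-\Omega(X_\xi,v)$. Hence
$$T_q J^{-1}(0)=\ker d_qJ=\{v:\Omega(X_\xi(q),v)=0\ \forall\xi\}=(\mathfrak g\cdot q)^{\perp_\Omega},$$
and therefore $(T_qJ^{-1}(0))^{\perp_\Omega}=\mathfrak g\cdot q$. It remains to check that $\mathfrak g\cdot q\subset T_qJ^{-1}(0)$, that is, that $d_q\langle J,\eta\rangle(X_\xi)=0$ for all $\xi,\eta$. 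Equivariance of $J$ gives $d\langle J,\eta\rangle(X_\xi)=\pm\langle J(q),[\xi,\eta]\rangle$, which vanishes at $J(q)=0$. Alternatively, one can argue directly that $\mathcal L_{X_\xi}\omega=0$ implies $X_\xi(\omega(X_\eta))=\omega([X_\xi,X_\eta])=\pm\omega(X_{[\xi,\eta]})$, which is zero on $J^{-1}(0)$. This proves $J^{-1}(0)$ is coisotropic, and hence $j^{-1}(0)$ is coisotropic in $M_c$.

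The main obstacle is the regularity transfer, and in particular the well-definedness of the lift. The lift is written only for $t>0$ and in terms of $\alpha$; since $G$ acts by contactomorphisms, $g^{*}\alpha=\lambda_g\alpha$ and the naive formula must rescale $t$ so that $\omega$ is preserved. I would take the lift to be the one from Proposition 8 (the exact infinitesimal symplectomorphism projecting onto $X_\xi$), for which (13) holds by construction. The identity $J=t\,(j\circ\pi)$ then follows from $\omega=t\alpha$ together with $\alpha(\pi_*X_\xi)=j(\xi)$. Once this identity is secured, the rest is routine.
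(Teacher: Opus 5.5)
Your proposal is correct and follows essentially the same route as the paper: you use the definition of coisotropy via $\pi^{-1}$ and the Lemma $\pi^{-1}(j^{-1}(0))=J^{-1}(0)$ to reduce to showing $J^{-1}(0)$ is coisotropic in $(M^{s},\Omega)$, transfer regularity from $j$ to $J$, and conclude from $(T_qJ^{-1}(0))^{\perp_\Omega}=T_q(G\cdot q)\subset T_qJ^{-1}(0)$. The paper only asserts the regularity transfer and cites Abraham--Marsden (Lemma~4.3.2) for the orthogonality identity, whereas you verify both directly via $J=t\,(j\circ\pi)$ and $X_\xi(\omega(X_\eta))=\omega([X_\xi,X_\eta])$, and you rightly take the lift to be the $\omega$-preserving lift of Proposition~8 rather than the paper's literal formula, which is not well defined as written.
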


\begin{proof}
By definition of coisotropic submanifolds in contact geometry (see Section
3), it suffices to show that $\pi ^{-1}(j^{-1}(0))=J^{-1}(0)\subset M^{s}$
is coisotropic with respect to the symplectic form $\Omega .$ Since $0$ is a
regular value of $j,$ it is also a regular value of $J$.

For every $q\in J^{-1}(0)$, one has 
\begin{equation*}
T_{q}(G\cdot q)=T_{q}(J^{-1}(0))^{\perp _{\Omega }},
\end{equation*}%
where $\perp _{\Omega }$ denotes the symplectic orthogonal (see, Lemma~4.3.2
in \cite{AM}). Hence, 
\begin{equation*}
T_{q}(J^{-1}(0))^{\perp _{\Omega }}=T_{q}(G\cdot q)\subset T_{q}(J^{-1}(0)),
\end{equation*}%
which shows that $J^{-1}(0)$ is a coisotropic submanifold of $M^{s}.$
\end{proof}

\begin{proposition}
Let $(M,\alpha )$ be a contact manifold on which a Lie group $G$ acts by
contactomorphisms. Assume that $0\in \mathfrak{g}^{\ast }$ is a regular
value of the contact momentum map $j$, and that the action of $G$ on $%
j^{-1}(0)$ is free and proper. Then the quotient manifold 
\begin{equation*}
j^{-1}(0)/G
\end{equation*}
carries a unique contact structure whose contact form is induced by $\alpha $%
.
\end{proposition}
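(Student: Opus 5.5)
The plan is to write $N=j^{-1}(0)$, with $\iota:N\hookrightarrow M$ the inclusion and $p:N\rightarrow N/G$ the quotient map, and to show that $\iota^{\ast}\alpha$ is $p$-basic and that the form it induces on the quotient is contact. I will work under the standing assumption implicit in the lift $(p,t\alpha_{p})\mapsto (g(p),t\alpha_{p})$ used to define $J$, namely that $G$ preserves $\alpha$ itself, so that $\mathcal{L}_{X_{\xi}}\alpha=0$. For merely conformal contactomorphisms the same argument should go through at the level of the sheaf $\mathcal{C}$ rather than of the form, but I will not pursue that here.

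\textbf{Step 1 (basicness).} By Definition 20, $i_{X_{\xi}}\iota^{\ast}\alpha=j(\cdot)(\xi)|_{N}=0$ for every $\xi\in\mathfrak{g}$. Also $\mathcal{L}_{X_{\xi}}\iota^{\ast}\alpha=\iota^{\ast}\mathcal{L}_{X_{\xi}}\alpha=0$. Here $X_{\xi}$ is tangent to $N$, because $N$ is $G$-invariant: equivariance of $J$ together with Lemma 21 gives this. Since the action on $N$ is free and proper, $p$ is a surjective submersion whose fibres are the orbits. Therefore there is a unique $1$-form $\alpha_{0}$ on $N/G$ with $p^{\ast}\alpha_{0}=\iota^{\ast}\alpha$. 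Uniqueness of the induced structure then follows from the injectivity of $p^{\ast}$. Moreover $\dim N/G=2n+1-2k$ with $k=\dim G$, which is odd.

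\textbf{Step 2 (nondegeneracy).} This is the main step. The key identity is the contact analogue of the momentum map equation. Cartan's formula and $\mathcal{L}_{X_{\xi}}\alpha=0$ give $i_{X_{\xi}}d\alpha=-d j_{\xi}$, where $j_{\xi}=j(\cdot)(\xi)$. I will use three consequences of this identity.
\begin{enumerate}
\item The Reeb field satisfies $dj_{\xi}(\mathcal{R})=-d\alpha(X_{\xi},\mathcal{R})=0$, so $\mathcal{R}$ is tangent to $N$. Hence $\alpha_{0}\neq 0$ everywhere.
\item Let $H_{x}=\ker\alpha_{x}$, on which $d\alpha$ is symplectic. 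Then $T_{x}N\cap H_{x}$ is exactly the $d\alpha$-orthogonal in $H_{x}$ of the span $\mathfrak{g}\cdot x$ of the $X_{\xi}(x)$. Indeed $\mathfrak{g}\cdot x\subset H_{x}$, and $dj$ is surjective by regularity.
\item By freeness, $\dim\mathfrak{g}\cdot x=k$. Then $T_{x}N\cap H_{x}$ is coisotropic in $(H_{x},d\alpha)$ with null space $\mathfrak{g}\cdot x$. Consequently $d\alpha$ induces a symplectic form on $(T_{x}N\cap H_{x})/\mathfrak{g}\cdot x$, which identifies with $\ker(\alpha_{0})_{[x]}$.
\end{enumerate}
This is precisely the condition $\ker d\alpha_{0}\cap\ker\alpha_{0}=\{0\}$, that is, $\alpha_{0}\wedge(d\alpha_{0})^{n-k}\neq 0$.

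As a cross-check, and as an alternative route in the spirit of the paper, I would verify the same conclusion on the symplectic cover. By Lemma 21, $\pi^{-1}(N)=J^{-1}(0)$, and Marsden--Weinstein reduction makes $J^{-1}(0)/G$ symplectic. Since $[X_{\xi},\mathbb{L}]=0$ (these are exact infinitesimal symplectomorphisms), $\mathbb{L}$ descends, and $\omega$ descends to $t\alpha_{0}$. Then $d(t\alpha_{0})$ is symplectic on this quotient, which is the symplectic cover of $N/G$. The identity
\[
(d(t\alpha_{0}))^{m+1}=(m+1)\,t^{m}\,dt\wedge\alpha_{0}\wedge(d\alpha_{0})^{m},\qquad m=n-k,
\]
then yields the contact condition for $\alpha_{0}$.

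I expect the main obstacle to be the linear-algebra part of Step 2. One must check carefully that the orbit directions lie in $\ker\alpha$ and are exactly the null directions of $d\alpha$ on $T_{x}N\cap H_{x}$, with no extra degeneracy. Here I would lean on the Reeb transversality from Step 2 and on freeness to ensure $\mathfrak{g}\cdot x\cap\langle\mathcal{R}_{x}\rangle=0$.
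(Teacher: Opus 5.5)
Your proposal is correct and follows essentially the same route as the paper. In both, $\iota^{\ast}\alpha$ annihilates the orbit directions and so descends to $j^{-1}(0)/G$, and the orbit directions are exactly the null space of $d\alpha$ on $\ker\alpha\cap T(j^{-1}(0))$, which gives nondegeneracy on the quotient.

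Your version is tighter than the paper's sketch in three ways. You state the $G$-invariance of $\alpha$ that is needed for the form itself, not just its kernel, to descend. You use the precise identity $i_{X_{\xi}}d\alpha=-dj_{\xi}$, whereas the paper's relation ``$i_{X_{\xi}}d\omega=0$ on $J^{-1}(0)$'' holds only after restriction to $T(J^{-1}(0))$. And you check that $\mathcal{R}$ is tangent to $j^{-1}(0)$, so the induced form never vanishes.
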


\begin{proof}
Let $p\in j^{-1}(0)$. Since $\alpha _{p}(X_{\xi })=0$ for all $\xi \in 
\mathfrak{g}$, the contact form $\alpha $ vanishes along the tangent spaces
to the $G$--orbits. Hence $\alpha $ induces a well-defined one-form on the
quotient space $T_{p}(j^{-1}(0))/T_{p}(G\cdot p)$.

Because $0$ is a regular value of $j$, the submanifold $j^{-1}(0)\subset M$
has codimension $\dim G$. Moreover, the fundamental vector fields $X_{\xi }$
span the characteristic distribution of $j^{-1}(0)$, which coincides with
the kernel of the restriction of $d\alpha $ to $\ker \alpha \cap
T(j^{-1}(0)) $. This follows from the relation 
\begin{equation*}
i_{X_{\xi }}d\omega =0\text{ on }J^{-1}(0),\qquad \forall \xi \in \mathfrak{g%
},
\end{equation*}%
whose restriction to the hypersurface $t=1$ yields $\iota _{X_{\xi }}d\alpha
=0$ on $j^{-1}(0).$ Therefore, the $2-$form $d\alpha $ is nondegenerate on
the quotient $(\ker \alpha \cap T(j^{-1}(0)))/G$, which shows that the
induced $1-$form defines a contact structure on $j^{-1}(0)/G$. The
uniqueness of this contact structure follows from the fact that it is
completely determined by the restriction of $\alpha $.\medskip
\end{proof}

\noindent It should be noted that the proof of Proposition 23 shows that,
within our framework, Theorem 19 in \cite{dLL} concerning reduction via
contact momentum maps is recovered only in the case $\mu =0.$ Indeed, the
key insight in \cite{dLL} is that the characteristic distribution of a
coisotropic submanifold $N,$ defined by%
\begin{equation*}
TN^{\perp _{\wedge }}=\ker \left( \left. \eta \right\vert _{TN}\right) \cap
\ker \left( \left. d\eta \right\vert _{TN}\right)
\end{equation*}%
which is the contact analogue of the symplectic orthogonal, integrates to a
foliation of the constraint manifold. Taking the quotient by this
characteristic foliation yields a new contact manifold of lower dimension.
As a consequence, when $\mu $ is a regular value of the moment map $j$ the
reduced space 
\begin{equation*}
j^{-1}(\mu )/T(j^{-1}(\mu ))^{\perp _{\wedge }}
\end{equation*}%
inherits a natural contact structure. However, in general, this quotient
does not coincide with the orbit space $j^{-1}(\mu )/G.$

The foliation determined by the characteristic distribution also suggests
natural extensions of contact reduction to regular values $%
\mu
\neq 0,$ as in the geometric construction described by Willett in \cite{Wi}.
More precisely, for $\mu \in \mathfrak{g}^{\ast }$, let $K_{\mu }$ denote
the connected Lie subgroup of $G_{\mu }$ with Lie algebra $\mathfrak{k}_{\mu
}=\ker (\mu |_{\mathfrak{g}_{\mu }})$. If $K_{\mu }$ acts properly on $\Phi
_{\alpha }^{-1}(\mathbb{R}^{+}\mu )$, $\Phi _{\alpha }$ is transverse to $%
\mathbb{R}^{+}\mu $ and $\ker \mu +\mathfrak{g}_{\mu }=\mathfrak{g}$, then
the quotient 
\begin{equation*}
M_{\mu }:=\Phi _{\alpha }^{-1}(\mathbb{R}^{+}\mu )/K_{\mu }
\end{equation*}%
carries a natural contact orbifold structure endowed with an induced contact
form $\alpha _{\mu }$ (see details in \cite{Wi}). This construction
generalizes the special case $\mu =0$, previously studied in \cite{Al,Lo},
and provides a unified framework for contact reduction at arbitrary regular
values of the momentum map.

Furthermore, \cite{Wi} introduces a stratified reduction scheme that applies
when the transversality condition is relaxed. In this setting,
transversality is replaced by the existence of a convex, $\mathbb{R}^{+}-$%
invariant slice associated with $\mu \in \mathfrak{g}^{\ast }$. Under this
assumption, the quotient $M_{\mu }$ is realized as a stratified space: 
\begin{equation*}
M_{\mu }=\bigsqcup_{(H)}(M_{\mu })_{(H)},
\end{equation*}%
where the strata $(M_{\mu })_{(H)}$, indexed by the conjugacy classes of
stabilizer subgroups of $K_{\mu }$, each inherit a cooriented contact
structure. This construction extends the corresponding result for $\mu =0$
developed by Lerman and Willet in \cite{LW}, and yields a comprehensive
geometric description of contact reduction in the presence of singularities
and non-free group actions.

\medskip

\noindent \textbullet\ We conclude this section with a brief reference to
other contributions on contact momentum maps and reduction. Closely related
to the present work, Garc\'{\i}a-Mauri\~{n}o \cite{garcia} studies the
symplectification of contact manifolds and its commutativity with
coisotropic and momentum map reduction, and analyzes the behavior of the
Reeb vector field under contact reduction.

Boyer and Galicki \cite{boyer-galicki} introduced the notion of toric
contact manifolds and analyzed contact momentum maps for torus actions
preserving a contact form. Their work establishes foundational results on
contact toric geometry. Building on this foundational work Lerman \cite%
{lerman} developed a systematic framework for contact toric manifolds,
introducing the notion of the contact moment cone and clarifying the
structure of torus actions in contact geometry.

Munteanu \textit{et al.} \cite{munteanu-rey-salgado} developed a notion of
momentum map and a corresponding reduction procedure within G\"{u}nther's
formalism (a covariant geometric framework generalizing symplectic mechanics
to field-theoretic settings), extending symmetry reduction beyond the
standard symplectic setting. In the $k-$contact setting, de Lucas \textit{et
al.} \cite{deLucas-Rivas-Vilarino-Zawora} develop a Marsden--Weinstein
reduction scheme and analyze how the $k-$contact Hamiltonian dynamics
projects onto the reduced space. De Le\'{o}n and Izquierdo-L\'{o}pez \cite%
{deLeon-Izquierdo} provide a comprehensive review of coisotropic reduction
in symplectic, cosymplectic, contact, and co-contact Hamiltonian systems,
emphasizing a unified geometric framework compatible with the underlying
dynamics.

Colombo \textit{et al.} \cite{colombo-deleon-eyrea-lopez} develop a
geometric formulation of combined conservative--dissipative mechanical
systems within contact Hamiltonian dynamics, analyzing symmetries and
reduction, and proposing variational integrators that preserve the
underlying geometric structure. Xia and Xin \cite{xia-xin} study singular
reduction of contact Hamiltonian systems, extending contact reduction to
non-regular group actions and analyzing the geometric structure and induced
dynamics on the resulting stratified orbit spaces. Do and Oh \cite{do-oh}
introduce a thermodynamic reduction procedure for contact dynamics, deriving
effective macroscopic equations from contact Hamiltonian systems and
providing a geometric framework for nonequilibrium thermodynamic dynamics.

\section{Symmetries and Dissipation}

The geometric understanding of dissipative processes has gained significant
attention in recent years \cite{Raz}. Within this context, contact geometry
has emerged as a natural framework for extending classical Hamiltonian
mechanics beyond the conservative setting. Although contact structures
appear in mechanics through the Poincar\'{e}--Cartan $1-$form associated
with time-dependent Hamiltonians, a decisive conceptual step was taken in 
\cite{BCT}. There, the phase space of mechanical systems, both conservative
and dissipative, is reinterpreted as a contact manifold, providing a genuine
extension of the traditional symplectic formalism. The key innovation
consists in the introduction of an additional dimension, encoded by a
dynamical variable $z$ which naturally accounts for the interaction of the
system with its environment.

This perspective has evolved into a robust geometric framework for modeling
mechanical systems subject to dissipation, as illustrated in \cite%
{Brr,Br,BLN,LL,GLR,Vi}. In particular, contact Hamiltonian dynamics extends
the classical Hamiltonian equations in a way that intrinsically incorporates
dissipative effects, yielding a natural generalization of symplectic
dynamics to non-conservative systems.

A fundamental distinction between symplectic and contact Hamiltonian
dynamics lies in the non-conservation of the Hamiltonian $H$. Along
solutions of the contact Hamiltonian equations, one has \ \ 
\begin{equation*}
\frac{d}{dt}H=H\frac{\partial H}{\partial z},
\end{equation*}%
which directly reflects the intrinsically dissipative character of contact
systems.

Just as the analysis of conserved quantities in autonomous Hamiltonian
systems relies on a well-established geometric framework (see, e.g., \cite%
{RR}), the analysis of dissipative quantities in contact systems has
likewise been shown to be deeply rooted in the geometric structure of the
extended phase space. Seminal works such as \cite{GGMRRx} and \cite{LL}
provided the foundational insights for this field.

The paradigmatic example of classical conservation laws is provided by
Noether's theorem. Originally formulated in the context of Lagrangian and
Hamiltonian mechanics, modern formulations of the theorem are grounded in a
geometric framework for a Lagrangian system $L:TQ\rightarrow \mathbb{R}$
(see, e.g., \cite{crampin1983}). In this setting, the invariance condition $%
X^{c}(L)=0$, where $X^{c}$ denotes the complete lift to $TQ$ of a vector
field $X$ on $Q$, implies that $X^{v}(L)$ is a conserved quantity. Here, $%
X^{v}$ denotes the vertical lift of $X$ to $TQ$, canonically associated with 
$X$.

This principle has been successfully extended and reformulated across a wide
range of contexts, including field theory, general relativity, and quantum
physics, highlighting the simplicity and universality of one of the most
fundamental principles in theoretical physic (see, e.g., \cite{kosmann}).

A striking result due to de Le\'{o}n and Lainz \cite{LL} shows that, in the
contact Lagrangian setting $L:TQ\times \mathbb{R}\rightarrow \mathbb{R}$,
the quantity $X^{v}(L)$ dissipates in the same way as the energy of the
system, namely $H=v^{i}\partial L/\partial v^{i}-L$. This result provides a
natural starting point for the development of a theory of dissipative
mechanical systems, a field that is still in its early stages.

To begin, let $(M,\eta )$ be a $(2n+1)-$dimensional contact manifold endowed
with a contact $1-$form $\eta ,$ so that, $\eta \wedge (d\eta )^{n}$ is a
volume form on $M.$ The Reeb vector field $\mathcal{R}$, satisfies%
\begin{equation*}
i_{\mathcal{R}}\left( \eta \wedge (d\eta )^{n}\right) =(d\eta )^{n}.
\end{equation*}%
As recalled in Section 2, to each smooth function $H$ defined on $M,$ one
associates a unique Hamiltonian vector field $X_{H}\in \mathfrak{X}(M)$,
satisfying%
\begin{align*}
\eta (X_{H})& =-H \\
\mathcal{L}_{X_{H}}\eta & =-\mathcal{R}(H)\eta .
\end{align*}

For a fixed Hamiltonian function $H\in C^{\infty }(M),$ the triple $(M,\eta
,H)$ will be referred to as a contact Hamiltonian system.

We also recall from Section 2 that, for any pair of smooth functions $f,g\in
C^{\infty }(M),$ the contact bracket is defined by%
\begin{equation*}
\{f,g\}=-i_{[X_{f},X_{g}]}\eta .
\end{equation*}%
(a different sign convention was adopted in Section 2 for convenience). This
can be equivalently expressed as%
\begin{equation*}
X_{f}(g)+g\mathcal{R}(f)
\end{equation*}%
where $X_{f}$ and $X_{g}$ denote the Hamiltonian vector fields associated
with $f$ and $g$, respectively. A notable special case arises when
considering the action of $X_{H}$ on $H$ itself, yielding the dissipation
law for the Hamiltonian $H$ 
\begin{equation*}
X_{H}(H)=-\mathcal{R}(H)\cdot H.
\end{equation*}%
In this context, a real-valued function $f$ on the contact Hamiltonian
system $(M,\eta ,H)$ is called a dissipated quantity if it satisfies $%
\{H,f\}=0$, or equivalently, $X_{H}(f)=-\mathcal{R}(H)\cdot f.$ Analogously,
a function $g\in C^{\infty }(M)$ is said to be a conserved quantity whenever 
$X_{H}(g)=0.$ It then follows directly that if $f_{1}$ and $f_{2}$ are
dissipated quantities, then the ratio $f_{1}/f_{2}$ defines a conserved
quantity on the open set where $f_{2}\neq 0.$

The search for dissipation laws associated with symmetries of a contact
Hamiltonian system begins with the following definition.

\begin{definition}
A vector field $Y\in \mathfrak{X(}M)$ is called a dynamical symmetry of the
contact Hamiltonian system $(M,\eta ,H)$ if $\mathcal{L}_{Y}X_{H}=0.$
\end{definition}

With this definition, it is straightforward to show that if $Y$ is a
dynamical symmetry, then $\eta (Y)$ is a dissipated quantity. Indeed, this
follows from the key relation established in \cite{LL},%
\begin{equation*}
\{H,\eta (Y)\}=\eta ([X_{H},Y]).
\end{equation*}

On the other hand, let $X$ be a dynamical symmetry of the contact
Hamiltonian system $(M,\eta ,H)$ such that the Hamiltonian $H$ is constant
along the flow of $X$ and assume that $\mathcal{L}_{X}\eta =dg,$ for some
smooth function $g\in C^{\infty }(M).$ Then $g$ is a conserved quantity.

This result follows from the remarkable identity established in \cite{LL},%
\begin{equation*}
\{H,f\}=\left( \mathcal{L}_{X}\eta \right) (X_{H})+X(H).
\end{equation*}

When a Lie group acts by preserving the underlying geometric structures,
characteristic quantities naturally arise, as asserted by Noether's theorem.
In the contact setting, such quantities typically appear as dissipated
quantities, a fact captured by the following fundamental result. This
statement was obtained in \cite{LL} as an application of the contact
momentum map, and independently in \cite{Pe} through the analysis of the
invariance of the orbits of the Hamiltonian vector field.

\begin{proposition}
Let $G$ be a Lie group acting by contactomorphisms on the contact
Hamiltonian system $(M,\eta ,H),$ and assume that the Hamiltonian function $%
H $ is $G-$invariant, i.e.,$\ g^{\ast }H=H,$ for all $g\in G.$ Then, for any 
$\xi \in \mathfrak{g,}$ the Hamiltonian function $f_{X_{\xi }}$ associated
with the fundamental vector field $X_{\xi }\in \mathfrak{X}(M)$ is a
dissipated quantity, 
\begin{equation*}
\{H,f_{X_{\xi }}\}=0.
\end{equation*}
\end{proposition}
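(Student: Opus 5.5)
The plan is to use the antisymmetry of the contact bracket. It then suffices to compute $\{f_{X_\xi},H\}$ rather than $\{H,f_{X_\xi}\}$, and the first of these will be controlled entirely by the two hypotheses: the action is by contactomorphisms, and $H$ is $G$-invariant.

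\textbf{Step 1 (interpretation of the hypothesis).} I read ``acting by contactomorphisms'' as in Section 5, where the momentum map is built from $\alpha$: the action preserves the contact form, $g^{\ast}\eta=\eta$. Differentiating along one-parameter subgroups gives $\mathcal{L}_{X_\xi}\eta=0$ for every $\xi\in\mathfrak{g}$. Set
\begin{equation*}
f_{X_\xi}:=-\eta(X_\xi).
\end{equation*}
Then $X_\xi$ satisfies both defining conditions of the Hamiltonian vector field of $f_{X_\xi}$: $\eta(X_\xi)=-f_{X_\xi}$, and $\mathcal{L}_{X_\xi}\eta=0=-\rho\,\eta$ with $\rho=0$. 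By uniqueness of Hamiltonian vector fields, $X_\xi=X_{f_{X_\xi}}$. Since the Hamiltonian condition forces $\rho=\mathcal{R}(f_{X_\xi})$, we also get $\mathcal{R}(f_{X_\xi})=0$. This is, up to sign, the contact momentum map component $j(\cdot)(\xi)$.

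\textbf{Step 2 (antisymmetry).} From the definition $\{f,g\}=-i_{[X_f,X_g]}\eta$ and the antisymmetry of the Lie bracket of vector fields, $\{H,f\}=-\{f,H\}$. This step is routine but essential, because the explicit formula $X_f(g)+g\,\mathcal{R}(f)$ does not look antisymmetric.

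\textbf{Step 3 (computation).} Using the explicit expression,
\begin{equation*}
\{f_{X_\xi},H\}=X_{f_{X_\xi}}(H)+H\,\mathcal{R}(f_{X_\xi})=X_\xi(H)+0 .
\end{equation*}
$G$-invariance of $H$ gives $X_\xi(H)=0$, so $\{f_{X_\xi},H\}=0$. By Step 2, $\{H,f_{X_\xi}\}=0$, which is the claim. Equivalently, $X_H(f_{X_\xi})=-\mathcal{R}(H)f_{X_\xi}$, so $f_{X_\xi}$ is a dissipated quantity.

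\textbf{Alternative route and main obstacle.} One could instead show that $X_\xi$ is a dynamical symmetry. The flow of $X_\xi$ preserves both $\eta$ and $H$, hence the defining equations of $X_H$, so $[X_\xi,X_H]=0$. The relation $\{H,\eta(Y)\}=\eta([X_H,Y])$ of \cite{LL} then gives the result. The only delicate point in either route is the hypothesis of Step 1. If ``contactomorphism'' meant only $g^{\ast}\eta=\lambda_g\eta$, then $\mathcal{R}(f_{X_\xi})$ would be nonzero and the result would fail in general. I therefore state explicitly that the form $\eta$ itself is preserved, consistent with the lifted action on $M^{s}$ preserving $\omega$ as in (13).
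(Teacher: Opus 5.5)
Your proposal is correct. The identification $X_\xi=X_{f_{X_\xi}}$ with $\mathcal{R}(f_{X_\xi})=0$ follows from uniqueness as you say, $\{f,g\}=-i_{[X_f,X_g]}\eta$ is antisymmetric, and $\{f_{X_\xi},H\}=X_\xi(H)+H\,\mathcal{R}(f_{X_\xi})=0$.

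The paper gives no proof of its own. It defers to \cite{LL} (via the contact momentum map) and \cite{Pe} (via invariance of the orbits of $X_H$). The surrounding text points to what you call your alternative route: the flow of $X_\xi$ preserves $\eta$ and $H$, hence $X_H$, so $X_\xi$ is a dynamical symmetry and $\{H,\eta(X_\xi)\}=\eta([X_H,X_\xi])=0$.

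Your main route replaces that flow argument with a purely infinitesimal bracket computation. It also shows exactly where strictness of the action enters: for a merely conformal action the same computation gives $\{H,f_{X_\xi}\}=-H\,\mathcal{R}(f_{X_\xi})$. The dynamical-symmetry route proves something more general instead: $\eta(Y)$ is dissipated for every $Y$ with $[X_H,Y]=0$, whether or not $Y$ is Hamiltonian or comes from a group action.

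Your caveat about the hypothesis is justified. With the sheaf-theoretic notion $f^{\ast}\mathcal{C}=\mathcal{C}$ from Section 3, the statement fails. Take $\mathbb{R}^{3}$ with $\eta=dz-y\,dx$, the $\mathbb{R}$-action generated by $y\partial_y+z\partial_z$, and the invariant Hamiltonian $H=x$; then $f_{X_\xi}=-z$ and $\{H,f_{X_\xi}\}=x$. So the strict reading, consistent with the lifted action in (13) and with \cite{LL}, is the one that must be assumed.
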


Lagrangian mechanics is founded on a well-established geometric framework
defined on the tangent bundle. In particular, the almost tangent structure
of the tangent bundle $TM$ of a manifold $M$, described by $(1,1)-$tensor
field, provides the geometric backbone of the Lagrangian formalism (see,
e.g., \cite{SCC} for foundational aspects of this theory).

This geometric framework extends naturally to contact Lagrangian systems,
where the tangent bundle is replaced by an extended phase space endowed with
a contact structure. This setting enables the geometric description of
non-conservative dynamics. In this context, the Lagrangian formalism is
enriched by the interplay between the almost tangent structure and contact
geometry, giving rise to dissipated quantities as natural counterparts of
conserved quantities in symplectic mechanics. This approach generalizes
classical Lagrangian mechanics and also provides a clear geometric
interpretation of dissipation, as developed in \cite{GGMRRx}.

To make this geometric structure explicit in local coordinates, le $Q$ be an 
$n-$dimensional manifold, and denote by $TQ$ its tangent bundle. The
extended phase space is given by $TQ\times \mathbb{R,}$ endowed with local
coordinates $(q^{i},v^{j},z)$. The canonical almost tangent structure $J$ on 
$TQ,$ locally expressed as%
\begin{equation*}
J=\frac{\partial }{\partial v^{i}}\otimes dq^{i}
\end{equation*}%
admits a natural extension to $TQ\times \mathbb{R.}$ In addition, the
Liouville vector field $\Delta $ on $TQ\times \mathbb{R}$ is defined locally
by $\Delta =v^{i}\partial /\partial v^{i}.$ Let $L:TQ\times \mathbb{%
R\rightarrow R,}$ be a regular Lagrangian function, as recalled in Section
4, i.e., its Hessian matrix with respect to the velocities is non-degenerate
at every point. As a consequence, the $1-$form on $TQ\times \mathbb{R}$
defined by%
\begin{equation*}
\eta _{L}:=dz-dL\circ J
\end{equation*}%
is a contact form. In local coordinates, we introduce the notation 
\begin{equation*}
\alpha _{L}:=dL\circ J=\frac{\partial L}{\partial v^{i}}dq^{i}.
\end{equation*}%
The pair $(TQ\times \mathbb{R},\eta _{L})$ together with the Lagrangian $L$,
is referred to as a contact Lagrangian system.

The energy function associated with the system is defined as%
\begin{equation*}
E_{L}=\Delta L-L=v^{i}\frac{\partial L}{\partial v^{i}}-L.
\end{equation*}

The Hamiltonian vector field associated with the energy function $E_{L}$
will be denoted by $\xi _{L}:=X_{E_{L}}$. Thus, the triple $(TQ\times 
\mathbb{R},\eta _{L},E_{L})$ defines a contact Hamiltonian system naturally
associated with the contact Lagrangian structure. Within this framework, one
of the seminal results of the theory asserts that the characteristics
associated with Noether symmetries are dissipated rather than conserved in
the contact setting \cite{LL}. Indeed, let $Y\in \mathfrak{X}(Q).$ The
complete lift $Y^{c}$ to $TQ\times \mathbb{R}$ is an infinitesimal symmetry
of the Lagrangian $L,$ that is, $Y^{c}(L)=0$ if and only if $Y^{v}(L)$ is a
dissipated quantity, where $Y^{v}=J(Y^{c})$ denotes the vertical lift of $Y.$
This dissipative condition is expressed by the commutativity condition :%
\begin{equation*}
\left\{ E_{L},Y^{v}(L)\right\} =0
\end{equation*}

Let $\mathfrak{X}_{proj}(TQ\times \mathbb{R})\mathbb{\ }$denote the set of
vector fields on $TQ\times \mathbb{R}$ that are projectable onto both $Q$
and $\mathbb{R}$. In local coordinates $(q^{j},v^{i},z)$ any such vector
field can be written as

\begin{equation*}
X=F_{i}(q)\frac{\partial }{\partial q^{i}}+G_{i}(q,v,z)\frac{\partial }{%
\partial v^{i}}+H(z)\frac{\partial }{\partial z},\text{\quad }(1\leq i\leq
n).
\end{equation*}

The following definition extends both the notion of an infinitesimal
symmetry of the Lagrangian arising from complete lifts of vector fields on $%
Q $ and the associated dissipative property of the vertical lift $X^{v}(L)$ 
\cite{LL}.

\begin{definition}
A vector field $X\in \mathfrak{X}_{proj}(TQ\times \mathbb{R})$ is called an
infinitesimal symmetry of $L$ if 
\begin{equation*}
X(L)=-\mathcal{R}_{L}(f)\cdot L,
\end{equation*}%
where $f=-\eta _{L}(X)$ and $\mathcal{R}_{L}$ denotes the Reeb vector field
associated with the contact form $\eta _{L}.$
\end{definition}

The subsequent result \cite{Pe}, delineates the geometric premises that
underpin this novel dissipative object. In particular, it formalizes the
conditions under which such structures emerge, thereby bridging the gap
between symmetry properties and their dissipative manifestations in the
contact Lagrangian setting.

\begin{proposition}
Let $X\in \mathfrak{X}(TQ\times \mathbb{R)}$ be a vector field preserving
the almost tangent structure $J$ invariant, i.e., $\mathcal{L}_{X}J=0.$ Then
the following relation holds, $\mathcal{L}_{X}\alpha _{L}=\alpha _{X(L)}$.
Moreover, if\ $X\in \mathfrak{X}_{proj}(TQ\times \mathbb{R})$ is an
infinitesimal symmetry of the Lagrangian $L$, then $X$ is a contact
Hamiltonian vector field with respect to the contact structure defined by $%
\eta _{L}.$
\end{proposition}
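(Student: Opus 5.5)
The first identity is a direct computation with the Lie derivative of a $1$-form, and I would use it to reduce the second statement to the characterisation of contact Hamiltonian vector fields.

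\textbf{First identity.} For any vector field $Y$ on $TQ\times\mathbb{R}$, the definition $\alpha_L(Y)=(JY)(L)$ gives
\begin{equation*}
(\mathcal{L}_X\alpha_L)(Y)=X\big((JY)(L)\big)-\big(J[X,Y]\big)(L).
\end{equation*}
The hypothesis $\mathcal{L}_XJ=0$ means exactly that $[X,JY]=J[X,Y]$ for every $Y$. Substituting, the right-hand side becomes
\begin{equation*}
X\big(JY(L)\big)-[X,JY](L)=JY\big(X(L)\big)=\alpha_{X(L)}(Y).
\end{equation*}
Hence $\mathcal{L}_X\alpha_L=\alpha_{X(L)}$, where $\alpha_F:=dF\circ J$ for any function $F$.

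\textbf{Reduction of the second statement.} Recall that $X$ is a contact Hamiltonian vector field for $\eta_L$ if and only if $\mathcal{L}_X\eta_L=\rho\,\eta_L$ for some function $\rho$. In that case, contracting $\mathcal{L}_X\eta_L=i_Xd\eta_L+d(\eta_L(X))$ with the Reeb field $\mathcal{R}_L$ forces $\rho=-\mathcal{R}_L(f)$ with $f=-\eta_L(X)$, and $X=X_f$. So it suffices to show that $\mathcal{L}_X\eta_L$ is a multiple of $\eta_L$. Write $X=F^i(q)\partial_{q^i}+G_i\partial_{v^i}+h(z)\partial_z$; I rename the $\partial_z$-coefficient $h$ to avoid a clash with the Hamiltonian $H$. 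By the first part,
\begin{equation*}
\mathcal{L}_X\eta_L=h'(z)\,dz-\alpha_{X(L)}.
\end{equation*}
Using the symmetry condition $X(L)=-\mathcal{R}_L(f)\,L$ and the Leibniz rule $\alpha_{\phi L}=\phi\,\alpha_L+L\,(d\phi\circ J)$, this becomes
\begin{equation*}
\mathcal{L}_X\eta_L=h'(z)\,dz+\mathcal{R}_L(f)\,\alpha_L+L\,\big(d\mathcal{R}_L(f)\big)\circ J.
\end{equation*}

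\textbf{What must be shown.} Proportionality to $\eta_L=dz-\alpha_L$ then requires two things:
\begin{equation*}
h'(z)=-\mathcal{R}_L(f)\quad\text{and}\quad L\,\big(d\mathcal{R}_L(f)\big)\circ J=0,
\end{equation*}
the second meaning that $\mathcal{R}_L(f)$ does not depend on the velocities. A convenient check is that $\mathcal{L}_X\eta_L$ vanishes on vertical vectors, since both $dz$ and $\alpha_F$ annihilate them. So only the $dq^i$- and $dz$-components need comparing.

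\textbf{Main obstacle.} I expect the hardest step to be establishing these two conditions, i.e. showing that $\mathcal{R}_L(f)=-h'(z)$ is independent of the velocities. My first attempt would be to pair $\mathcal{L}_X\eta_L$ with $\mathcal{R}_L$ directly, using $\eta_L(\mathcal{R}_L)=1$, $i_{\mathcal{R}_L}d\eta_L=0$ and $[X,\mathcal{R}_L]$. Next I would exploit $\mathcal{L}_XJ=0$ together with projectability: these force $G_i=v^j\partial_jF^i$, so $X$ is the complete lift of $F^i\partial_{q^i}$ plus $h(z)\partial_z$. Then $f=h(z)-F^i\,\partial L/\partial v^i$ can be computed explicitly. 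The plan is to compute $\mathcal{R}_L(f)$ in coordinates using regularity of the Hessian and check that the symmetry condition makes it reduce to $-h'(z)$. If this fails in general, I would test it on the canonical case $L=\tfrac12|v|^2-V(q)-\gamma z$ to determine whether an extra hypothesis, such as $f$ being a dissipated quantity, is implicitly needed.
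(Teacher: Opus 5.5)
Your proof of $\mathcal{L}_X\alpha_L=\alpha_{X(L)}$ is correct. Your reduction of the second assertion to the single identity $\mathcal{R}_L(f)=-h'(z)$ is also correct: once that identity holds, $d(\mathcal{R}_L(f))\circ J=-h''(z)\,dz\circ J=0$ follows automatically. The gap is that you never prove this identity. You label it the main obstacle and even consider that an extra hypothesis might be needed, so the second assertion is left unproved.

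The coordinate computation you plan would also mislead you, because of a sign slip. In fact $f=-\eta_L(X)=F^i\,\partial L/\partial v^i-h(z)$, not $h(z)-F^i\,\partial L/\partial v^i$. With your sign you would obtain $\mathcal{R}_L(f)=+h'(z)$ and wrongly conclude that the statement fails. A smaller inaccuracy: $\mathcal{L}_XJ=0$ together with projectability only forces $G_i=v^j\,\partial F^i/\partial q^j+g_i(q,z)$. This is harmless, since $G_i$ does not enter $f$.

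The missing observation is that $\mathcal{R}_L(f)=-h'(z)$ holds for \emph{every} $X\in\mathfrak{X}_{proj}(TQ\times\mathbb{R})$, without using the symmetry condition. Your own ``first attempt'' gives it at once.
\begin{itemize}
\item By Cartan's formula and $i_{\mathcal{R}_L}d\eta_L=0$, you get $(\mathcal{L}_X\eta_L)(\mathcal{R}_L)=\mathcal{R}_L(\eta_L(X))=-\mathcal{R}_L(f)$.
\item The $dv^j$-components of $i_{\mathcal{R}_L}d\eta_L=0$ read $\frac{\partial^2L}{\partial v^i\partial v^j}\,dq^i(\mathcal{R}_L)=0$. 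So regularity gives $J\mathcal{R}_L=0$. Hence $\alpha_F(\mathcal{R}_L)=0$ for all $F$, and $dz(\mathcal{R}_L)=\eta_L(\mathcal{R}_L)=1$.
\item The first part then yields $(\mathcal{L}_X\eta_L)(\mathcal{R}_L)=h'(z)-\alpha_{X(L)}(\mathcal{R}_L)=h'(z)$.
\end{itemize}
Equivalently, $\mathcal{R}_L=\frac{\partial}{\partial z}-W^{ij}\frac{\partial^2L}{\partial v^j\partial z}\frac{\partial}{\partial v^i}$, with $(W^{ij})$ the inverse Hessian. The $z$- and $v$-derivative contributions of $F^i(q)\,\partial L/\partial v^i$ then cancel.

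Only now does the symmetry hypothesis enter: it becomes $X(L)=h'(z)\,L$. So $\alpha_{X(L)}=h'(z)\,\alpha_L$ and $\mathcal{L}_X\eta_L=h'(z)\,dz-h'(z)\,\alpha_L=-\mathcal{R}_L(f)\,\eta_L$, i.e.\ $X=X_f$.
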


In particular, writing $\eta _{L}(X)=-f$ and $X=X_{f},$ one obtains 
\begin{equation*}
\mathcal{L}_{X_{f}}\eta _{L}=-\mathcal{R}_{L}(f)\eta _{L}.
\end{equation*}%
This result, which is of independent interest, constitutes a fundamental
ingredient for the central theorem that follows.

\begin{theorem}
With the previous notation and assumptions, the function $f=-\eta _{L}(X)$
is a dissipated quantity.
\end{theorem}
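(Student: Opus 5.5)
The plan is to reduce the statement to the contact bracket identity for Hamiltonian vector fields. The Hamiltonian of the system is $E_L$, and the dynamics is $\xi_L = X_{E_L}$. By definition, $f$ is a dissipated quantity precisely when $\{E_L,f\}=0$, that is, when
\begin{equation*}
\xi_L(f) = -\mathcal{R}_L(E_L)\, f .
\end{equation*}
By the preceding proposition, $X$ is the contact Hamiltonian vector field $X_f$ of $f=-\eta_L(X)$ with respect to $\eta_L$. This gives
\begin{equation*}
\eta_L(X_f) = -f, \qquad \mathcal{L}_{X_f}\eta_L = -\mathcal{R}_L(f)\,\eta_L .
\end{equation*}
Using antisymmetry of the bracket, $\{E_L,f\} = -\{f,E_L\} = -\bigl(X_f(E_L) + E_L\,\mathcal{R}_L(f)\bigr)$. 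So the whole statement reduces to proving the single identity
\begin{equation*}
X(E_L) = -\mathcal{R}_L(f)\, E_L .
\end{equation*}
This says that the energy is \emph{dissipated along the symmetry} $X$ at the same rate $\mathcal{R}_L(f)$ at which $L$ itself is rescaled.

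To get this identity, I would write $E_L = \Delta(L) - L$ and compute $X(E_L) = X(\Delta L) - X(L)$. The term $X(L) = -\mathcal{R}_L(f)\,L$ is given by the infinitesimal symmetry hypothesis. For the first term I would use $X(\Delta L) = \Delta(X L) + [X,\Delta](L)$. Here $\Delta(XL) = -\Delta\bigl(\mathcal{R}_L(f)\,L\bigr)$, and one needs two facts. The first is that $\mathcal{L}_X J = 0$ forces $[X,\Delta]=0$; this is plausible because $X$ projects to $Q$ and, from $\mathcal{L}_XJ=0$ in coordinates, $G_i$ is affine in $v$ with the linear part given by $\partial F_i/\partial q^j$, while the $z$-component depends only on $z$, giving commutation with $v^i\partial_{v^i}$ up to the inhomogeneous term. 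The second is that $\mathcal{R}_L(f)$ is $\Delta$-invariant, i.e. of degree zero in $v$. If both hold, $X(\Delta L) = -\mathcal{R}_L(f)\,\Delta L$, and subtracting yields the identity.

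The main obstacle is the homogeneity bookkeeping just described. If $[X,\Delta]\neq 0$ or $\Delta(\mathcal{R}_L(f))\neq 0$ in general, I would switch to an intrinsic route instead. Contracting $\mathcal{L}_X\eta_L = -\mathcal{R}_L(f)\eta_L$ with the first relation of the preceding proposition, $\mathcal{L}_X\alpha_L = \alpha_{X(L)} = -\mathcal{R}_L(f)\alpha_L - L\, d_J\mathcal{R}_L(f)$, one gets $\mathcal{L}_X dz$ and hence the constraints on the $z$-component of $X$. Evaluating on $\Delta$ gives $i_\Delta \alpha_L = 0$, so $\Delta$ is horizontal. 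Combining this with $E_L = -\eta_L(\xi_L)$ and the second-order property $J\xi_L = \Delta$, one can compute $X(E_L)$ via
\begin{equation*}
\mathcal{L}_X\bigl(\eta_L(\xi_L)\bigr) = (\mathcal{L}_X\eta_L)(\xi_L) + \eta_L([X,\xi_L]) .
\end{equation*}
The first term is $\mathcal{R}_L(f)E_L$. The remaining task is to show that $\eta_L([X,\xi_L])$ vanishes, or cancels with the $L$-term. I would use the key relation $\{H,\eta(Y)\}=\eta([X_H,Y])$ from \cite{LL} together with the symmetry hypothesis to show that this term contributes nothing beyond the stated rate.

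Once $X(E_L) = -\mathcal{R}_L(f)E_L$ is established, the computation closes: $\{E_L,f\} = -\bigl(X_f(E_L) + E_L\,\mathcal{R}_L(f)\bigr) = 0$. Hence $f$ is a dissipated quantity.
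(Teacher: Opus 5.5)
There is a genuine gap, and it cannot be filled, because the statement fails under the stated hypotheses.

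\textbf{What is correct.} Your reduction of the claim to $X(E_L)=-\mathcal{R}_L(f)E_L$ is correct. So is your second fact. Since $\mathcal{R}_L(q^i)=0$ and $\mathcal{R}_L(\partial L/\partial v^i)=0$, one has $\mathcal{R}_L(f)=-H'(z)$ for $f=F_i\,\partial L/\partial v^i-H(z)$, where $H(z)$ is the $\partial/\partial z$-component of $X$.

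\textbf{Where it breaks.} The gap is your first fact, $[X,\Delta]=0$. Projectability and $\mathcal{L}_XJ=0$ give only $G_j=v^k\,\partial F_j/\partial q^k+g_j(q,z)$. Hence $[X,\Delta]=g_j\,\partial/\partial v^j$, and the inhomogeneous term you mention is exactly what survives. Your own computation then yields
\begin{equation*}
X(E_L)=-\mathcal{R}_L(f)E_L+[X,\Delta](L),\qquad\text{i.e.}\qquad \{E_L,f\}=-g_j\frac{\partial L}{\partial v^j}.
\end{equation*}
Your fallback route is circular and does not remove this term. By the relation $\{H,\eta(Y)\}=\eta([X_H,Y])$ that you invoke, $\eta_L([X,\xi_L])=\{E_L,f\}$. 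So showing that this term vanishes is just the original claim restated.

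\textbf{Counterexample.} The stated hypotheses do not force $g_j\,\partial L/\partial v^j=0$. Take $Q=\mathbb{R}$, the regular Lagrangian $L=e^{v}$, and $X=\partial/\partial v+z\,\partial/\partial z\in\mathfrak{X}_{proj}(TQ\times\mathbb{R})$. Then:
\begin{itemize}
\item $\mathcal{L}_XJ=0$, $\eta_L=dz-e^{v}dq$ and $\mathcal{R}_L=\partial/\partial z$;
\item $f=-z$ and $\mathcal{R}_L(f)=-1$;
\item $X(L)=e^{v}=-\mathcal{R}_L(f)L$, so $X$ is an infinitesimal symmetry (indeed $\mathcal{L}_X\eta_L=\eta_L$).
\end{itemize}
However, $\xi_L=v\,\partial/\partial q+e^{v}\partial/\partial z$ and $\mathcal{R}_L(E_L)=0$. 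Therefore $\{E_L,f\}=\xi_L(f)=-e^{v}\neq 0$, and $f$ is not dissipated.

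\textbf{What is missing.} The missing ingredient is an extra hypothesis, not an argument: $[X,\Delta](L)=0$. A sufficient case is $[X,\Delta]=0$, i.e.\ $X=Y^{c}+H(z)\partial/\partial z$, which contains the complete-lift setting of \cite{LL}. Under that hypothesis your first route is a complete, self-contained proof.

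\textbf{The paper's proof.} The paper's argument carries the same hidden assumption. It imports $\{E_L,f\}=\{L,f\}$ from \cite{LL} and evaluates the right-hand side as $X_f(L)+\mathcal{R}_L(f)L=0$. But for projectable $X$ with $\mathcal{L}_XJ=0$ one actually has
\begin{equation*}
\{E_L,f\}=X(L)+\mathcal{R}_L(f)L-[X,\Delta](L).
\end{equation*}
Your computation is what pinpoints the missing term.
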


Indeed, the following identity holds \cite{LL}%
\begin{equation*}
\{E_{L},f\}=\{L,f\}.
\end{equation*}%
Expanding the right-hand side, one obtains 
\begin{equation*}
\{L,f\}=X_{f}(L)+\mathcal{R}_{L}(f)L=-\mathcal{R}_{L}(f)\cdot L+\mathcal{R}%
_{L}(f)\cdot L=0.
\end{equation*}

Extending the results of Proposition 25, we now study dissipation laws
associated with elements of the Lie algebra $\mathfrak{g}$ of a Lie group $G$
acting on a configuration manifold $Q.$ We assume that this action lifts
naturally to an action on the extended phase space $TQ\times \mathbb{R.}$ In
this setting, we analyze how dissipation laws govern the behavior of
infinitesimal generators of the group action in a non-conservative context,
thereby linking symmetry properties with their corresponding dissipative
dynamics.

More precisely, given a contact Lagrangian system determined by a regular
Lagrangian $L:TQ\times \mathbb{R}\rightarrow \mathbb{R}$, our goal is to
examine the behavior of the associated contact Lagrangian form $\eta
_{L}=dz-\alpha _{L},$ under the induced action of the group.

\begin{definition}
Let $\varphi $ be a diffeomorphism of $Q$ and $L\in C^{\infty }(TQ\times 
\mathbb{R}).$ Denote by $\overline{\varphi }:TQ\times \mathbb{R\rightarrow }%
TQ\times \mathbb{R}$ the product of the tangent lift of $\varphi $ and the
identify on $\mathbb{R}.$ We say that $\varphi $ is a gauge symmetry of the
contact Lagrangian system defined by $L,$ if $\overline{\varphi }^{\ast
}L=L+\alpha ,$ where $\alpha $ is the pullback of a closed $1-$form on $Q$.
\end{definition}

We adopt the following notation. Given a gauge symmetry $\varphi ,$ we write%
\begin{equation*}
\widetilde{L}:=\overline{\varphi }^{\ast }L=L+\alpha \equiv L+f_{\alpha }
\end{equation*}%
where $f_{\alpha }:TQ\rightarrow \mathbb{R}$ denotes the function induced by
the $1-$form $\alpha $ on $Q$. Accordingly, the associated contact
Lagrangian $1-$form transforms as%
\begin{equation*}
\eta _{\widetilde{L}}=dz-i_{J}d\widetilde{L}=dz-i_{J}dL-f_{\alpha }\equiv
dz-i_{J}dL-\alpha .
\end{equation*}%
Thus, we obtain two contact structures on $TQ\times \mathbb{R,}$ determined
by the contact forms $\eta _{L}$ and $\eta _{\widetilde{L}}.$ Despite this
difference, their associated Lagrangian energies agree%
\begin{equation*}
E_{\widetilde{L}}=\Delta (\widetilde{L})-\widetilde{L}=\Delta (L)+f_{\alpha
}-(L+f_{\alpha })=E_{L}.
\end{equation*}%
Moreover, the corresponding Reeb vector fields are the same, 
\begin{equation*}
\mathcal{R}_{L}=\mathcal{R}\widetilde{_{L}},
\end{equation*}%
and, since $\alpha $ is closed, the exterior derivatives of the contact
forms also agree,%
\begin{equation*}
d\eta _{L}=d\eta _{\widetilde{L}}.
\end{equation*}%
The following proposition clarifies how contact Lagrangian systems with
gauge symmetries are incorporated into the geometric framework developed
above.

\begin{proposition}
Let $G$ be a Lie group of transformations on $Q$ such that its natural lift
to $TQ\times \mathbb{R}$ satisfies 
\begin{equation*}
g^{\ast }L=L+\alpha ,\text{\quad }g\in G,
\end{equation*}%
where $\alpha $ is a closed $1-$form on $Q.$ Assume moreover that the
induced action of $G$ preserves the Hamiltonian vector field $\xi _{L}$ of
the contact Lagrangian system $(TQ\times \mathbb{R},\eta _{L},E_{L})$, that
is, 
\begin{equation*}
g_{\ast }\xi _{L}=\xi _{L},\quad \forall g\in G.
\end{equation*}%
Then, for each $\xi \in \mathfrak{g}$, the function $\eta _{L}(X_{\xi })$ is
a dissipated quantity on $TQ\times \mathbb{R}$. In addition, if $H^{1}(Q)=0$%
, there exists a function $h_{\xi }$ on $Q$ such that $\mathcal{L}_{X_{\xi
}}\eta _{L}=dh_{\xi }$, and the function $h_{\xi }$ is a first integral of
the dynamical vector field $\xi _{L}$.\medskip
\end{proposition}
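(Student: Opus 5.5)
Since $G$ preserves $\xi _{L}$, every fundamental vector field is a dynamical symmetry of $(TQ\times \mathbb{R},\eta _{L},E_{L})$: differentiating $g_{\ast }\xi _{L}=\xi _{L}$ along $g=\exp (s\xi )$ at $s=0$ gives $[X_{\xi },\xi _{L}]=0$, i.e. $\mathcal{L}_{X_{\xi }}\xi _{L}=0$. The first assertion then follows from the relation of \cite{LL} recalled after the definition of dynamical symmetry,
\begin{equation*}
\{E_{L},\eta _{L}(X_{\xi })\}=\eta _{L}([\xi _{L},X_{\xi }])=0 .
\end{equation*}
Hence $\eta _{L}(X_{\xi })$ is a dissipated quantity. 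Note that this step uses only the invariance of $\xi _{L}$; the gauge transformation of $L$ plays no role here.

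For the second assertion I would compute $\mathcal{L}_{X_{\xi }}\eta _{L}$ from the gauge behaviour of $L$. Writing $g_{s}=\exp (s\xi )$, the identities preceding the statement give
\begin{equation*}
g_{s}^{\ast }\eta _{L}=\eta _{g_{s}^{\ast }L}=\eta _{L}-\alpha _{s},
\end{equation*}
with $\alpha _{s}$ the pulled-back closed $1$-form on $Q$. Two facts justify the first equality: the lifted action commutes with $J$ (tangent lifts preserve the almost tangent structure), and it fixes $z$. Differentiating at $s=0$ gives $\mathcal{L}_{X_{\xi }}\eta _{L}=-\dot{\alpha}$, where $\dot{\alpha}=\frac{d}{ds}\big|_{s=0}\alpha _{s}$. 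This form is closed, because the closed forms $\alpha _{s}$ on $Q$ form a vector space and the derivative of a family of closed forms is closed. Equivalently, one can invoke the Proposition stated earlier, $\mathcal{L}_{X}\alpha _{L}=\alpha _{X(L)}$ for $X$ preserving $J$, which yields $\mathcal{L}_{X_{\xi }}\alpha _{L}=\alpha _{X_{\xi }(L)}$ with $X_{\xi }(L)$ the infinitesimal gauge term. If $H^{1}(Q)=0$, this closed form is exact on $Q$, say $-\dot{\alpha}=dh_{\xi }$ with $h_{\xi }\in C^{\infty }(Q)$, pulled back to $TQ\times \mathbb{R}$. This gives $\mathcal{L}_{X_{\xi }}\eta _{L}=dh_{\xi }$.

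To conclude that $h_{\xi }$ is a first integral, I would apply the conserved-quantity criterion recalled before Proposition 25. It needs three hypotheses: $X_{\xi }$ is a dynamical symmetry (already shown), $\mathcal{L}_{X_{\xi }}\eta _{L}=dh_{\xi }$ (just obtained), and $X_{\xi }(E_{L})=0$. For the last one, use $E_{\widetilde{L}}=E_{L}$, proved above for gauge-transformed Lagrangians, together with naturality of $\Delta $ under tangent lifts, to get $g^{\ast }E_{L}=E_{g^{\ast }L}=E_{L}$, hence $X_{\xi }(E_{L})=0$. To make the argument self-contained I would verify the criterion directly with Cartan's formula:
\begin{equation*}
\xi _{L}(h_{\xi })=i_{\xi _{L}}\mathcal{L}_{X_{\xi }}\eta _{L}=\mathcal{L}_{X_{\xi }}(\eta _{L}(\xi _{L}))-\eta _{L}([X_{\xi },\xi _{L}])=-X_{\xi }(E_{L})-0=0,
\end{equation*}
using $\eta _{L}(\xi _{L})=-E_{L}$.

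I expect the main obstacle to be the sign and identification bookkeeping. The term $\alpha $ in $g^{\ast }L=L+\alpha $ is really the function $f_{\alpha }(q,v)=\alpha _{q}(v)$, and one must check two things: that $i_{J}df_{\alpha }$ equals the pullback of $\alpha $, and that the infinitesimal version $\dot{\alpha}$ of a group-dependent family $\alpha _{g}$ is again closed and basic. If the $\alpha $ in the statement is meant to be independent of $g$, the group law forces $\alpha $ to vanish, so it must be read as depending on $g$. Once this dependence is made explicit, the remaining steps are formal.
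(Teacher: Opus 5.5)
Your argument is correct and is essentially the one the paper has in mind. The paper writes out no separate proof, but it prepares exactly your ingredients: the identity $\{H,\eta(Y)\}=\eta([X_H,Y])$ for a dynamical symmetry $Y$, the conserved-quantity criterion under $\mathcal{L}_X\eta=dg$ and $X(H)=0$, and the gauge identities $\eta_{\widetilde{L}}=\eta_L-\alpha$ and $E_{\widetilde{L}}=E_L$.

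Your observation that $\alpha$ must depend on $g$ is right, and more is true. Since $\xi_L$ is a SODE, $\eta_L(\xi_L)=-E_L$ gives $dz(\xi_L)=L$; because the lifted action fixes $z$, the hypothesis $g_*\xi_L=\xi_L$ already forces $g^*L=L$. So every $\alpha_g$ vanishes and $h_\xi$ is constant, a degenerate situation your proof handles without change.
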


\noindent \textbullet\ We conclude by briefly surveying representative
contributions that explore the interplay between symmetries, conservation
laws, and dissipation in contact Hamiltonian and related geometric settings.
Gaset \textit{et al.} in \cite{GLR} analyze symmetries and conservation laws
for time-dependent contact Hamiltonian systems, establishing Noether-type
results in the presence of explicit time dependence and contact dissipation.
Rivas\textit{\ et al.} in \cite{RSS} study Lagrangian field equations in $k-$%
contact geometry as a geometric framework for dissipative field theories.
They establish Noether-type theorems relating infinitesimal symmetries to
balance laws given by conserved currents modified by dissipative terms.

Bravetti \textit{et al.} \cite{BGT} introduce conformal symmetries of the
contact Hamiltonian that generate Noether-type invariants, identifying
thermodynamic entropy as a conserved quantity along dissipative
trajectories. They extend the classical Noether theorem to Herglotz-type
equations and provide a geometric classification of symmetries into strict,
conformal, and dynamical classes according to their role in the contact
dynamics. Bravetti \textit{et al.} \cite{BJS} analyze scaling symmetries
within the framework of contact geometry and develop a natural contact
reduction procedure. They show how such symmetries can lead to stable
periodic orbits in systems with time-dependent friction, providing a
geometric realization of Poincar\'{e}'s dream, namely, the idea that
dissipative mechanisms may give rise to structurally stable periodic motion.

Zadra and Seri \cite{ZS} study symmetries of contact Hamiltonian systems.
They distinguish between strict symmetries and conformal symmetries of the
contact structure, and explain how each type affects the dynamics. They also
analyze the local Lie algebra formed by these symmetries and obtain
conditions under which the corresponding symmetry distributions are
integrable.

\section{Dirac structures on Contact Manifolds}

Dirac structures, first introduced in \cite{C,CW}, were developed as a
unifying framework generalizing both symplectic and Poisson geometries.
Defined via the Courant bracket, they encode, in a single geometric object,
both the closedness of a $2-$form and the Poisson property of a bivector
field. Originating in Dirac's work on constrained systems, they have become
an important tool in mathematical physics.

Consider the Pontryagin bundle $TM\oplus T^{\ast }M$ over a differentiable
manifold $M$. This bundle carries a natural fiberwise non-degenerate
symmetric bilinear form $\langle \cdot ,\cdot \rangle $ defined by

\begin{equation*}
\langle (X,\alpha ),(Y,\beta )\rangle =\alpha (Y)+\beta (X),
\end{equation*}%
where $X,Y\in T_{p}M$ and $\alpha ,\beta \in T_{p}^{\ast }M$ for $p\in M.$

A Dirac structure on $M$ is a subbundle $\mathbb{L}\subset TM\oplus T^{\ast
}M$ satisfying $\mathbb{L}=\mathbb{L}^{\perp }$ where $\mathbb{L}^{\perp }$
is the orthogonal complement of $\mathbb{L}$ with respect to $\langle \cdot
,\cdot \rangle .$ In particular, each fiber of $\mathbb{L}$ has dimension $%
\dim M$. To impose integrability, one considers the Courant bracket on $%
TM\oplus T^{\ast }M$, a bilinear operation generalizing the Lie bracket of
vector fields by incorporating the Lie derivative and the exterior
differential.

For sections $(X,\alpha )$ and $(Y,\beta )$ IN $\Gamma \left( TM\oplus
T^{\ast }M\right) $, it is defined by 
\begin{equation*}
\lbrack (X,\alpha ),(Y,\beta )]=\left( [X,Y],\mathcal{L}_{X}\beta -\mathcal{L%
}_{Y}\alpha -\frac{1}{2}d(\alpha (Y)-\beta (X))\right) .
\end{equation*}%
A maximally isotropic subbundle $\mathbb{L\subset }TM\oplus T^{\ast }M$ is
said to be integrable if its space of sections $\Gamma (\mathbb{L})$ is
closed under the Courant bracket, that is, 
\begin{equation*}
\lbrack \Gamma (\mathbb{L}),\Gamma (\mathbb{L})]\subset \Gamma (\mathbb{L}).
\end{equation*}%
An integrable maximally isotropic subbundle is commonly referred to simply
as a Dirac structure.

This condition includes, as special cases, the classical geometric
structures studied in Dirac geometry. Indeed, if $\mathbb{L}$ is the graph
of a 2-form $\omega $ then the closure of $\Gamma (\mathbb{L})$ under the
Courant bracket is equivalent to the condition $d\omega =0$, yielding a
presymplectic structure. Similarly, if $\mathbb{L}$ is the graph of a
bivector field $\pi $ integrability is equivalent to the vanishing of the
Schouten--Nijenhuis bracket $\left[ \pi ,\pi \right] $ so that $\pi $
defines a Poisson structure.

For a Dirac structure $\mathbb{L}$ on a manifold $M$, one can associate a
Poisson algebra of admissible functions on $M,$ \cite{Cardona2012}. This
algebra generalizes the classical Poisson algebra arising in symplectic and
Poisson geometry and provides a natural setting for the identification of
conserved quantities and first integrals in systems subject to nonholonomic
constraints or more general implicit dynamics. Moreover, it offers a
geometric framework for the formulation of implicit Hamiltonian systems, in
which algebraic constraints are intrinsically coupled with differential
equations.

In this context, a smooth function $f$ on $M$ is called $\mathbf{\mathbb{L}}%
- $admissible if there exists a smooth vector field $X_{f}$ on $M$ such that 
\begin{equation*}
(X_{f},df)\in \Gamma (\mathbb{L}).
\end{equation*}%
Any such vector field $X_{f}$ is called a Hamiltonian vector field
associated with $f$. The space of $\mathbb{L}$-admissible functions on $M$
is denoted by $C_{\mathbb{L}}^{\infty }(M)$. Although the Hamiltonian vector
field associated with a given function is not unique in general, the bracket 
\begin{equation*}
\{f,g\}=X_{f}(g)
\end{equation*}%
s well defined, satisfies the Leibniz rule and the Jacobi identity, and
endows $C_{\mathbb{L}}^{\infty }(M)$ with the structure of a Poisson
algebra. In particular, if $f$ and $g$ are $\mathbb{L}$-admissible
functions, their product $fg$ is again $\mathbb{L}$-admissible. and so is
the bracket $\{f,g\}$.

In \cite{Cardona2016}, contact structures on smooth manifolds are described
within the framework of Dirac geometry, providing one of the first
systematic approaches in this direction. Let $(M,\alpha )$ be a cooriented
contact manifold, and denote by $\mathcal{R}$, its Reeb vector field. The
tangent and cotangent bundles admit the decompositions

\begin{equation*}
TM=\ker \alpha \oplus \langle \mathcal{R}\rangle ,\quad T^{\ast }M=\langle
\alpha \rangle \oplus (\mathcal{R})_{0},
\end{equation*}%
where $(\mathcal{R})_{0}=\{\theta \in T^{\ast }(M):\theta (\mathcal{R})=0\}$
is the annihilator of $\mathcal{R}.$ These splittings naturally lead to the
following subbundles of the Pontryagin bundle $TM\oplus T^{\ast }M,$

\begin{equation*}
L_{0}=\ker \alpha \oplus \langle \alpha \rangle ,\quad L^{0}=\langle 
\mathcal{R}\rangle \oplus (\mathcal{R})_{0}.
\end{equation*}%
Both $L_{0}$ and $L^{0}$ are isotropic with respect to the natural pairing
on $TM\oplus T^{\ast }M$.

To determine whether either of these subbundles defines a Dirac structure,
one must examine their integrability with respect to the Courant bracket.
The subbundle $L_{0},$ cannot be integrable, since this would imply the
Frobenius integrability of the contact distribution $\ker \alpha $. In
contrast, $L^{0}$ is closed under the Courant bracket \cite{Cardona2016}$.$

Nevertheless, the Poisson structure induced by $L^{0}$ is trivial. Indeed,
the Hamiltonian vector fields associated with $L^{0}-$admissible functions
are necessarily multiples of the Reeb vector field $\mathcal{R},$ and
admissibility requires these functions to be invariant under the flow of $%
\mathcal{R}.$ As a consequence, for any $f,g\in C_{\mathbb{L}^{0}}^{\infty
}(M),$ the associated Poisson bracket satisfies $\{f,g\}=0.$

To avoid the triviality of the associated Poisson algebra, Cardona considers
the Dirac structure associated with the closed $2-$form $d\alpha ,$

\begin{equation*}
\mathbb{L}=\{(X,-i_{X}d\alpha )\mid X\in \mathfrak{X}(M)\}.
\end{equation*}%
Since $d\alpha $ is closed, this subbundle is maximally isotropic and closed
under the Courant bracket, hence it defines a Dirac structure on $M$.

The space of admissible functions associated with $\mathbb{L}$ consists of
those smooth functions that are invariant under the flow of the Reeb vector
field $\mathcal{R}$. The corresponding Poisson bracket is given by $%
\{f,g\}=i_{\Xi (df)}(dg)$, where $\Xi :\left( \mathcal{R}\right)
_{0}\rightarrow \ker \alpha $ denotes the inverse of the vector bundle
isomorphism $\Lambda :\ker \alpha \rightarrow \left( \mathcal{R}\right)
_{0}:X\mapsto -i_{X}(d\alpha ).$

Fundamental contributions to the theory of Hamiltonian actions and moment
maps in Dirac geometry were developed by Bursztyn and Crainic \cite%
{Bursztyn2007}, who extended the Poisson formalism by interpreting moment
maps as Dirac morphisms and clarifying the role of Lie algebra actions and
symmetries in this setting. Subsequently, the interplay between contact
moment maps and Dirac structures was analyzed by Cardona \cite{Cardona2016},
who showed how contact Hamiltonian systems can be naturally interpreted
within the Dirac framework.

As discussed in a previous section, given a smooth action of a Lie group $G$
on a contact manifold $(M,\alpha )$ that preserves the contact $1-$form $%
\alpha $ (i.e., $\mathcal{L}_{X_{\xi }}\alpha =0$ for all $\xi \in \mathfrak{%
g,}$ where $X_{\xi }$ is the fundamental vector field associated with $\xi $%
), we adopt here the functional formulation of the corresponding contact
moment map:

\begin{equation*}
\mu _{\alpha }:\mathfrak{g}\rightarrow C^{\infty }(M),\quad \mu _{\alpha
}(\xi )=i_{X_{\xi }}\alpha ,
\end{equation*}%
rather than the dual map $\mu _{\alpha }:M\rightarrow \mathfrak{g}^{\ast }.$

A Courant algebra over $\mathfrak{g}$, in the sense developed by Bursztyn
and Crainic \cite{Bursztyn-Crainic} (see also \cite{Bursztyn2007} for
related constructions in the reduction of Courant algebroids), can be
described as an exact sequence of Leibniz algebras%
\begin{equation*}
0\rightarrow \mathfrak{h}\rightarrow \mathfrak{a}\overset{\pi }{\rightarrow }%
\mathfrak{g\rightarrow }0,
\end{equation*}%
where $\mathfrak{h}$ is an abelian ideal. When $G$ is compact, Cardona \cite%
{Cardona2016}, studies the extension of the infinitesimal action $\psi :%
\mathfrak{g\rightarrow X(}M\mathfrak{)}$ to a Courant algebra morphism $\rho
:\mathfrak{a\rightarrow }\Gamma \mathfrak{(}TM\oplus T^{\ast }M\mathfrak{)}$%
, fitting into the commutative diagram

\begin{equation*}
\begin{array}{ccccccccc}
0 & \rightarrow & \mathfrak{h} & \rightarrow & \mathfrak{\alpha } & \overset{%
\pi }{\rightarrow } & \mathfrak{g} & \rightarrow & 0 \\ 
&  & {\downarrow \nu } &  & {\downarrow \rho } &  & {\downarrow \psi } &  & 
\\ 
0 & \rightarrow & \Gamma (T^{\ast }M) & \rightarrow & \Gamma (TM\oplus
T^{\ast }M) & \rightarrow & \mathfrak{X}(M) & \rightarrow & 0%
\end{array}%
\end{equation*}

A Dirac Moment Map\textbf{\ }for an extended action $\rho $ is a $\mathfrak{g%
}$-equivariant map $\mu :\mathfrak{h}\rightarrow C^{\infty }(M)$ satisfying $%
\nu =d\mu $; see \cite{Cardona2016} for details$.$ In this way, the
classical contact moment map fits naturally into the general Dirac moment
map framework developed in \cite{Bursztyn-Crainic}.

This perspective places Dirac structures within a unified geometric
framework for the reduction of nonholonomic systems via Dirac reduction,
encompassing both Lagrangian and Hamiltonian formulations, as well as
implicit (possibly degenerate) Lagrangian systems; see, for instance, \cite%
{yoshimura-marsden,jotz-ratiu}.

Since the modern formulation of mechanical systems with symmetry introduced
by Marsden and Weinstein~\cite{MarsdenWeinstein1974}, the study of
nonholonomic constraints and their associated reduction procedures has
attracted considerable attention in the literature~\cite{BS,
CushmanEtAl1995, vanDerSchaftMaschke1994}. A consistent Hamiltonian
formulation of such systems can be naturally formulated within the geometric
framework of Dirac structures~\cite{BlochCrouch1997, DalsmoVanDerSchaft1998,
vanDerSchaftMaschke1995b}. In this setting, the notion of an implicit
Hamiltonian system, characterized by a coupled system of differential and
algebraic equations, becomes fundamental. From a physical viewpoint, Dirac
structures provide an intrinsic description of constrained dynamics by
encoding the geometry of the system through the interconnection of
subsystems~\cite{BlochCrouch1997, vanDerSchaftMaschke1994}.

This viewpoint provides the fundamental motivation for introducing Dirac
structures in the present work, as they offer a natural geometric framework
for the formulation of constrained and nonconservative dynamics,
particularly in the context of contact geometry. More precisely, let $M$ be
a manifold endowed with a Dirac structure $\mathbb{L}$, and let $%
H:M\rightarrow \mathbb{R}$ be a smooth function (the Hamiltonian). The
implicit Hamiltonian system associated with the triple $(M,\mathbb{L},H)$ is
defined by the condition

\begin{equation*}
(\dot{x},dH(x))\in \mathbb{L}_{x(t)}.
\end{equation*}%
A vector field $f\in \mathfrak{X}(M)$ is said to be an infinitesimal
symmetry of the Dirac structure $\mathbb{L}$ if,

\begin{equation*}
L_{f}\Gamma (\mathbb{L})\subset \Gamma (\mathbb{L}).
\end{equation*}%
In this setting, van der Schaft \cite{vanDerSchaft1997}, studies the role of
symmetries in implicit Hamiltonian systems associated with Dirac structures.
If a vector field $f$ tis an infinitesimal symmetry of the Dirac structure $%
\mathbb{L}$ and preserves the Hamiltonian function $H$, then, under suitable
regularity assumptions, it is shown that $f$ is tangent to the constraint
manifold $M_{c}$. Moreover, the restriction $f_{c}$ of $f$ to $M_{c}$
commutes with the reduced Hamiltonian vector field $X_{H_{c}}$. This result
clarifies the compatibility between Dirac symmetries and the reduced
dynamics, and relates symmetry preservation to conservation properties of
the system.

The same work further analyzes the relationship between the existence of a
symmetry $f$ of a Dirac structure $\mathbb{L}$ and the existence of
Hamiltonian functions $F:M\rightarrow \mathbb{R}$ such that $(f,dF)\in 
\mathbb{L}.$ Within this framework, conservation results analogous to
Noether theorem arise for implicit Hamiltonian systems, where $F$ appears as
a conserved quantity for the reduced dynamics on $M_{c}$. These ideas,
developed in \cite{vanDerSchaft1997}, provide a natural motivation for
extending the analysis to dissipative settings. In particular, in contact
systems, the study of symmetries may serve as an effective tool for
identifying dissipation laws, in line with the perspective outlined in the
previous section.\medskip

\noindent \textbullet\ We conclude this section with a brief overview of
further developments on Dirac structures in contact and related geometric
frameworks. Burbulla in \cite{burbulla} extends the formalism of Dirac
structures to an extended bundle adapted to contact geometry. This
construction is based on the splitting $TM=\ker \alpha \oplus \left\langle 
\mathcal{R}\right\rangle $ induced by the Reeb vector field, together with
the subbundle $\mathrm{Ann}(\ker \alpha )\subset T^{\ast }M$ on a contact
manifold $M$. M\u{a}rcut \cite{marcut} shows that contact structures can be
realized within Dirac geometry via suitable gauge transformations of the
Courant bracket. This provides a precise way to interpret contact geometry
inside the classical Dirac framework.

Yoshimura and de Le\'{o}n \cite{yoshimura-deleon} formulate nonholonomic
mechanical systems within the framework of Dirac structures, a perspective
that naturally suggests extensions to contact and dissipative settings.
Iglesias-Ponte and Wade \cite{iglesias-ponte wade} extend the integration
theory of Dirac structures to the Jacobi and contact setting. In particular,
they prove that integrable Dirac--Jacobi structures admit a global
integration in terms of precontact groupoids. This perspective is developed
in a systematic bundle-theoretic framework by Vitagliano \cite{V}, who
introduces Dirac--Jacobi bundles as a natural extension of Dirac geometry
adapted to Jacobi and contact structures.

In the influential works of Yoshimura and Marsden \cite%
{yoshimura-marsden-b,yoshimura-marsden-c}, a Dirac formulation of implicit
Lagrangian systems is introduced, providing a geometric framework
potentially adaptable to contact Lagrangian settings. Finally, Gay-Balmaz
and Yoshimura in \cite{gay-balmaz-yoshimura} develop a Dirac-structure-based
formulation of nonequilibrium thermodynamics, providing a geometric
framework for dissipative systems that is conceptually related to contact
and Jacobi approaches.

\section{Final remarks}

Throughout this review, we have seen that contact structures and their
associated Hamiltonian dynamics have far-reaching implications, providing
powerful tools in settings where classical mechanics and symplectic
frameworks face inherent limitations. To conclude, we present two additional
topics that aim to further deepen the understanding of the intrinsic
structure of contact geometry.

A major area where the full scope of the contact setting---with its
geometric and dynamical features, including Hamiltonian vector fields,
contact moment maps, and Jacobi or Poisson algebraic structures, finds a
particularly rich expression is \textit{Geometric Quantization}. This
framework, which traces back to Dirac, aims to construct a quantum theory
from a classical one by exploiting geometric structures defined on the
classical phase space, usually a symplectic manifold.

The guiding objective is to preserve key analogies between classical and
quantum theories, such as the correspondence between classical observables
and quantum operators, as well as between classical and quantum states. The
starting point is therefore a symplectic manifold $(M,\omega )$, which
serves as the classical phase space.

We provide a very brief technical introduction following the excellent
review \cite{echeverria} and the monograp \cite{wood}.

We introduce a complex line bundle $(L,\nabla ,h)$ over $M,$ equipped with a
Hermitian connection $\nabla $, compatible with the Hermitian metric $h$,
whose curvature satisfies $\mathrm{curv}(\nabla )=-i\omega /\hbar $. The
pre-quantum Hilbert space is defined as the square-integrable sections of $%
L. $

A polarization $P$ is an integrable distribution of Lagrangian subspaces of
the complex tangent bundle $T^{\mathbb{C}}M$. Upon choosing a polarization,
the prequantum Hilbert space is restricted to polarized sections, namely
those sections annihilated by $\nabla _{X}$ for every vector field $X$ in $P$%
. In the $Spin^{c}$ formulation, the quantum space $Q(M)$ is defined as the
kernel of the Dirac operator:%
\begin{equation*}
Q(M)=\ker (D_{{}}^{+})\oplus \ker (D_{{}}^{-}).
\end{equation*}%
where $D_{{}}^{+}$ and $D_{{}}^{-}$ denote its even and odd components,
respectively.

The article \cite{Fitz-a} by Fitzpatrick extends the principles of classical
geometric quantization to the setting of contact manifolds. Let $(M,E,\theta
)$ be a compact cooriented contact manifold, where the subbundle $E=\ker
\theta \subset TM$ denotes the contact distribution. We regard the functions
in $C_{b}^{\infty }(M)$ as classical observables, where $C_{b}^{\infty }(M)$
is the subalgebra of $C^{\infty }(M)$ consisting of functions $f\in
C^{\infty }(M)$ satisfying $\xi f=0,$ where $\xi $ is the Reeb vetor field.
We assume that a compact Lie group $G$ acts on $M$ by contactomorphisms. By
averaging, the contact form $\theta $ may be chosen to be $G-$invariant.

Let $E\subset TM$ be equipped with the symplectic form $\Omega =\left.
d\theta \right\vert _{E},$ and let $\pi :L\rightarrow M$ be a hermitian line
bundle equipped with metric $h$ and connection $\nabla .$ We say that $%
(L,h,\nabla )\rightarrow (M,E,\Omega )$ is a quantum bundle if the
restriction of the curvature form of $\nabla $ to $E\otimes E$ is equal to $%
i\Omega .$

In analogy with the symplectic case, we consider the Hilbert space $H=\Gamma
_{L^{2}}(M.L).$ The polarized sections are the $CR-$holomorphic sections of
the bundle $L$, that is, those sections annihilated by $\nabla _{Z}$ for all 
$Z\in E_{1,0}.$ The map form $C_{b}^{\infty }(M)$ to the space of
skew-Hermitian operators on $H,$ given by%
\begin{equation*}
f\mapsto A_{f}=\nabla _{X_{f}}+i\pi ^{\ast }f
\end{equation*}%
is a Lie algebra homomorphism. In particular, since the components of the
contact momentum map $\Phi _{\theta }:M\rightarrow \mathfrak{g}^{\ast }$
belongs to $C_{b}^{\infty }(M),$ we can define, for each $X\in G,$ the
operator 
\begin{equation*}
A_{X}=\nabla _{X_{M}}+i\pi ^{\ast }\Phi _{\theta }^{X}.
\end{equation*}%
On the symplectic vector bundle $E,$ we choose a compatible complex
structure $J\in End(E).$ The $i-$eigenvalue $E_{1,0}\subset T_{\mathbb{C}}M$
of $J$ then defines an almost $CR-$structure on $M.$ Let $g$ be a Riemannian
metric on $M$ such that $(J,g,\theta ,\xi )$ is a contact metric structure.
This choice induces a natural connection on the Clifford bundle $Cl(E),$
compatible with its Clifford multiplication. Within this framework, the
contact Dirac operator $\mathcal{D}_{b}$ is constructed, and the quantum
space $Q(M)$ is defined as $\ker (D_{b}^{+})\oplus \ker (D_{b}^{-})$ in
direct analogy with the kernel of the Dirac operator arising in the
symplectic $Spin^{c}$ formulation of geometric quantization. This approach
is developed in \cite{Fitz-b}, where Fitzpatrick introduces the notion of a
transversally elliptic Dirac operator on contact manifolds and derives an
equivariant index formula for such operators. This index formula is then
used to define the character of the quantum representation associated with $%
Q(M)$, providing a precise description of how the Lie group $G$ acts on the
quantum space. Fitzpatrick's construction thus offers a bridge between
contact-geometric quantization and representation theory. In this framework,
symmetry reduction is reflected at the level of the equivariant index
arising in contact quantization. The index, viewed as an element of the
representation ring of the symmetry group, encodes the action of $G$ on the
quantum space, so that reduction corresponds to selecting the appropriate
invariant components, thereby determining the resulting quantum
representations.

As the final topic of this report, we briefly address a subject of
considerable recent interest that is tangential to our main theme, yet
relies on many of the same geometric ingredients. \textit{Generalized
Geometry} was introduced by Hitchin \cite{hitchin} as an extension of
differential geometry to the Pontryagin bundle $E=TM\oplus T^{\ast }M.$ Its
fundamental structure is an exact Courant algebroid on $E$, defined by the
Dorfman bracket: 
\begin{equation*}
(X_{1},\xi _{1})\circ _{H}(X_{2},\xi _{2})=\left( \left[ X_{1},X_{2}\right]
,L_{X_{1}}\xi _{2}-i_{X_{2}}\xi _{1}-i_{X_{1}}i_{X_{2}}H\right)
\end{equation*}%
where $H$ is a closed $3-$form on $M$ encoding the intrinsic geometry of $E.$
This structure \cite{wright} unifies and generalizes classical geometric
structures on $M$, including complex, metric, K\"{a}hler, and Calabi--Yau
structures, as well as Clifford actions of sections of $\Gamma (TM\oplus
T^{\ast }M)$ on spinors.

The odd-dimensional counterpart of generalised geometry, namely generalised
contact geometry, is still at an early stage of development \cite{wright,
iglesias-wade, vaisman-a, vaisman-b, poon-wade}. In this work, we begin by
reviewing the characterization of contact $1-$forms in terms of $\mathcal{E}%
^{1}(M)-$Dirac structures, as introduced by Wade \cite{wade}. In this
setting, we consider the vector bundle $\mathcal{E}^{1}(M)=(TM\times \mathbb{%
R})\oplus (T^{\ast }M\times \mathbb{R}),$ endowed with the bilinear form 
\begin{equation*}
\left\langle (X_{1},f_{1})+(\alpha _{1},g_{1}),(X_{2},f_{2})+(\alpha
_{2},g_{2})\right\rangle =\alpha _{1}(X_{2})+\alpha
_{2}(X_{1})+f_{1}g_{2}+f_{2}g_{1}
\end{equation*}%
together with an extension of the Courant bracket $\left[ .,.\right] ,$
defined in \cite{wade}. A subbundle $L$ of $\mathcal{E}^{1}(M)$ is said to
define an $\mathcal{E}^{1}(M)-$Dirac structure it it is $\left\langle
.,.\right\rangle -$maximally isotropic and integrable. A notable example is
the maximally isotropic sub-bundle $L_{{}}^{(\omega ,\eta )}$ of $\mathcal{E}%
^{1}(M)$ associated with a $2-$form $\omega $ a $1-$form $\eta $ on $M,$
defined pointwise by%
\begin{equation*}
L_{x}^{(\omega ,\eta )}=\left\{ (X,f)_{x}+\left( i_{X}\omega +f\eta
,-i_{X}\eta \right) _{x}\right\} \text{\quad\ }X\in \mathfrak{X}(M),\text{ }%
f\in C^{\infty }(M).
\end{equation*}%
Moreover, the space of sections $\Gamma (L_{{}}^{(\omega ,\eta )})$ is
closed under the extended Courant bracket $\left[ .,.\right] $ if and only
if $\omega =d\eta .$ In this case, the $\mathcal{E}^{1}(M)-$Dirac estructure
associated with a $1-$form $\eta $ will be denoted by $L^{\eta }.$ Then the
following fundamental results holds \cite{iglesias-wade} holds: the
subbundle $L^{\eta }$ corresponds to a contact $1-$form $\eta $ if and only
if 
\begin{equation*}
L^{\eta }\cap \left( (TM\times \{0\})\oplus \left( \{0\}\times \mathbb{R}%
\right) \right)
\end{equation*}%
is a $1-$dimensional sub-bundle of $\mathcal{E}^{1}(M)$ generated by and
element of the form $(\xi ,0)+(0,-1).$

Now we turn to the concept of generalized complex structure. Let $M$ be a
smooth even-dimensional manifold. A generalized almost complex structure on $%
M$ is a subbundle $E\subset \left( TM\oplus T^{\ast }M\right) \otimes 
\mathbb{C}$ which is isotropic and satisfies $\left( TM\oplus T^{\ast
}M\right) \otimes \mathbb{C}=E\oplus \overline{E},$ where $\overline{E}$
denotes the complex conjugate of $E.$ Equivalently, there is a one-to-one
correspondence between generalised almost complex structures and
endomorphisms $\mathcal{J}$ of $TM\oplus T^{\ast }M$ satisfying $\mathcal{J}%
^{2}=-id$ and which are orthogonal with respect to the natural pairing $%
\left\langle .,.\right\rangle .$ The work \cite{iglesias-wade} provides
explicit examples and classical realizations of this notion in the context
of contact and cosymplectic structures. Moreover, several distinguished
subclasses of almost contact structures, such as contact metric, Sasakian,
and $K-$contact structures, are expected to benefit from further insight
provided by Dirac-theoretic methods.

\end{document}